\numberwithin{equation}{section}
\theoremstyle{definition} 
\newtheorem{proposition}{Proposition}[section]
\newtheorem{lemma}[proposition]{Lemma}
\newtheorem{corollary}{Corollary}[section]
\newtheorem{theorem}{Theorem}[section]
\newtheorem{remark}{Remark}[section]
\newtheorem*{theorem*}{Theorem}
\newtheorem*{mquestion*}{Main Question}
\newtheorem*{claim*}{Claim}
\newtheorem*{intuition*}{Intuition}
\newcommand{\vertiiii}[1]{{\left\vert\kern-0.25ex\left\vert\kern-0.25ex\left\vert\kern-0.25ex\left\vert #1 \right\vert\kern-0.25ex\right\vert\kern-0.25ex\right\vert\kern-0.25ex\right\vert}}
\newcommand{\vertiii}[1]{{\left\vert\kern-0.25ex\left\vert\kern-0.25ex\left\vert #1 \right\vert\kern-0.25ex\right\vert\kern-0.25ex\right\vert}}
\newcommand{\Rbb}{\mathbb{R}}
\newcommand{\AND}{{\quad\text{and}\quad}}
\newcommand{\p}[1]{
	\begin{pmatrix}
		#1
	\end{pmatrix}
}
\newcommand{\be}{\begin{equation}}
	\newcommand{\ee}{\end{equation}}
\begin{document}

	 \title[Singularity-free solutions and  scalarization in nonlinear ESGB cosmology]{Proofs on singularity-free solutions and  scalarization in nonlinear Einstein-scalar-Gauss-Bonnet cosmology}
	
	\author{Chihang He, Chao Liu* and Jinhua Wang}

	\address[Chihang He]{School of Mathematics and Statistics, Huazhong University of Science and Technology, Wuhan 430074, Hubei Province, China.}
	\email{hechihang@foxmail.com}

	\address[Chao Liu]{Center for Mathematical Sciences and School of Mathematics and Statistics, Huazhong University of Science and Technology, Wuhan 430074, Hubei Province, China.}
	\email{chao.liu.math@foxmail.com}

	\address[Jinhua Wang]{School of Mathematical Sciences, Xiamen University,
		Xiamen 361005, Fujian Province, China. }
	\email{wangjinhua@xmu.edu.cn}

	\begin{abstract} 
 
  The search for singularity-free cosmological solutions has become a highly active topic in the physics community in recent years, yet existing results are largely numerical or based on asymptotic analysis. To place these developments on a firm mathematical footing, we rigorously establish the global existence and estimates of a class of singularity-free cosmological solutions to the \textit{fully nonlinear Einstein--scalar system} in Einstein-scalar-Gauss-Bonnet gravity with quadratic coupling, providing proofs of previous numerical results in mathematical perspective. We further prove nonlinear scalarization triggered by a Gauss-Bonnet-induced tachyonic instability. Our analysis relies on a novel structural identity, the power identity, which yields decoupled differential inequalities for the Hubble parameter. This framework provides a new method for converting numerical evidence into a mathmatical proof for nonlinear systems.

        \vspace{2mm}

{{\bf Keywords:} Einstein-scalar-Gauss-Bonnet cosmology, FLRW spacetimes, Einstein-scalar system, singularity-free solution, scalarizations,tachyonic instability}
   
	\end{abstract}

	\maketitle

\section{Introduction}
In this paper, we focus on the global existence of solutions to the following \textit{Einstein-scalar system} in Einstein-scalar-Gauss-Bonnet (ESGB) gravity
\begin{equation*}
	 R_{\mu\nu}-\frac{1}{2}Rg_{\mu\nu}+\Gamma_{\mu\nu}=2\partial_{\mu}\phi\partial_{\nu}\phi-(\nabla\phi)^{2}g_{\mu\nu} \quad \text{(Einstein equation)},
\end{equation*}
\begin{equation*}
	\frac{1}{\sqrt{-g}}\partial_{\mu}\left[\sqrt{-g}\partial^{\mu}\phi\right]-\frac{1}{4}f'(\phi) R_{\mathrm{GB}}^{2}=0 \quad \text{(scalar equation)} ,
\end{equation*}
where $g$ is the metric, $R$ and $R_{\mu \nu}$ denote the scalar curvature and Ricci tensor, respectively, $\phi$ is the scalar field, $f$ is the coupling function, and $R_{\mathrm{GB}}^2$ is the Gauss-Bonnet term, defined by
\begin{equation}\label{e:RGB}
	R_{\mathrm{GB}}^2 = R^2 - 4R_{\mu\nu}R^{\mu\nu} + R_{\mu\nu\rho\sigma}R^{\mu\nu\rho\sigma},
\end{equation}
and $\Gamma_{\mu\nu}$ is given by
\begin{equation}\label{e:Gm}
\begin{aligned}
\Gamma_{\mu\nu} = 2R\nabla_{\mu}\nabla_{\nu}f &+ 4(R_{\mu\nu}-\frac{1}{2}Rg_{\mu\nu})\nabla^{\alpha}\nabla_{\alpha}f - 8R_{\alpha(\mu}\nabla^{\alpha}\nabla_{\nu)}f \\
&\quad + 4R^{\alpha\beta}g_{\mu\nu}\nabla_{\alpha}\nabla_{\beta}f - 4R_{ \ \ \mu\alpha\nu}^{\beta}\nabla^{\alpha}\nabla_{\beta}f .
\end{aligned}
\end{equation}

Spacetimes in classical general relativity generically develop singularities, as indicated by the Hawking-Penrose singularity theorems \cite{HawkingPenrose1965,HawkingPenrose1970,Hawking2010}. Such singularities illustrate the breakdown of general relativity and motivate the search for singularity-free solutions \cite{Hawking2005}, a problem that has been investigated extensively \cite{Starobinsky1980,Trodden1993,Brandenberger1993,Ellis2003,Hartle1983}.
Among the proposed extensions of general relativity, the ESGB theory has attracted particular attention due to its nontrivial curvature-scalar coupling and second-order field equations \cite{Zwiebach1985,GrossSloan1987}. Substantial progress has been made in understanding ESGB cosmologies through numerical simulations and perturbative analyses \cite{Antoniadis1994,Rizos1994,Kanti1999,Kanti2015,Kanti2015a,Kawai1999,Sberna2017,Sberna2017a,Hikmawan2016,Kawai1998,Easther1996}. Concurrently, the phenomenon of scalarization, originally discovered by Damour and Esposito-Farèse \cite{Damour1993,Damour1996}, has drawn significant attention in various contexts, including studies of neutron stars, black holes and cosmology within scalar-tensor, Horndeski, and ESGB theories themselves \cite{Doneva2024,Anson2019,Silva2018,Doneva2018,Andreou2019,Bakopoulos2019,Doneva2018a,Antoniou2018,Antoniou2018a,Astefanesei2020,Minamitsuji2019,BlazquezSalcedo2018,Dima2020,Herdeiro2021,Berti2021,ZH.STEFANOV2008,Herdeiro2018,Cardoso2020,Cardoso2013,Myung2018}. For a detailed account of these previous studies, see \S~\ref{sec:related work} for a detailed review. 

However, despite these advances, \textit{to the best of our knowledge, a rigorous mathematical proof is still lacking for both the global dynamics of ESGB cosmologies and the scalarization mechanism within this specific setting.} 

From a mathematical perspective, constructing singularity-free solutions in nonlinear modified gravity models is highly challenging. Due to the strong nonlinearity and complexity of the problem, in the current article, we restrict our analysis to the case of spatially homogeneous and isotropic spacetimes, under which the full system reduces to a coupled nonlinear system of ordinary differential equations (ODE) in $(H, \phi, \dot{\phi})$.  Even as an ODE system, it remains highly nonlinear, with curvature-dependent terms, making it difficult to apply standard methods for proving global existence or for decoupling the evolution of $H$ from that of the scalar field.

\textbf{Main contributions.}
The present work provides a rigorous analytical treatment of the \textit{nonlinear Einstein-scalar system in the ESGB gravity} in mathematical perspective. Under a quadratic Gauss-Bonnet coupling and flat spatial geometry (see the companion paper \cite{he2025} for the exponential coupling case), we establish global existence, obtain sharp and explicit bounds for all cosmological variables, and mathematically provide an analysis of a nonlinear  scalarization \footnote{The term ``scalarization'' originates in the context of compact objects such as black holes and neutron stars. It has since been extended to cosmology, where it refers to the development of nontrivial scalar field configurations triggered by a tachyonic instability (see \cite{Doneva2024} and \cite[\S VI]{Anson2019}). In this work, we adopt this latter notion of scalarization in the cosmological background.} in this setting. Our contributions can be summarized as follows:

\begin{enumerate}[leftmargin=*]
    \item \textbf{New analytic framework for the Einstein-scalar system.} To overcome the mathematical difficulties posed by the nonlinear coupling, we find a special structure of the Einstein-scalar system referred to as the \emph{power identity}, obtained by combining the Friedmann constraint with the scalar field equation. This identity plays a role of a nonlinear conserved relation and yields decoupled differential inequalities for the Hubble parameter. \textit{Notably, the power identity which plays a central role in our analysis is not limited to the ESGB model. As a methodological approach, it potentially offers a broader framework (see Remark \ref{R:pi}) for developing analytical proof techniques for a wide class of differential equations.}

    \item \textbf{Rigorous construction of singularity-free ESGB cosmologies.}
    For an explicit family of initial data, we obtain a unique global homogeneous and isotropic solution, and derive upper and lower bounds for $H$, $\phi$, and $\dot{\phi}$. These bounds quantitatively match the numerically observed behaviors reported in \cite{Rizos1994, Sberna2017a}, providing mathematically support for these long-standing numerical predictions.  

    \item \textbf{Rigorous nonlinear proof of scalarization under ESGB setting.}
    For an enlarged class of initial data, we show that the scalar field necessarily undergoes curvature-induced tachyonic growth, resulting in a future global spacetime with a nontrivial self-increasing scalar profile. Our analysis yields explicit growth estimates and identifies the precise dynamical mechanism through which scalarization emerges.
\end{enumerate}

\subsection{Main Theorem}
\label{main theorem}
Let us begin with a brief review of the fundamentals of ESGB gravity. 
We consider a string-inspired gravitational theory in which a scalar field $\phi$ is coupled to gravity through a coupling function $f(\phi)$ multiplying the quadratic Gauss-Bonnet term, in addition to the Ricci scalar. The theory is defined by the following action (see, e.g., \cite{Rizos1994,Kanti1999,Kanti2015,Kanti2015a,Kawai1999,Sberna2017a,Hikmawan2016})
\begin{equation}\label{e:S}
	S_{\text{ESGB}} = \frac{1}{16\pi} \int d^4x \sqrt{-g} \left( R - 2\partial_\mu\phi\partial^\mu\phi - V_\phi - \lambda f(\phi) R^2_{\mathrm{GB}} \right),
\end{equation}
where  $V_\phi$ is the scalar field potential and $\lambda$ represents the coupling constant

In this article, following the cosmological framework adopted in \cite{Rizos1994,Kanti1999,Kanti2015,Kanti2015a,Kawai1999,Sberna2017a,Hikmawan2016}, we consider a free scalar field, a positive coupling constant, and a quadratic coupling function\footnote{See the companion article \cite{he2025} for the case of exponential coupling, which involves additional difficulties. }, specified by
\begin{equation}\label{e:para}
	V_\phi=0,\quad \lambda=1 \AND f(\phi) = \frac{1}{2}\phi^2.
\end{equation}

Taking the variation of this action \eqref{e:S} with respect to both the metric tensor and the scalar field yields the corresponding \textit{Einstein-scalar field equations} in ESGB theory, 
\begin{equation}
    R_{\mu\nu}-\frac{1}{2}Rg_{\mu\nu}+\Gamma_{\mu\nu}=2\partial_{\mu}\phi\partial_{\nu}\phi-(\nabla\phi)^{2}g_{\mu\nu},
    \label{eq:2.4++++++}
\end{equation}
and
\begin{equation}
    \frac{1}{\sqrt{-g}}\partial_{\mu}\left[\sqrt{-g}\partial^{\mu}\phi\right]-\frac{1}{4}\phi R_{\mathrm{GB}}^{2}=0,
    \label{eq:2.3++++++}
\end{equation}
where $R^2_{\mathrm{GB}}$ and $\Gamma_{\mu\nu}$ are defined by \eqref{e:RGB} and \eqref{e:Gm}. 

We note that the system \eqref{eq:2.4++++++}–\eqref{eq:2.3++++++}, or equivalently the action \eqref{e:S}–\eqref{e:para}, is invariant under the discrete transformation $\phi \rightarrow -\phi$, reflecting a $\mathbb{Z}_2$ symmetry (see, e.g.,  \cite[Chap.~4.4]{Sberna2017a}).

In addition, we assume a homogeneous and isotropic universe described by the Friedmann-Lemaître-Robertson-Walker (FLRW) metric, 
\begin{equation}\label{e:FLRW}
	g(t) = -dt^2 + a^2(t) \left(  dr^2 + r^2(d\theta^2 + \sin^2\theta \, d\varphi^2) \right),
    \end{equation}
  where $a(t):=a_0 e^{\int^t_0H(s)ds}$ is the scale factor, $H=\dot{a}/a$ denotes the Hubble parameter, and the dot represents a derivative with respect to time.

For the FLRW metric \eqref{e:FLRW}, the Einstein-scalar field equations \eqref{eq:2.4++++++}--\eqref{eq:2.3++++++} reduce to the following explicit form (see \S\ref{sec:2} for details)
\begin{align}
	&3H^2 = \dot{\phi}^2 + 12H^3 \phi \dot{\phi}, \quad (\text{Friedmann equation; Hamiltonian constraint}),
	\label{eq:2.5} \\ 
	& 2\dot{H}    + 3 H^2 = -\dot{\phi}^2 + 8(\dot{H} + H^2) H \phi \dot{\phi} + 4H^2 \big(\dot{\phi}^2 + \phi \ddot{\phi} \big),
	\label{eq:2.4} \\
	&\ddot{\phi} + 3\dot{\phi}H + 6\phi(H^2 + \dot{H})H^2   = 0, \quad (\text{scalar field
     equation}).
	\label{eq:2.6}
\end{align} 
This article focuses on the analysis of this nonlinear ODE system.

\begin{theorem}[Global Existence and Bounds] \label{theorem:2.1}
Suppose the initial data
\begin{equation}\label{initial-data}
	(a_0,\beta,\alpha) := (a,H,\phi)|_{t=0} 
\end{equation}
satisfy
	\begin{gather}
		\alpha = 0, \quad a_0>0, \label{eq:2.12} \\
		\beta  \in \biggl(0, \frac{\sqrt{3}}{3}\biggr).  \label{eq:2.13}
	\end{gather} 
	Then, by imposing the symmetry-breaking initial condition\footnote{As shown in \S \ref{sec:2}, the genuinely free data for the system \eqref{eq:2.5}--\eqref{eq:2.6} are exactly given by \eqref{initial-data}.  In particular,  due to the constraint \eqref{eq:2.5}, $\dot \phi(0)$ is completely determined by $\alpha$ and $\beta$ up to a sign: $\dot\phi(0)=-6\alpha \beta^3 \pm  \sqrt{\left(6\alpha \beta^3\right)^2 + 3 \beta^2}$.  If we write $\dot{\phi}(0) = -6\alpha \beta^3 + s \sqrt{\left(6\alpha \beta^3\right)^2 + 3 \beta^2}$ with $s = \pm 1$, then the system \eqref{eq:2.4++++++}–\eqref{eq:2.3++++++}, or equivalently the action \eqref{e:S}–\eqref{e:para}, admits a symmetry under simultaneous sign reversal of both $s$ and $\phi$. This symmetry has been discussed in several references; see, e.g., \cite[Chap.~4.4]{Sberna2017a}. In the present theorem, since $\alpha = 0$, imposing the condition \eqref{eq:2.11} explicitly breaks this symmetry.} 
    \begin{equation}
		\dot{\phi}(0) > 0 , \label{eq:2.11}
    \end{equation} 
 there exists a unique \textit{globally singularity-free,  homogeneous, and isotropic  FLRW solution} $(g,\phi)\in C^2( (-\infty, +\infty))$,  where $g$ is defined by \eqref{e:FLRW}, solving the ESGB field equations  \eqref{eq:2.4++++++}--\eqref{eq:2.3++++++} with initial data \eqref{initial-data}.  
    Moreover, this FLRW solution, described by $\phi$ and $H$, satisfies the following estimates
	\begin{enumerate}[leftmargin=*]
	\item (Scalar field bounds, see Fig. \ref{fig:bound3})  The scalar field $\phi$ satisfies
	\begin{enumerate}[leftmargin=*]
		\item For $t \in (-\infty,0)$, 
			\begin{equation*} 
			\frac{\sqrt{3}}{12}(1-e^{-12t})  < \phi(t) < \mathfrak{H}(t) , 
			\label{eq:2.15}
		\end{equation*}
        where $\mathfrak{H}(t)$ is defined by
        \begin{equation*}
            \mathfrak{H}(t):=\begin{cases}
			\displaystyle	\frac{\sqrt{3}}{6\beta^2}(1-e^{-6\beta^3t}) , \quad &\text{if }0< \beta \leq \sqrt{\frac{5}{27}} , \\
			\displaystyle	\frac{25\sqrt{3}}{24}(1-e^{-\frac{48}{125}t}) , \quad &\text{if }\sqrt{\frac{5}{27}}< \beta < \frac{\sqrt{3}}{3}  .
		\end{cases} 
        \end{equation*}
		\item For $t\in (0,+\infty)$, 	
        \begin{equation*}
			\frac{-1+\sqrt{1+\frac{24\beta^2}{5}\ln{(1+5\beta t)}}}{4\sqrt{3}\beta^2} < \phi(t) < \sqrt{3} \ln(\beta t+1).  
			\label{eq:2.13++}
		\end{equation*} 
	\end{enumerate}
	\item (Hubble parameter bounds, see Fig. \ref{fig:bound1} and \ref{fig:bound2})
The Hubble parameter $H$  satisfies the following bounds. 
	\begin{enumerate}[leftmargin=*]
	\item For $t \in (-\infty,0)$, 	
    \begin{equation*}
		\mathfrak{L}(t) <H(t) <\frac{2 \beta +\sqrt{2}-\left(\sqrt{2}-2 \beta \right) e^{3 \sqrt{2} t}}{\sqrt{2} \left(2 \beta  +\sqrt{2}+\left(\sqrt{2}-2 \beta \right) e^{3 \sqrt{2} t}\right)},  
		\label{eq:2.16}
	\end{equation*}
    where $\mathfrak{L}(t)$ is defined by
\begin{equation*}
    \mathfrak{L}(t):=\begin{cases}
		\frac{227 \beta }{2 e^{\frac{454 \beta  t}{45}}+225} >\beta , \quad &\text{if }0< \beta \leq \sqrt{\frac{5}{27}} , \\
		\frac{2 \beta }{5 \beta - (5 \beta -2) e^{4 t} } >\frac{2}{5} , \quad &\text{if }\sqrt{\frac{5}{27}}< \beta <\frac{\sqrt{3}}{3}  .
	\end{cases}
\end{equation*}
	\item For $ t \in (0,+\infty)$, 
	\begin{equation*}
	\frac{1}{5t+\frac{1}{\beta}} < H(t) < \frac{1}{t+\frac{1}{\beta}}.
	\label{eq:2.16++}
\end{equation*}

	\end{enumerate}
    \item (Bounds on $\dot{\phi}$)
    The time derivative of the scalar field $\dot{\phi}$ satisfies  
    \begin{enumerate}[leftmargin=*]
    \item For $t \in (-\infty,0)$,
\begin{equation*}
    \mathfrak{W}(t)<\dot{\phi}<\sqrt{3}e^{-12t},\quad \text{for all } t\in (\mathcal{T}_-,0),
\end{equation*}
where $\mathfrak{W}(t)$ is defined by \begin{equation*}
            \mathfrak{W}(t):=\begin{cases}
			\displaystyle	\sqrt{3}\beta e^{-6\beta^3t} , \quad &\text{if }0< \beta \leq \sqrt{\frac{5}{27}} , \\
			\displaystyle	\frac{2\sqrt{3}}{5}e^{-\frac{48}{125}t} , \quad &\text{if }\sqrt{\frac{5}{27}}< \beta < \frac{\sqrt{3}}{3}  .
		\end{cases} 
        \end{equation*}
	\item For $ t \in (0,+\infty)$, 
\begin{equation*}
   \frac{1}{\sqrt{1+12\beta^2\ln{(\beta t+1)}}}\frac{\sqrt{3}}{5t+\frac{1}{\beta}}<\dot{\phi}<\frac{\sqrt{3}}{t+\frac{1}{\beta}}. 
\end{equation*}
\end{enumerate} 
\item (Scalar factor bounds) The scalar factor $a$ satisfies the following bounds
\begin{enumerate}
    \item For $t \in (-\infty,0)$,
    \begin{equation*}
a_0 \left( \frac{2}{(\sqrt{2}\beta + 1) + (1 - \sqrt{2}\beta) e^{3\sqrt{2} t}} \right)^{\frac{1}{3}} e^ { \frac{\sqrt{2}}{2}t }<a(t)< a_0 \mathfrak{G}(t),
    \end{equation*}
    where $\mathfrak{G}(t)$ is defined by
    \begin{equation*}
        \mathfrak{G}(t)= \begin{cases} 
\left( \dfrac{227}{225 e^ { -\frac{454 \beta t}{45}}  + 2} \right)^{\frac{1}{10}},  \quad &\text{if }0< \beta \leq \sqrt{\frac{5}{27}}, \\
          \biggl( \dfrac{2}{5\beta e^{-4t}-(5\beta-2)}\biggr)^{\frac{1}{10}}
		    , \quad &\text{if }\sqrt{\frac{5}{27}}< \beta <\frac{\sqrt{3}}{3} .
\end{cases}
    \end{equation*}
    \item For $t \in (0,+\infty)$,
\begin{equation*}
    a_0 \left( 5\beta t + 1 \right)^{\frac{1}{5}}<a(t)< a_0 (\beta t + 1).
\end{equation*}

\end{enumerate} 
	\end{enumerate}  
\end{theorem}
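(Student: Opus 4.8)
The plan is to eliminate the constraint \eqref{eq:2.5} and reduce the full system \eqref{eq:2.5}--\eqref{eq:2.6} to a closed planar autonomous system, and then to exploit the power identity to turn the evolution of $H$ into Riccati-type comparisons that can be integrated in closed form. Reading \eqref{eq:2.5} as a quadratic in $\dot\phi$ and selecting the root singled out by the symmetry-breaking condition \eqref{eq:2.11} gives, with the abbreviation $\sigma:=2\sqrt3\,H^2\phi$,
\[
\dot\phi=\sqrt3\,H\bigl(\sqrt{1+\sigma^2}-\sigma\bigr),
\]
so that $\dot\phi$ is a smooth function of $(H,\phi)$ whenever $H>0$. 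Substituting the scalar equation \eqref{eq:2.6} into \eqref{eq:2.4} removes $\ddot\phi$ and leaves a relation that is \emph{linear} in $\dot H$; solving for $\dot H$ and simplifying with \eqref{eq:2.5} yields the power identity, which expresses $\dot H$ as a rational function of $H$ and $\sigma$ whose denominator I first check to be strictly positive, so that $\dot H$ is well defined and the reduced right-hand sides are smooth. Together with the induced equation for $\dot\sigma$ this produces a closed planar system in $(H,\sigma)$. At $t=0$ one has $\phi=0$, hence $\sigma=0$, and the power identity gives $\dot H(0)=-3\beta^2(1-2\beta^2)$, so that $c:=-\dot H/H^2=3(1-2\beta^2)\in(1,3)$ by \eqref{eq:2.13}; this is the first sign that the coupling coefficient $c$ sits inside the window $[1,5]$ that will drive the estimates.

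The core of the argument is to promote this pointwise observation to the \textbf{decoupled differential inequalities for $H$}. Using the power identity one bounds $c=-\dot H/H^2$ from above and below by constants, provided $\sigma$ is confined to an explicit range and $H$ is bounded; this is exactly a continuity (bootstrap) argument, in which bounds on $H$ and on $\sigma$ are assumed a priori, used to control the power identity, and then recovered strictly. For $t>0$ the resulting inequalities take the Riccati form $-5H^2\le\dot H\le-H^2$, whose comparison solutions are $1/(t+1/\beta)$ and $1/(5t+1/\beta)$, giving the stated envelope for $H$; for $t<0$ the admissible comparisons are instead Riccati equations such as $\dot H=3H^2-\tfrac32$ (with equilibria $\pm1/\sqrt2$) together with a logistic-type equation, whose integrals produce the $\tanh$-type upper envelope and the function $\mathfrak L(t)$. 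The case split at $\beta=\sqrt{5/27}$ enters precisely here, because the sharpest comparison equation available for the lower envelope changes form at that threshold; this is the origin of the piecewise definitions of $\mathfrak L,\mathfrak H,\mathfrak W$ and $\mathfrak G$. Crucially, each of these envelopes keeps $H$ strictly positive and uniformly bounded on every finite time interval.

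With $H$ controlled, the remaining bounds follow by feeding the $H$-envelopes into the constraint and integrating. Since the factor $\sqrt{1+\sigma^2}-\sigma$ is positive and strictly decreasing in $\sigma$, the identity $\dot\phi=\sqrt3\,H(\sqrt{1+\sigma^2}-\sigma)$ converts the two-sided bounds on $H$ and $\sigma$ into the stated two-sided bounds on $\dot\phi$; integrating $\dot\phi$ then yields the bounds on $\phi$, and integrating $H=\dot a/a$ through $a=a_0\exp(\int_0^tH)$ yields the bounds on the scale factor. This step simultaneously closes the bootstrap: the bounds on $\phi$ and $H$ together pin $\sigma=2\sqrt3\,H^2\phi$ inside the range assumed above, so the a priori hypotheses used to derive the differential inequalities are recovered with room to spare, and the continuity argument closes on a maximal interval of existence.

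Finally, global existence, uniqueness and regularity follow from this control. Local existence and uniqueness of a $C^2$ solution come from Picard--Lindel\"of applied to the reduced planar system, whose right-hand sides are smooth wherever $H>0$ and the power-identity denominator is nonzero, both guaranteed on the bootstrap region. Because the envelopes bound $H$ away from $0$ and $\infty$ on every bounded interval and extend to all $t\in\mathbb R$, no finite-time blow-up or degeneration can occur, so the local solution continues to a global solution on $(-\infty,\infty)$; positivity and finiteness of $H$, hence of $\dot H$ and of the curvature invariants built from them, yield the singularity-free property, and $\phi,H\in C^2$ follows by differentiating the equations. I expect the \textbf{main obstacle} to be closing the bootstrap uniformly in both time directions and both parameter regimes: one must at once prevent the power-identity denominator from vanishing and confine the coupling coefficient $c$ to $[1,5]$, since a priori $\sigma$ could drift out of its admissible range and destroy the decoupled inequalities. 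Arranging the mutual control of $H$ and $\sigma$ so that the Riccati and logistic comparisons and the constraint reinforce one another --- rather than being merely postulated --- is the crux on which the whole scheme rests.
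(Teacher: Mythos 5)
Your proposal reproduces the paper's overall architecture --- solve the constraint for $\dot\phi$ on the branch selected by \eqref{eq:2.11}, check positivity of the denominator $2-8H\phi\dot\phi+24H^4\phi^2$, derive decoupled Riccati-type differential inequalities for $H$, integrate them by comparison, and then run the hierarchy $H\rightarrow\phi\rightarrow\dot\phi,a\rightarrow$ continuation --- and your pointwise computations (the branch formula $\dot\phi=\sqrt3\,H\bigl(\sqrt{1+\sigma^2}-\sigma\bigr)$ and $\dot H(0)=-3\beta^2(1-2\beta^2)$) are correct. But there is a genuine gap exactly at the step you yourself flag as the ``main obstacle'': you never exhibit how the power identity produces the inequalities $-5H^2<\dot H<-H^2$ for $t>0$, nor their backward-time analogues. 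Asserting that ``one bounds $c=-\dot H/H^2$ from above and below by constants, provided $\sigma$ is confined to an explicit range'' is a restatement of what must be proved; the entire analytic content of the theorem lives in that verification. The paper carries it out by a first-touching-time contradiction: if an auxiliary quantity $B_\ell$ (e.g.\ $\dot H+5H^2$, $\dot H+H^2$, $\dot H-6H^4-6H^2$, $\dot H-3H^2+\tfrac32$, or $B_5$ in \eqref{e:B5.a}) first vanishes, then substituting $B_\ell=0$ into $\mathcal P=0$ (Lemma~\ref{proposition:3.2++}) yields, after completing squares and using $H>0$, $\dot\phi\neq0$ and the sign of $\phi$, a strictly sign-definite quadratic form in $(\dot\phi,\phi)$, a contradiction (Lemmas~\ref{lemma:3.3}, \ref{lemma:3.7++}, \ref{lemma:3.5b}, \ref{lemma:3.10b}, \ref{t:HlbdB5.b}). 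Nothing in your write-up replaces this analysis.

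Worse, the specific mechanism you propose cannot close in the backward time direction. On $(\mathcal{T}_-,0)$ the solution has $\phi\to-\infty$ at an exponential rate while $H$ stays trapped between (roughly) $\min\{\beta,\tfrac25\}$ and $\tfrac{1}{\sqrt2}$, so $\sigma=2\sqrt3\,H^2\phi\to-\infty$: no bootstrap hypothesis confining $\sigma$ to a bounded range can survive there, and it is precisely the fact that the paper's quadratic forms are sign-definite for \emph{all} values of $\phi$ that makes the backward estimates possible. The window $c\in[1,5]$ is likewise a forward-time phenomenon only. Writing $\dot H$ via \eqref{eq-dotH} as $H^2N(\sigma,H)/D(\sigma)$ with $u=\sqrt{1+\sigma^2}$, one finds $D=2+6\sigma^2-4\sigma u$ and $N=-D-4+12H^2\bigl(1+2\sigma^2-2\sigma u\bigr)$, so that as $\sigma\to-\infty$ with $H$ bounded, $\dot H/H^2\to\tfrac{1}{10}\bigl(48H^2-10\bigr)$, i.e.\ $c\to 1-\tfrac{24}{5}H^2$, which is below $\tfrac14$ once $H\geq\tfrac25$ and can even be negative (so $\dot H>0$ is possible for $t<0$, consistent with the paper's backward bounds $3H^2-\tfrac32<\dot H<6H^4+6H^2$ but fatal to any scheme built on $c\in[1,5]$). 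To repair the proposal you would either have to perform the paper's sign-definiteness analysis at the touching times, or prove each differential inequality by direct algebra on the rational function $N/D$ uniformly over the unbounded $\sigma$-region actually visited by the solution; as written, neither is done.
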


\begin{figure}[!htbp]
    \centering
    \begin{subfigure}[b]{0.45\textwidth}
        \includegraphics[width=\linewidth]{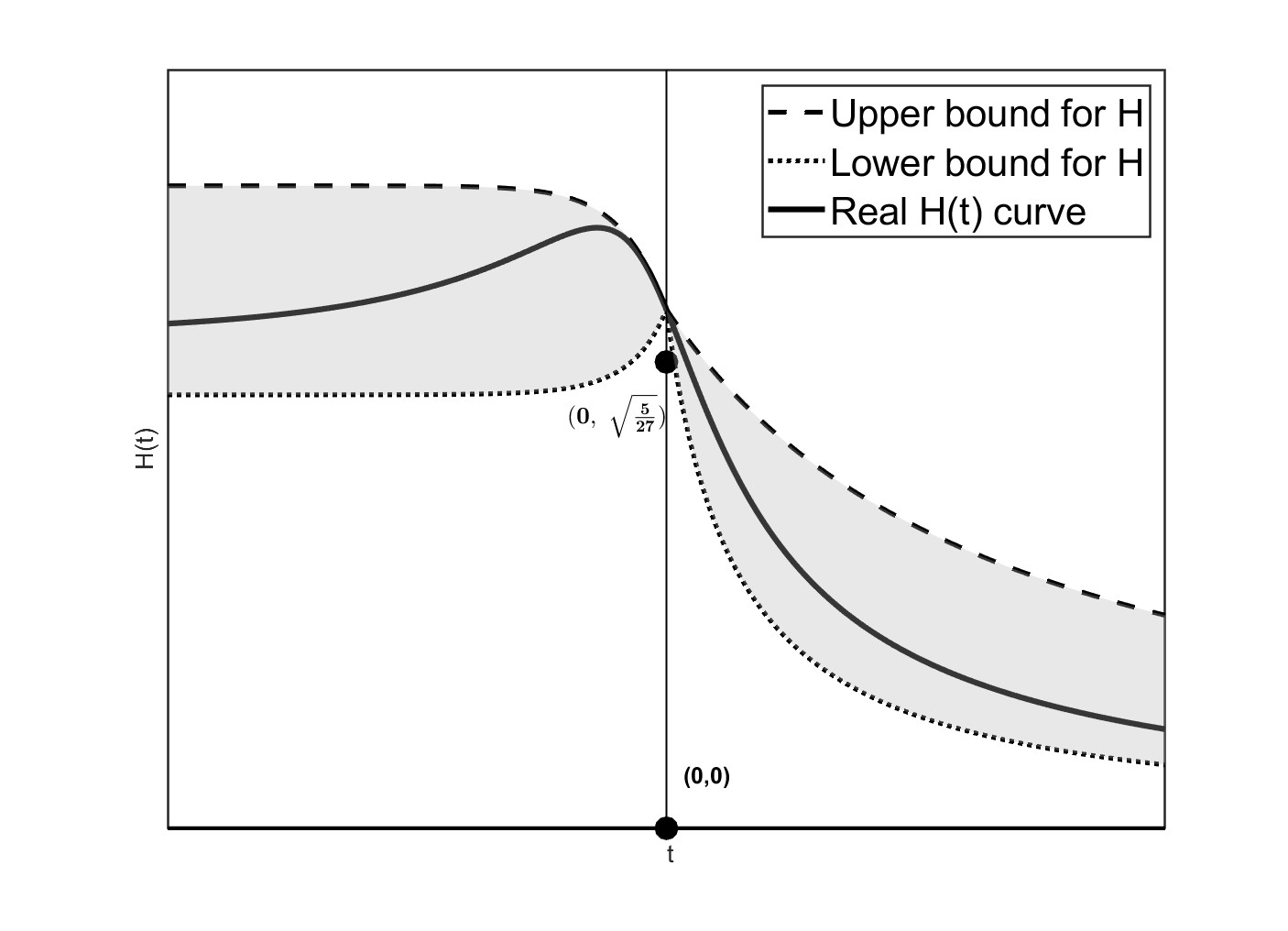}
        \caption{Bounds for $H$ ($\beta> \sqrt{5/27}$)} \label{fig:bound1}
    \end{subfigure}
    \begin{subfigure}[b]{0.45\textwidth}
        \includegraphics[width=\linewidth]{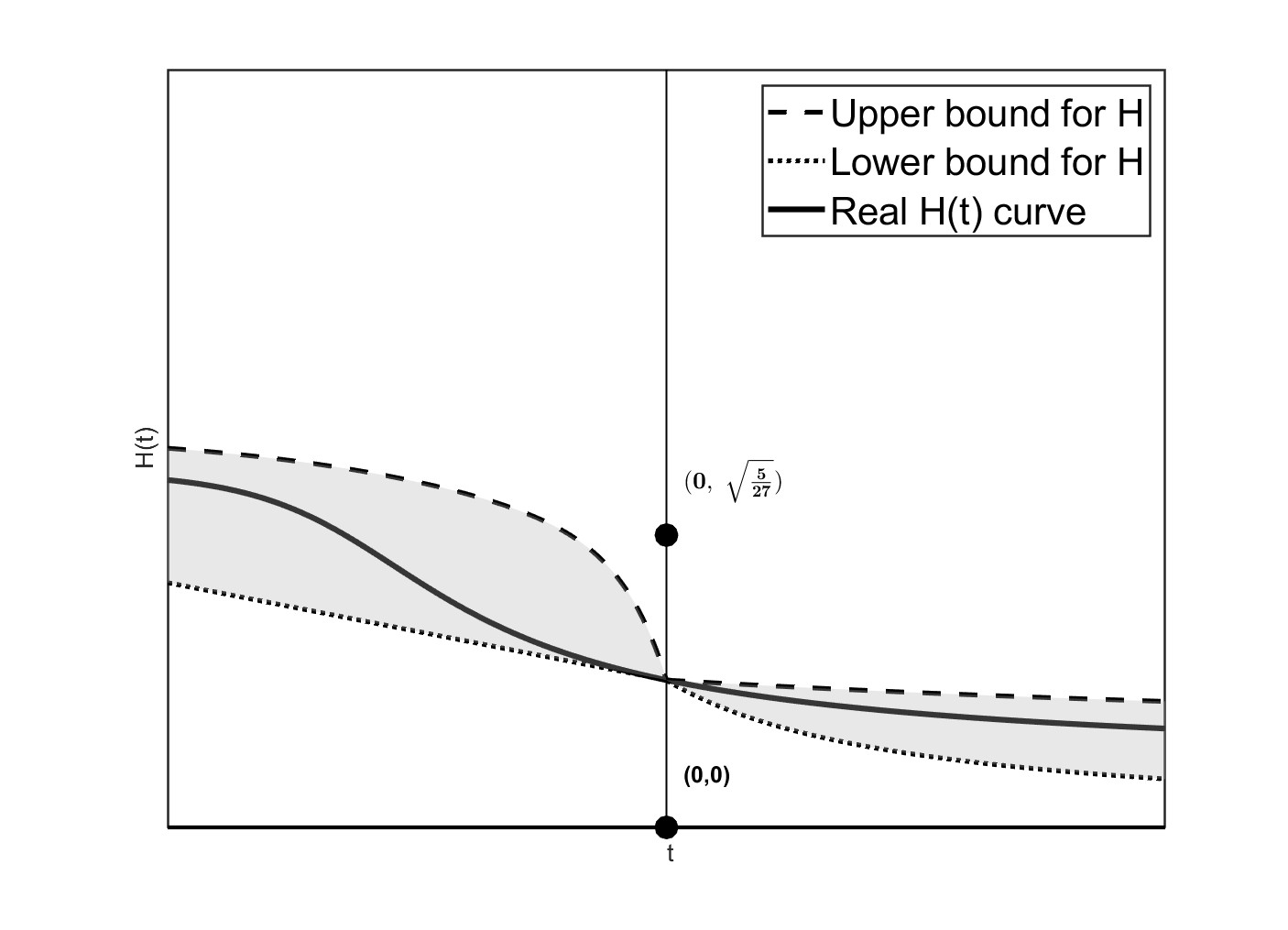}
        \caption{Bounds for $H$  ($\beta\leq \sqrt{5/27}$)}   \label{fig:bound2}
    \end{subfigure}
    \\
   \centering  
    \begin{subfigure}[b]{0.45\textwidth}
        \includegraphics[width=\linewidth]{boundsfor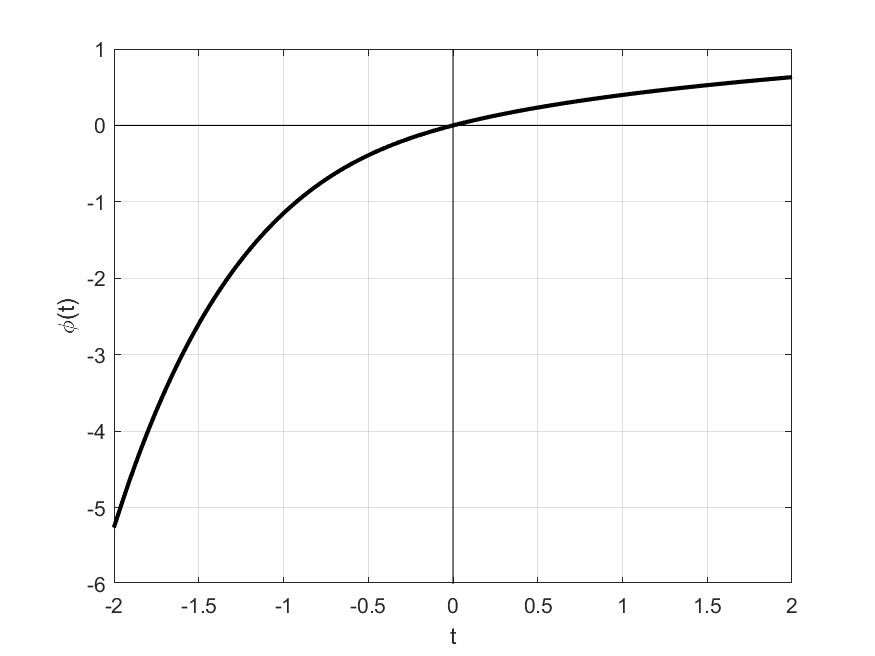}
        \caption{Bounds for $\phi$}     \label{fig:bound3}
    \end{subfigure}
    \caption{Bounds for $H$ and $\phi$}
    \label{fig:bound}
\end{figure}

In fact, if we restrict our attention to the future time direction ($t > 0$) and consider a much broader class of initial data, the above theorem still holds, with only slight modifications to the estimates for $\phi$. This is formalized in the following theorem, which also implies nonlinear scalarization triggered by a tachyonic instability. As such, it can be viewed as a toy model for scalarization in ESGB cosmology.
To describe this broader class of initial data, we introduce the following quantities
\begin{equation}
	\kappa:=\kappa(\alpha,\beta)=\frac{-4 \beta^3 \alpha \gamma 
		- \gamma^2 
		+ 4 \beta^2 \gamma^2 
		- 24 \beta^6 \alpha^2 \
		- 3 \beta^2}{2 - 8 \beta \alpha \gamma + 24 \beta^4 \alpha^2}, 
	\label{eq:2.10++++}
\end{equation}
where
\begin{equation}
	\gamma:=- 6 \alpha  \beta^3 + \sqrt{\left( 6\alpha \beta^3 \right)^2 + 3 \beta^2}. 
	\label{eq:2.9++++}
\end{equation}

\begin{theorem}[Scalarization Theorem]
	\label{theorem:1.2} 
	Suppose the initial data $(a_0,\beta,\alpha)$, defined by \eqref{initial-data}, lie in the following admissible data set
	\begin{equation}
			\mathcal{A}:= \left\{(a_0,\beta,\alpha) \in  (0,+\infty) \times \left(0, \tfrac{\sqrt{3}}{3}\right) \times [0,+\infty)\;\bigg|\;-5\beta^2 < \kappa(\alpha,\beta) < -\beta^2 \right\}, 
		\label{eq:1.13++++}
	\end{equation} 
	and satisfies the condition
	\begin{equation}
		\dot\phi(0) > 0.
		\label{1.14++++}
	\end{equation}
	Then there exists a unique \textit{global,  homogeneous, and isotropic  FLRW solution} $(g, \phi) \in C^2([0, +\infty))$,  where $g$ is given by \eqref{e:FLRW}, that solves the ESGB field equations \eqref{eq:2.4++++++}--\eqref{eq:2.3++++++}. This solution exhibits scalarization in the sense that the nontrivial scalar field $\phi$ grows as
	\begin{equation*}
		\frac{-1+\sqrt{1+\frac{48\beta^2}{5}\ln(1+5\beta t)}}{4\sqrt{3}\beta^2} < \phi(t) < \sqrt{3} \ln(\beta t+1)+\alpha,
	\end{equation*}
	 the Hubble parameter $H$ satisfies the following bounds:
	\begin{equation*}
		\frac{1}{5t+\frac{1}{\beta}} < H(t) < \frac{1}{t+\frac{1}{\beta}},
	\end{equation*} 
    and the time derivative of the scalar field $\dot{\phi}$ satisfies  
    \begin{equation*}
    \frac{1}{\sqrt{1+12\beta^2\bigl(\ln{(\beta t+1)+\alpha\bigr)}}}\frac{\sqrt{3}}{5t+\frac{1}{\beta}}<\dot{\phi}<\frac{\sqrt{3}}{t+\frac{1}{\beta}}  ,
\end{equation*} 
	for all $t \in (0, +\infty)$.
\end{theorem}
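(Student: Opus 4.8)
The plan is to convert the constrained system \eqref{eq:2.5}--\eqref{eq:2.6} into a closed first-order flow for $(H,\phi)$ and to distill from it two \emph{decoupled} differential inequalities for $H$ alone. First I would treat the Hamiltonian constraint \eqref{eq:2.5} as a quadratic in $\dot\phi$ and select the root singled out by \eqref{1.14++++}, namely
\[
\dot\phi=\frac{\sqrt3\,H}{2\sqrt3\,H^{2}\phi+\sqrt{1+12H^{4}\phi^{2}}},
\]
which is a smooth, strictly positive function of $(H,\phi)$ for $H>0$, $\phi\ge0$, and reduces to $\dot\phi(0)=\gamma$ of \eqref{eq:2.9++++} at $t=0$. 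Eliminating $\ddot\phi$ between \eqref{eq:2.4} and \eqref{eq:2.6} and using the constraint to remove $\phi$ then yields the \emph{power identity}
\[
\dot H=H^{2}\,\frac{-9H^{4}-18H^{2}\dot\phi^{2}-5\dot\phi^{4}+24H^{2}\dot\phi^{4}}{9H^{4}-6H^{2}\dot\phi^{2}+5\dot\phi^{4}},
\]
whose value at $t=0$ is exactly $\kappa$ in \eqref{eq:2.10++++}, so $\dot H(0)=\kappa$. I would record that the denominator $N:=9H^{4}-6H^{2}\dot\phi^{2}+5\dot\phi^{4}$ is a positive-definite quadratic form in $(H^{2},\dot\phi^{2})$, hence $N>0$; this nondegeneracy keeps the reduced flow smooth and later yields global existence.

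From the power identity I would extract the two structural consequences
\[
\dot H+H^{2}=\frac{24H^{4}\dot\phi^{2}(\dot\phi^{2}-1)}{N},\qquad \dot H+5H^{2}=\frac{4H^{2}\bigl(9H^{4}-12H^{2}\dot\phi^{2}+5\dot\phi^{4}\bigr)+24H^{4}\dot\phi^{4}}{N}.
\]
The numerator of the second is a positive-definite form plus a nonnegative term, so $\dot H>-5H^{2}$ holds \emph{unconditionally}; this gives the lower inequality for free. The first identity shows $\dot H<-H^{2}\iff\dot\phi<1$. To secure this I would observe from the constraint formula that $\dot\phi<\sqrt3\,H$ whenever $\phi\ge0$, and that $\phi\ge\alpha\ge0$ is preserved because $\dot\phi>0$; hence on any interval where $H<1/\sqrt3$ we get $\dot\phi<1$, so $\dot H<-H^{2}<0$ and $H$ strictly decreases. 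Since $H(0)=\beta<\sqrt3/3$, a continuity/bootstrap argument propagates $H<1/\sqrt3$ to the entire existence interval. The admissibility conditions $-5\beta^{2}<\kappa<-\beta^{2}$ defining $\mathcal{A}$ in \eqref{eq:1.13++++} are precisely the inequalities $-5H^{2}<\dot H<-H^{2}$ evaluated at $t=0$, which the identities above render consistent with $\dot\phi(0)=\gamma\in(0,\sqrt3\,\beta]$ and thereby initialize the bootstrap.

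With both inequalities in hand I would integrate them in sharp form by rewriting them as $\tfrac{d}{dt}(1/H)=-\dot H/H^{2}\in(1,5)$; integrating from $0$ with $H(0)=\beta$ gives $t+\tfrac1\beta<1/H<5t+\tfrac1\beta$, i.e. exactly the stated Hubble bounds. Feeding $H<1/(t+\tfrac1\beta)$ into $\dot\phi<\sqrt3\,H$ and integrating yields $\dot\phi<\sqrt3/(t+\tfrac1\beta)$ and, since $\phi(0)=\alpha$, the upper bound $\phi<\alpha+\sqrt3\ln(\beta t+1)$. Global existence and uniqueness on $[0,+\infty)$ then follow from these a priori bounds together with $N>0$: the reduced vector field is locally Lipschitz, the solution cannot leave $\{0<H<\beta,\ \phi\ge\alpha\}$ nor blow up in finite time, and the original $(g,\phi)$ is recovered through $a=a_0\exp\int_0^tH$.

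The delicate part—and the step I expect to be the main obstacle—is the matching \emph{lower} bounds on $\dot\phi$ and $\phi$. Here I would return to the constraint formula and, writing $m=2\sqrt3\,H^{2}\phi$, use $m+\sqrt{1+m^{2}}\le 1+2m$ together with the established bounds $H\le\beta$, $H\ge 1/(5t+\tfrac1\beta)$, and $\phi\le\alpha+\sqrt3\ln(\beta t+1)$ to produce a Bernoulli-type differential inequality of the form $\dot\phi\ge \sqrt3\,H/(1+4\sqrt3\,\beta^{2}\phi)$. This linearizes under the substitution tracking $\phi+2\sqrt3\,\beta^{2}\phi^{2}$; integrating from $\phi(0)=\alpha$ and solving the resulting quadratic produces the stated lower bound for $\phi$, and substituting back into the constraint formula gives the lower bound for $\dot\phi$. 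The real difficulty is the bookkeeping: choosing the coarsenings so that the $\alpha$-dependent constants collapse into the claimed closed forms (e.g. the coefficient $\tfrac{48\beta^{2}}{5}$ inside the radical) while keeping every estimate consistent with the bootstrap so that no circularity is introduced. Finally, the unbounded growth $\phi(t)\to\infty$ delivered by this lower bound is exactly the scalarization assertion—a nontrivial scalar profile emerging and growing logarithmically—which I would interpret as the nonlinear tachyonic instability advertised in the statement.
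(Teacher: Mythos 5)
Your core reduction is correct---I checked the algebra---and it is genuinely different from the paper's argument. Where the paper keeps $\phi$ and works from the power identity of Lemma~\ref{proposition:3.2++}, proving $B_1=\dot H+5H^2>0$ and $B_3=\dot H+H^2<0$ by contradiction (Lemmas~\ref{lemma:3.2}--\ref{t:dataB} and \ref{lemma:3.5a}--\ref{t:dH+H2}), you eliminate $\phi$ from \eqref{eq-dotH} via the constraint, $\phi=(3H^2-\dot\phi^2)/(12H^3\dot\phi)$, and obtain $\dot H$ as a rational function of $(H,\dot\phi)$ whose denominator $9H^4-6H^2\dot\phi^2+5\dot\phi^4$ and the numerator of $\dot H+5H^2$ are positive-definite quadratic forms in $(H^2,\dot\phi^2)$ (discriminants $-144$ and $-36$). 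This buys something real: $\dot H+5H^2>0$ becomes unconditional, and $\dot H+H^2<0$ reduces to $\dot\phi<1$, which you get from $\phi\ge 0$, $\dot\phi\le\sqrt3\,H$, and the propagated bound $H\le\beta<1/\sqrt3$. In particular, since $\gamma\le\sqrt3\,\beta<1$ whenever $\alpha\ge0$, your identities show $-5\beta^2<\kappa<-\beta^2$ holds automatically, i.e.\ the defining inequality of $\mathcal A$ in \eqref{eq:1.13++++} is redundant and the conclusion holds on the full data set $(0,+\infty)\times(0,\tfrac{\sqrt3}{3})\times[0,+\infty)$; the paper's route, which needs the seed conditions $B_1(0)>0$ and $B_3(0)<0$ (Lemmas~\ref{t:dataB} and \ref{t:dH+H2}), does not reveal this. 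What the paper's heavier machinery buys in exchange is the backward-time regime of Theorem~\ref{theorem:2.1}, where $\phi<0$, your step $\dot\phi\le\sqrt3\,H$ fails, and the $B_2$, $B_4$, $B_5$ arguments are required---but that regime is not part of the present statement.

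The genuine gap is exactly the step you deferred to ``bookkeeping'', and it cannot be repaired: the stated constants are unreachable by your (or the paper's) route. Integrating $\dot\phi\ge\sqrt3\,H/(1+4\sqrt3\,\beta^2\phi)$ with $H>1/(5t+\tfrac1\beta)$ and $\phi(0)=\alpha$ gives
\begin{equation*}
\phi+2\sqrt3\,\beta^2\phi^2\ \ge\ \alpha+2\sqrt3\,\beta^2\alpha^2+\frac{\sqrt3}{5}\ln(1+5\beta t)\ \ge\ \frac{\sqrt3}{5}\ln(1+5\beta t),
\end{equation*}
hence
\begin{equation*}
\phi(t)\ \ge\ \frac{-1+\sqrt{1+\frac{24\beta^2}{5}\ln(1+5\beta t)}}{4\sqrt3\,\beta^2},
\end{equation*}
with coefficient $\tfrac{24\beta^2}{5}$, not the $\tfrac{48\beta^2}{5}$ in the statement; producing $48$ would require $\alpha+2\sqrt3\,\beta^2\alpha^2\ge\frac{\sqrt3}{5}\ln(1+5\beta t)$ for \emph{all} $t$, which fails as $t\to\infty$, so no coarsening of the $\alpha$-dependent constants can ``collapse'' into the claimed form. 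Similarly, substituting the upper bounds $\phi<\alpha+\sqrt3\ln(\beta t+1)$ and $H<\beta$ into the same Bernoulli inequality yields $\dot\phi>\frac{1}{1+4\sqrt3\,\alpha\beta^2+12\beta^2\ln(\beta t+1)}\cdot\frac{\sqrt3}{5t+\frac1\beta}$: the logarithm enters to the first power, not under a square root, and for large $t$ this is strictly weaker than the claimed bound, so the stated $\dot\phi$ estimate does not follow either. For context, this defect is shared by the paper itself: its Proposition~\ref{proposition:4.2}, which explicitly covers the Theorem~\ref{theorem:1.2} data, gives the $\tfrac{24\beta^2}{5}$ form, the proof of Theorem~\ref{theorem:1.2} is ``omitted'' as a minor modification, and the square-root form asserted in \eqref{phi+} likewise does not follow from the substitution invoked there. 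So your proposal proves what the paper's supporting propositions actually prove, but neither argument delivers the constants as literally stated, and your optimism that bookkeeping will close that gap is unfounded.
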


\begin{remark}
The global solution described in Theorem~\ref{theorem:2.1} is obtained by solving both the forward and backward Cauchy problems. Specifically, given initial data at \( t = 0 \), we establish the global existence of the solution in both the future (\( t > 0 \)) and the past (\( t < 0 \)) time directions.
\end{remark}

\begin{remark}
The initial conditions do not lose generality due to the following aspects:
\begin{enumerate}
    \item The Hamiltonian constraint determines $\dot{\phi}(0)$ only up to a sign, and the choice $\dot{\phi}(0)>0$ merely selects one of the two solution branches. The other choice can be solved similarly.
    \item In fact, our result implies that any $\alpha\neq 0$ can be normalized to $0$ by a time shift. Hence, without loss of generality, we set $\alpha=0$ for simplicity.
    \item Although numerical evidence suggests a larger region of initial data, we, for simplify, give a restriction on $\beta$, that is $\beta\in(0,\frac{\sqrt{3}}{3})$. This restriction is possible to extend to a larger region but it will cost more difficult calculations and constructions.
\end{enumerate}
\end{remark}

\begin{remark}
While establishing global-in-time existence for this ODE system ensures that the variables do not blow up, the strict criterion for the absence of a physical singularity requires examining both the behavior of curvature invariants and geodesic completeness.
\begin{enumerate}
    \item Regarding curvature invariants, Theorem \ref{theorem:2.1} establishes global bounds for $a$, $H$, and $\dot{H}$ (where \(\dot{H}\) can be controlled by functions of \(H\); see \ref{3} in \S \ref{S:outline}). Since curvature invariants are smooth functions of these variables, they remain finite for all $t$.
    \item As for geodesic completeness, the future-directed causal geodesics can be directly proved to be complete by using the method and tools of \cite{borde2003inflationary,Ruth2006} with the bounds derived in Theorem \ref{theorem:2.1}. In the past direction, however, analogous to \cite[\S II]{borde2003inflationary}, it can be shown that past-directed null geodesics and non-comoving past-directed timelike geodesics (while comoving geodesics remain complete) have finite affine parameter and are therefore incomplete.
    \item  In fact, the spacetime given in Theorem \ref{theorem:2.1} is past-asymptotic to a de Sitter flat patch. It features an extendable boundary analogous to the coordinate boundary of a de Sitter flat patch, even though  the scalar field $\phi$ and $\dot\phi$ diverge in the limit $t \to -\infty$.  We only restrict our attention to the region where $\phi$ and $\dot\phi$ remain finite and the results are consistent with the early numerical simulation in \cite{Rizos1994, Sberna2017} on the interval $t \in (-\infty, +\infty)$.
\end{enumerate}
\end{remark}

\begin{remark}[Non-emptiness of the admissible set $\mathcal{A}$]
	We \textit{claim} that the admissible set $\mathcal{A}$ is non-empty. This is demonstrated by the following two examples. 
	\begin{enumerate}[leftmargin=*]
		\item Let $\alpha = 0$ and $(a_0, \beta) \in (0, +\infty) \times \bigl(0, \tfrac{\sqrt{3}}{3}\bigr)$, which is consistent with the initial data in Theorem~\ref{theorem:2.1}. In this case, one can verify that $-5\beta^2 < \kappa = 6\beta^4 - 3\beta^2 < -\beta^2$. Thus, the data of Theorem~\ref{theorem:1.2} extend that of Theorem~\ref{theorem:2.1} to a broader class of initial data.
		
		\item Let $\alpha = 1$, $\beta = \tfrac{1}{2}$, and $a_0 > 0$. A direct computation yields $\kappa = \tfrac{5\sqrt{21} + 24}{68} \approx -0.6899$, which satisfies $-\tfrac{5}{4} < \kappa < -\tfrac{1}{4}$. Hence, this choice of parameters also lies in $\mathcal{A}$.
	\end{enumerate}
\end{remark}

\begin{remark}
	The estimates of Theorem \ref{theorem:2.1} depicted in Fig. \ref{fig:bound} are consistent with the numerical results of $\phi$ and $H$ presented in Fig. \ref{fig:simulation}. These numerical findings have also been given in \cite[Fig.~4.4]{Sberna2017a}, while the corresponding phase diagram can be found in \cite{Rizos1994}. 
 
\begin{figure}[!htbp]
	\centering
	\begin{subfigure}[b]{0.48\textwidth}
		\includegraphics[width=\linewidth]{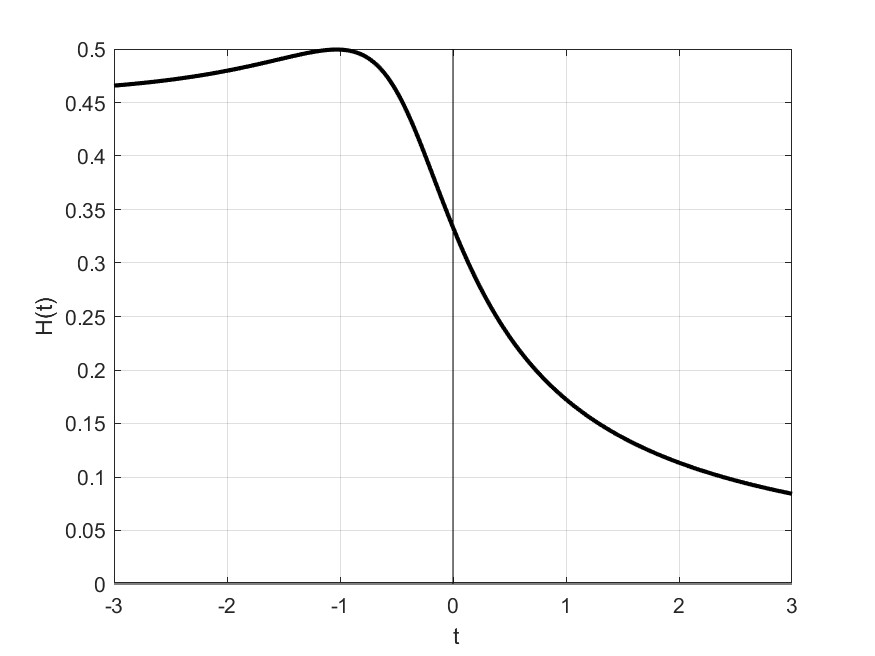}
		\caption{Evolution of $H$}
	\end{subfigure}
	\hfill
	\begin{subfigure}[b]{0.48\textwidth}
		\includegraphics[width=\linewidth]{phi.jpg}
		\caption{Evolution of $\phi$}
	\end{subfigure}
	\caption{
		 Using initial values  
		$\alpha=0$, $\beta=\frac{1}{3}$, these plots manifest the key features of the system: \\
		 (a) Solutions exist  
		 for all $t \in (-\infty, +\infty)$,\\
		 (b) The Hubble parameter  $H$ remains positive and decreases monotonically for $t>0$,\\
		 (c) The scalar field increases monotonically and crosses zero at $t=0$.}
	\label{fig:simulation}
\end{figure} 
\end{remark}

\subsection{Related Works} \label{sec:related work}
Singularity-free cosmological solutions have been extensively studied in the physics literature, notably in the works of Starobinsky, Mukhanov, Trodden, Brandenberger, and Hawking and Ellis \cite{Starobinsky1980,Trodden1993,Brandenberger1993,Ellis2003,Hartle1983}.

Einstein-scalar-Gauss-Bonnet theory occupies a distinguished place in the modified gravity theories. A series of works in the physics literature \cite{Antoniadis1994,Rizos1994,Kanti1999,Kanti2015,Kanti2015a,Kawai1999,Sberna2017,Sberna2017a,Hikmawan2016,Kawai1998,Easther1996} have shown that, through asymptotic analysis, linearized arguments, and extensive numerical computations, for some certain choices of coupling functions, ESGB cosmology may exhibit \emph{singularity-free expanding solutions}, as well as \emph{tachyonic instabilities} leading to scalarization, which is originally identified by Damour and Esposito-Far\`ese in scalar-tensor theories \cite{Damour1993,Damour1996}. These studies strongly suggest that the Gauss-Bonnet coupling can suppress singularity formation and dynamically generate nontrivial scalar profiles.

Specificly, Kanti, Rizos  and Tamvakis \cite{Kanti1999} extended the work of Rizos and Tamvakis \cite{Rizos1994} to include cases with arbitrary spatial curvature. These works established the existence of singularity-free scalar-Gauss-Bonnet cosmologies for a broad class of coupling functions, and considered non-flat FLRW geometries as well, while the novelty of the present work is mathematical in nature: we provide a global-existence proof and explicit bounds for a restricted quadratic-coupling, spatially flat FLRW case.

Furthermore, in \cite{Kanti2015a,Kanti2015}, they proposed that in early-time Gauss–Bonnet gravity, the contribution of the Ricci scalar is negligible compared to the dominant Gauss–Bonnet term. This simplification allows for analytical solutions that effectively approximate the full ESGB theory during the early-time regime. Linearized perturbation analyses were carried out in \cite{Sberna2017,Kawai1998,Kawai1999}. 

The linear tachyonic instability is also referred to as the Jeans instability in different physical contexts. At the linear level, both instabilities originate from the same underlying mechanism: an effective negative mass squared in the perturbation equations. However, their nonlinear evolutions are governed by the underlying physical models, leading to distinct behaviors due to the different nonlinear terms.  A series of works on the nonlinear Jeans instability have been conducted by one of the authors (see \cite{Liu2022,Liu2022b,Liu2023a,Liu2023b,Liu2024}).

The exponential coupling case was analyzed in the companion paper \cite{he2025}. The lack of decoupling introduces new difficulties in deriving bounds for $\dot{H}$.

\subsection{Outlines and Overview of Methods}\label{S:outline}
 
The proof unfolds in three main steps: establishing local existence (\S\ref{sec:2}), deriving  bounds via \textit{a key structural identity} and \textit{decoupled differential inequalities for $H$} (\S\ref{sec:3}), and extending the solution globally (\S\ref{sec:4}).

The core technical ingredient lies in a structural identity, referred to as the \textit{power identity}, derived from the derivatives of the constraint and scalar field equations. This nonlinear relation, quadratic in $\dot\phi$ and $\phi$ with coefficients depending on $\dot H$ and $H$, governs the evolution of $H$ and $\phi$.

Using a contradiction argument, we leverage the power identity to extract \textit{a set of decoupled differential inequalities for $H$}, which yield upper and lower bounds of $H$.  These bounds then enable a hierarchical derivation of estimates for $\phi$, $\dot\phi$, and $a$.

In line with this approach, we now sketch the proof. Since \S\ref{sec:2} establishes local existence by reformulating the system \eqref{eq:2.5}--\eqref{eq:2.6} as an ODE $\frac{d}{dt} \mathcal{U} = \mathcal{F}(\mathcal{U})$ and applying the local existence theorem, and \S\ref{sec:4} extends these solutions globally via continuation arguments, we focus here on the estimates for $H$ and $\phi$ developed in \S\ref{sec:2}.

The heart of the analysis lies in \S\ref{sec:3}, which develops a hierarchy of estimates 
of $H(t)$ and $\phi(t)$ on the intervals $t \in (\mathcal{T}_-, 0)$ and $t \in (0, \mathcal{T}_+)$.  The foundation of all estimates is the quadratic identity in $ \dot{\phi} $ and $ \phi $  from
	Proposition \ref{proposition:3.2++} (the power identity), 
	\begin{equation*}
\mathcal{P}:=H\left(\biggl(2H^2-1-\frac{\dot{H}}{3H^2}\biggr)\dot{\phi}^2 - 8H^3\phi \dot{\phi} \right.  
\left. - 12H^6\biggl(1+\frac{\dot{H}}{H^2}\biggr)\phi^2\right) = 0 .
	\end{equation*}
	This identity  roughly serves as the energy conservation law. The \textit{key idea} for handling the coupled system \eqref{eq:2.5}--\eqref{eq:2.6} is to use the power identity to decouple $H$ by a set of  differential inequalities of $H$. 
In specific,  the key strategy involves
	\begin{enumerate}[leftmargin=*]
		\item  Defining auxiliary functions $B_\ell$, $\ell=1,\cdots,5$ (e.g., $ B_1 = \dot{H} + 5H^2 $) that capture the dominant behaviors  in the evolution of $H$.
		
		\item Proving that the evolution of $H$ satisfies,  for instance, $B_\ell>0$ using proof by contradiction.   \textit{Assuming the opposite} of the desired bound (e.g., $B_\ell \leq 0$ when proving $B_\ell > 0$) and showing inconsistency with $\mathcal{P} = 0$.
		
		\item Solving decoupled differential inequalities of $H$ (e.g., $B_1 = \dot{H} + 5H^2 > 0$) to derive explicit bounds (e.g.,  $H(t) > \frac{1}{5t + \frac{1}{\beta}}$).\label{3}
	\end{enumerate}
	
The overall structure of the argument is as follows: 
 Local existence $\rightarrow$  Lower bounds for $H$ $\rightarrow$  Upper bounds for $H$ $\rightarrow$  Improved Lower bounds for $H$ $\rightarrow$ Bounds for $\phi$ $\rightarrow$  Global existence and estimates on $\dot{\phi}$ and $a$.

\begin{remark}\label{R:pi}
The ``power identity'' employed in this work is a structural consequence of the Friedmann and scalar field equations rather than a feature specific to ESGB theory. In homogeneous cosmologies with additional fluid components, the field equations couple with the cosmological Euler equations, resulting in a more intricate nonlinear ODE system with additional variables. While the power identity strategy remains applicable in principle, constructing suitable bounding differential inequalities in these cases becomes technically more demanding. We will proceed this in the near future (see \S \ref{S:F}).
As a novel analytical tool, the power identity method shows potential beyond the present setting, though its full scope remains unclear. Our preliminary explorations highlight two primary challenges: first, constructing an effective structure identity (analogue to the power identity in this article) is challenging, which typically stems from the equations or conservation laws; second, the associated bounding equations must admit valid solutions, whose forms are highly problem-dependent. For instance, in systems involving exponential coupling \cite{he2025}, the coupled structure prevents a clean decoupling, yet the method remains tractable. 
Furthermore, we are extending this framework to static, spherically symmetric black holes with nontrivial scalar fields. This application, while significantly more complex, underscores the method's inherent generality. A systematic understanding of its full range of applicability is left for future work.
\end{remark}

\subsection{Future Works}\label{S:F}
While this work focuses on flat FLRW cosmology with quadratic coupling, the power identity method is generalizable. It has already been successfully applied to exponential coupling \cite{he2025}, and further extensions are underway.
\begin{enumerate}
\item Incorporating matter, spatial curvature, or scalar potentials into the FLRW background.
\item Providing a rigorous proof of black hole scalarization in (extended) scalar–tensor theories consistent with the numerical results \cite{Doneva2018,Silva2018,Doneva2024}.
\item Attempting to apply the method to general nonlinear ordinary or partial differential equations.
\end{enumerate}
These topics are currently in progress.

\section{Constraints and local existence}
\label{sec:2}
We consider the ESGB field equations \eqref{eq:2.4++++++}–\eqref{eq:2.3++++++} under the assumption of a homogeneous and isotropic universe, where the metric is given by the FLRW form \eqref{e:FLRW}.  In this setting, the scalar curvature and the Gauss–Bonnet term reduce to
\begin{equation*}
	R = 6 \left( \frac{\ddot{a}}{a} + \left( \frac{\dot{a}}{a} \right)^2 \right) \AND R^2_{GB} = 24 \left( \frac{\dot{a}}{a} \right)^2 \frac{\ddot{a}}{a}. 
\end{equation*} 
The system \eqref{eq:2.4++++++}–\eqref{eq:2.3++++++} then reduces to the following equations for the Hubble parameter \[H := \frac{\dot{a}}{a}\] and the scalar field $\phi$
\begin{equation}
	3H^2 = \dot{\phi}^2 + 12H^3 \phi \dot{\phi}, \quad 
	\label{eq:2.5++}
\end{equation}
\begin{equation}
	2\dot{H}    + 3 H^2 = -\dot{\phi}^2 + 8(\dot{H} + H^2) H \phi \dot{\phi} + 4H^2 \big(\dot{\phi}^2 + \phi \ddot{\phi} \big),
	\label{eq:2.4+}
\end{equation}
\begin{equation}
	\ddot{\phi} =- 3\dot{\phi}H - 6\phi(H^2 + \dot{H})H^2  . 
	\label{eq:2.6++}
\end{equation}
We point out that \eqref{eq:2.5++} is known as the Friedmann equation, or  the Hamiltonian constraint.

\subsection{Initial data and the constraint}\label{s:data} 
Analogous to the Cauchy problem for Einstein–scalar systems (cf.~\cite{ChoquetBruhat2009,Ringstroem2009}),  \eqref{eq:2.5++} serves as the \emph{Hamiltonian constraint}, restricting the admissible initial data $(a,H,\phi,\dot{\phi})|_{t=0}$. If this constraint holds at $t=0$, then it remains valid for all $t$,  since by virtue of \eqref{eq:2.4+}–\eqref{eq:2.6++}, we have
\begin{equation}\label{e:dtctnt}
\partial_t(3H^2-\dot{\phi}^2 - 12H^3 \phi \dot{\phi}) = 0.
\end{equation}
Solving \eqref{eq:2.5++} for $\dot{\phi}$ yields  
	\begin{equation}\label{e:dotphi}
		\dot{\phi}  =   -6\phi H^3 +(-1)^\iota \sqrt{\left(6\phi H^3\right)^2 + 3H^2}, \quad  (\iota=0,1 )  . 
	\end{equation}

Therefore, specifying the initial data $(a_0,\beta,\alpha) := (a, H, \phi)|_{t=0}$ determines $\dot{\phi}|_{t=0}$ via \eqref{e:dotphi}, that is, 
\begin{equation}
	\dot{\phi} |_{t=0} 
	=  -6\alpha \beta^3 +(-1)^\iota \sqrt{\left(6\alpha \beta^3\right)^2 + 3 \beta^2}, \quad  (\iota=0,1 ).
	\label{eq:2.7}
\end{equation}
This gives two admissible initial data sets:
\begin{align}\label{e:dtset}
	  (a,H,\phi,\dot{\phi})|_{t=0} =\begin{cases}
	  (a_0,\beta,\alpha,-6\alpha \beta^3 + \sqrt{\left(6\alpha \beta^3\right)^2 + 3 \beta^2}), \\
	  (a_0,\beta,\alpha,-6\alpha \beta^3 - \sqrt{\left(6\alpha \beta^3\right)^2 + 3 \beta^2}).
	  \end{cases}
\end{align}

\subsection{Local existences}
We begin with a useful estimate
\begin{lemma}\label{t:Dvalue}
	If $\dot{\phi}$ satisfies \eqref{e:dotphi} on some set $\mathcal{I}$, then for all $t \in \mathcal{I}$,
	\begin{equation*}
	2 - 8H \phi\dot{\phi} + 24H^4 \phi^2>0  . 
\end{equation*}  
\end{lemma}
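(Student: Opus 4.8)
The plan is to recognize that the hypothesis \eqref{e:dotphi} is nothing but the statement that $\dot\phi$ satisfies the Hamiltonian constraint \eqref{eq:2.5++}, i.e.
\begin{equation*}
\dot\phi^2 + 12 H^3\phi\dot\phi = 3H^2 ,
\end{equation*}
since \eqref{e:dotphi} is precisely the quadratic formula applied to this relation viewed as a quadratic equation in $\dot\phi$. The whole point is that, although the quantity $2 - 8H\phi\dot\phi + 24 H^4\phi^2$ looks indefinite (the middle term carries no fixed sign), the constraint lets us trade the cross term $H\phi\dot\phi$ for a manifestly nonnegative quantity, turning the expression into a sum of squares.

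First I would treat the generic case $H \neq 0$. From the constraint one has $12 H^3\phi\dot\phi = 3H^2 - \dot\phi^2$; dividing by $12 H^2$ gives $H\phi\dot\phi = (3H^2 - \dot\phi^2)/(12 H^2)$, whence
\begin{equation*}
2 - 8H\phi\dot\phi = 2 - \frac{2(3H^2 - \dot\phi^2)}{3H^2} = \frac{2\dot\phi^2}{3H^2} .
\end{equation*}
Substituting this back yields the key identity
\begin{equation*}
2 - 8H\phi\dot\phi + 24 H^4\phi^2 = \frac{2\dot\phi^2}{3H^2} + 24 H^4\phi^2 ,
\end{equation*}
which is a sum of two nonnegative terms and hence $\geq 0$.

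Finally I would upgrade this to strict positivity and dispose of the degenerate case. If $H = 0$ the quantity equals $2 > 0$ outright. If $H \neq 0$, the right-hand side above vanishes only when $\dot\phi = 0$ and $\phi = 0$ simultaneously; but then the constraint forces $3H^2 = 0$, contradicting $H \neq 0$. Hence the expression is strictly positive in every case. There is essentially no analytic obstacle here: the only two points demanding care are that the identity is valid only after dividing by $H^2$ (so the case $H = 0$ must be isolated) and that equality in the sum of nonnegative terms is incompatible with the constraint. The genuine content of the lemma is the algebraic observation that the Hamiltonian constraint converts the seemingly indefinite form $2 - 8H\phi\dot\phi + 24 H^4\phi^2$ into a sum of squares.
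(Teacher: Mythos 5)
Your proof is correct, but it takes a genuinely different route from the paper's. The paper keeps the explicit branch formula \eqref{e:dotphi}, multiplies it by $H\phi$, and uses the elementary inequality \eqref{eq:B2} to obtain the strict bound $H\phi\dot{\phi} < \sqrt{3}H^2|\phi|$, after which the expression is minorized by the complete square $2\bigl(1-2\sqrt{3}H^2|\phi|\bigr)^2 \geq 0$; strictness there comes from the strict inequality on the cross term, and the argument splits into the cases ($H=0$ or $\phi=0$) versus ($H\neq 0$ and $\phi\neq 0$). You instead observe that \eqref{e:dotphi} is exactly the quadratic formula for the Hamiltonian constraint \eqref{eq:2.5++}, and use the constraint to eliminate the cross term \emph{exactly}, yielding for $H\neq 0$ the identity
\begin{equation*}
2 - 8H\phi\dot{\phi} + 24H^4\phi^2 \;=\; \frac{2\dot{\phi}^2}{3H^2} + 24H^4\phi^2 ,
\end{equation*}
with strict positivity because vanishing would force $\dot{\phi}=0$, which through the constraint forces $3H^2=0$, a contradiction. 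Your version is arguably cleaner: it needs no auxiliary inequality, requires only the single case split $H=0$ versus $H\neq 0$, and the exact identity is more informative, since it shows that the denominator appearing in \eqref{eq-dotH} equals a manifestly positive sum of squares on the constraint surface (quantifying \emph{how} positive it is in terms of $\dot{\phi}$ and $\phi$). What the paper's route buys in exchange is that it works directly from the branch formula without dividing by $H^2$, so the sum-of-squares structure is exhibited through an estimate rather than an identity; both arguments are elementary and equally rigorous.
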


\begin{proof}
	(1) If $H = 0$ or $\phi = 0$ at some $t \in \mathcal{I}$, then the expression reduces to $2 - 8H \phi\dot{\phi} + 24H^4 \phi^2=2 > 0$ at those points.
	
	(2) If $H \neq 0$ and $\phi \neq 0$ at $t \in \mathcal{I}$, then using \eqref{e:dotphi}, we have
	\begin{equation*}
	H\phi\dot{\phi} = -6\phi^2 H^4 \pm \sqrt{\left(6\phi^2 H^4\right)^2 + 3\phi^2H^4}.
\end{equation*} 
	Applying \eqref{eq:B2}, it follows that
	\begin{align*}
	H\phi\dot{\phi}  = & -6\phi^2 H^4 \pm \sqrt{\left(6\phi^2 H^4\right)^2 + 3\phi^2H^4} \notag \\
    \leq  & -6\phi^2 H^4+ \sqrt{\left(6\phi^2 H^4\right)^2 + 3\phi^2H^4}<\sqrt{3}H^2|\phi| . 
\end{align*}
Substituting this into the expression yields
	\begin{equation*}
	2 - 8H \phi\dot{\phi} + 24H^4 \phi^2>	 2(1-2\sqrt{3}H^2|\phi|)^2 \geq 0 . 
\end{equation*}
	
	This completes the proof.
\end{proof}

\begin{proposition}[Local Existence]\label{t:locext}
	Given initial data $(a_0, \beta, \alpha) := (a,H,\phi)|_{t=0}$, there exists a constant $T > 0$, such that the system \eqref{eq:2.4+}--\eqref{eq:2.6++}  with this initial data  admits a pair of   solutions $(a,H,\phi,\dot{\phi}) \in C^1((-T,T),\mathbb{R}^3)$ corresponding to the initial data sets in \eqref{e:dtset}. 
\end{proposition}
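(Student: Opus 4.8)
The plan is to recast the second-order system \eqref{eq:2.4+}--\eqref{eq:2.6++} as an autonomous first-order ODE $\frac{d}{dt}\mathcal{U} = \mathcal{F}(\mathcal{U})$ for the state vector $\mathcal{U} = (H,\phi,v)^{T}\in\mathbb{R}^{3}$, where $v$ is a new dependent variable standing for $\dot\phi$, and then to invoke the standard Picard--Lindel\"of (Cauchy--Lipschitz) theorem. Note that $a$ does not feed back into \eqref{eq:2.4+}, \eqref{eq:2.6++}, or the constraint \eqref{eq:2.5++} (it enters only through $H=\dot a/a$), so the genuine phase space is three-dimensional and $a$ is recovered afterward by the quadrature $a(t)=a_{0}\exp\bigl(\int_{0}^{t}H(s)\,ds\bigr)$; this is why the regularity in the statement is phrased over $\mathbb{R}^{3}$. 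The immediate difficulty is that \eqref{eq:2.4+} and \eqref{eq:2.6++} are implicitly coupled in their top-order derivatives: \eqref{eq:2.4+} contains both $\dot H$ and $\ddot\phi$, while \eqref{eq:2.6++} expresses $\ddot\phi$ in terms of $\dot H$. Before any ODE theory applies, I must disentangle this coupling and solve for $\dot H$ and $\ddot\phi$ as honest functions of $(H,\phi,v)$.

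First I would substitute the expression \eqref{eq:2.6++} for $\ddot\phi$ into the term $\phi\ddot\phi$ on the right-hand side of \eqref{eq:2.4+}. After collecting all terms proportional to $\dot H$, the equation becomes \emph{linear} in $\dot H$,
\begin{equation*}
	\bigl(2 - 8H\phi v + 24H^{4}\phi^{2}\bigr)\,\dot H = \mathcal{R}(H,\phi,v),
\end{equation*}
where $\mathcal{R}$ is a polynomial in $(H,\phi,v)$. The coefficient on the left is exactly the quantity $D := 2 - 8H\phi v + 24H^{4}\phi^{2}$ studied in Lemma~\ref{t:Dvalue}.

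The crucial point — and the step I expect to carry the real content — is the invertibility of this relation, i.e.\ $D\neq 0$ near the initial data. At $t=0$ the prescribed data $(a_{0},\beta,\alpha,\dot\phi(0))$ satisfy the constraint \eqref{eq:2.5++}, equivalently \eqref{e:dotphi}, so Lemma~\ref{t:Dvalue} gives $D|_{t=0}>0$. Since $D$ is a polynomial, hence continuous, function of the state, the set $\{D>0\}$ is open and contains the initial point; on it I may divide to obtain $\dot H = \mathcal{G}(H,\phi,v) := \mathcal{R}/D$, a smooth (rational with non-vanishing denominator) function, and then read off $\ddot\phi$ from \eqref{eq:2.6++} as another smooth function of $(H,\phi,v)$. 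This determines
\begin{equation*}
	\mathcal{F}(\mathcal{U}) = \bigl(\mathcal{G}(H,\phi,v),\; v,\; -3vH - 6\phi(H^{2}+\mathcal{G})H^{2}\bigr)^{T},
\end{equation*}
which is smooth, in particular locally Lipschitz, on the open neighborhood $\{D>0\}$ of the initial data.

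Finally I would apply Picard--Lindel\"of to $\frac{d}{dt}\mathcal{U} = \mathcal{F}(\mathcal{U})$ with the initial point fixed by the data, yielding a unique $C^{1}$ solution on some interval $(-T,T)$ with $T>0$; recovering $a$ by the quadrature above and differentiating the components shows $(a,H,\phi,\dot\phi)$ is $C^{1}$ in the stated sense. The two admissible sign choices $\iota=0,1$ in \eqref{e:dotphi} supply the two distinct initial points listed in \eqref{e:dtset}, and running the argument for each produces the asserted \emph{pair} of solutions. The only genuinely non-routine ingredient is the non-degeneracy $D\neq 0$, which is precisely why Lemma~\ref{t:Dvalue} is established beforehand; everything else is the standard reduction to first order together with Cauchy--Lipschitz.
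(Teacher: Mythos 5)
Your proposal is correct and follows essentially the same route as the paper: substitute \eqref{eq:2.6++} into \eqref{eq:2.4+}, solve for $\dot H$ with denominator $2-8H\phi\dot\phi+24H^4\phi^2$ controlled by Lemma~\ref{t:Dvalue}, reduce to a first-order system, and apply Picard--Lindel\"of to each of the two data sets in \eqref{e:dtset}. If anything, your version is slightly more careful than the paper's, which keeps $a$ in the state vector and asserts $F_1,F_2\in C^1(\mathbb{R}^3)$ globally, whereas you correctly restrict to the open set where the denominator is positive (guaranteed at the constrained initial data) and recover $a$ by quadrature.
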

\begin{proof} 
Substitute \eqref{eq:2.6++} into \eqref{eq:2.4+}. Using Lemma \ref{t:Dvalue}, we obtain
	\begin{equation}\label{eq-dotH}
	\dot{H} = \frac{-4H^3 \phi\dot{\phi} - \dot{\phi}^2 + 4H^2 \dot{\phi}^2 - 24H^6 \phi^2 - 3H^2}{2 - 8H \phi\dot{\phi} + 24H^4 \phi^2}.
\end{equation}  
	Let $
		\Phi:=\dot{\phi}$. 
	Then gathering $H=\dot{a}/a$, \eqref{eq-dotH}, $\dot{\phi} = \Phi$ and \eqref{eq:2.6++} together, the system becomes
	\begin{equation}\label{e:locsys} 
		 \frac{d}{dt} \p{a\\H\\\phi\\\Phi}= \p{ a H\\	F_1(H,\phi,\Phi) \\
		  	\Phi \\ F_2(H,\phi,\Phi) },
	\end{equation}
	where
	\begin{align}
		F_1(H,\phi,\Phi):=& \frac{-4H^3 \phi \Phi- \Phi^2 + 4H^2 \Phi^2 - 24H^6 \phi^2 - 3H^2}{2 - 8H \phi \Phi + 24H^4 \phi^2},  \label{e:F1} \\
		F_2(H,\phi,\Phi):=& - 3\Phi H - 6H^2 \phi \left(H^2 + F_1(H,\phi,\Phi) \right). \label{e:F2} 
	\end{align}
		We can verify that  $F_1, F_2\in C^1(\Rbb^3)$. By \S\ref{s:data}, the initial data set \eqref{e:dtset} provides two admissible choices.  Applying Theorem~\ref{theorem:PL}, we obtain a unique solution $(a,H, \phi, \Phi) \in C^1((-T, T), \mathbb{R}^3)$ to the system \eqref{e:locsys} for each choice of initial data.  Hence, the original system \eqref{eq:2.4+}–\eqref{eq:2.6++} admits a pair of $C^1$ solutions $(a,H, \phi, \dot{\phi})$ on $(-T, T)$ corresponding to the initial data $(a_0,\beta,\alpha)$. This completes the proof. 
\end{proof}

\section{Estimates of the FLRW Solution in the ESGB System}
\label{sec:3}

In the previous section, we have proven the local existence of a FLRW solution $(a, H, \phi, \dot{\phi}) \in C^1((-T, T), \mathbb{R}^3)$ to the ESGB system \eqref{eq:2.4++++++}--\eqref{eq:2.3++++++}, given initial data $(a_0, \beta, \alpha)$. In this section, we will derive estimates for this solution within its interval of existence.

\subsection{Estimates for $H$}

To proceed, we define $\mathcal{T}_- \in [-\infty, 0)$ and $\mathcal{T}_+ \in (0, +\infty]$ as the maximal backward and forward existence times of the FLRW solution, respectively.

\begin{lemma}[Power identity\footnote{We refer to it as the “power identity” because it roughly corresponds to the vanishing of time derivative of the energy.}]
	\label{proposition:3.2++}
	The following identity holds for all $t\in(\mathcal{T}_-,\mathcal{T}_+)$
		\begin{equation*} \mathcal{P}:=H\left(\biggl(2H^2-1-\frac{\dot{H}}{3H^2}\biggr)\dot{\phi}^2 - 8H^3\phi \dot{\phi} \right.  
		\left. - 12H^6\biggl(1+\frac{\dot{H}}{H^2}\biggr)\phi^2\right) = 0. 
	\end{equation*}
\end{lemma}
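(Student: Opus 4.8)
The plan is to recognize $\mathcal{P}$ as nothing more than a combination of the Hamiltonian constraint and its time derivative, so that the whole identity collapses to constraint propagation. Introduce the constraint function
\[
\mathcal{C} := 3H^2 - \dot{\phi}^2 - 12H^3\phi\dot{\phi},
\]
so that the Friedmann equation \eqref{eq:2.5++} reads $\mathcal{C}=0$ and the propagation statement \eqref{e:dtctnt} reads $\dot{\mathcal{C}}=0$ along the solution on $(\mathcal{T}_-,\mathcal{T}_+)$. The key assertion I would prove is the purely algebraic identity
\[
6H\mathcal{P} = 2\dot{H}\,\mathcal{C} - H\,\dot{\mathcal{C}},
\]
valid once $\ddot{\phi}$ is eliminated by the scalar field equation \eqref{eq:2.6++}. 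Granting this, the right-hand side vanishes because $\mathcal{C}=0$ and $\dot{\mathcal{C}}=0$, whence $6H\mathcal{P}=0$ and therefore $\mathcal{P}=0$ (dividing by $H$ where $H\neq0$, and extending to any isolated zero of $H$ by continuity of $\mathcal{P}$, since the constraint forces $\dot\phi^2/H\to 0$ there).

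To establish the algebraic identity I would first differentiate $\mathcal{C}$ directly, obtaining
\[
\dot{\mathcal{C}} = 6H\dot{H} - 2\dot{\phi}\ddot{\phi} - 36H^2\dot{H}\phi\dot{\phi} - 12H^3\dot{\phi}^2 - 12H^3\phi\ddot{\phi},
\]
and then substitute $\ddot{\phi} = -3H\dot{\phi} - 6H^2(H^2+\dot{H})\phi$ from \eqref{eq:2.6++} into both places where $\ddot\phi$ appears. Collecting terms by the monomials $\dot{\phi}^2$, $\phi\dot{\phi}$, $\phi^2$ and the $\dot H$-only term yields
\[
\dot{\mathcal{C}} = 6H\dot{H} + (6H - 12H^3)\dot{\phi}^2 + (48H^4 - 24H^2\dot{H})\phi\dot{\phi} + (72H^7 + 72H^5\dot{H})\phi^2.
\]
It then remains to expand $6H\mathcal{P}$ and $2\dot{H}\mathcal{C} - H\dot{\mathcal{C}}$ independently and compare the four coefficients: the standalone term $6H^2\dot{H}$ cancels between $2\dot H\mathcal C$ and $-H\dot{\mathcal C}$, and the coefficients of $\dot\phi^2$ (namely $12H^4-6H^2-2\dot H$), of $\phi\dot\phi$ (namely $-48H^5$), and of $\phi^2$ (namely $-72H^8-72H^6\dot H$) match those produced by $6H\mathcal{P}$ term by term.

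I expect no conceptual obstacle here; the identity is forced once one differentiates the constraint, and the real work is the careful bookkeeping in the coefficient comparison, especially tracking the two contributions of $\ddot{\phi}$. The one subtlety worth flagging is that $\mathcal{P}$ as written contains division by $H$ (through the $\dot{H}/3H^2$ and $\dot{H}/H^2$ factors), which is why I would phrase the whole argument through the polynomial identity for $6H\mathcal{P}$ and only divide by $H$ at the very end. This keeps every intermediate expression a genuine polynomial in $(H,\dot H,\phi,\dot\phi)$ and avoids any spurious singularity in the verification.
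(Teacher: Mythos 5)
Your proposal is correct and follows essentially the same route as the paper: the paper likewise differentiates the constraint (its equation \eqref{e:dtctnt}), eliminates $\ddot{\phi}$ via the scalar field equation \eqref{eq:2.6++}, and then substitutes the constraint \eqref{eq:2.5++} (to replace $4H\phi\dot{\phi}-1$ by $-\dot{\phi}^2/(3H^2)$), which is exactly what your term $2\dot{H}\mathcal{C}$ accomplishes in the identity $6H\mathcal{P}=2\dot{H}\,\mathcal{C}-H\,\dot{\mathcal{C}}$. Your packaging as a single polynomial identity, deferring the division by $H$ and noting the continuous extension at zeros of $H$, is a slightly more careful presentation of the same computation, and your coefficient bookkeeping checks out.
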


\begin{proof}
	We begin by expanding \eqref{e:dtctnt}, and then use \eqref{eq:2.6++} to eliminate the second derivative $\ddot{\phi}$. This yields
	\begin{equation*}
		H\bigl((2H^2-1)\dot{\phi}^2 - 8H^3\phi \dot{\phi} - 12H^4\dot{H}\phi^2 - 12H^6\phi^2 + (4H \phi\dot{\phi} - 1) \dot{H}\bigr) = 0. 
	\end{equation*}
Finally, substituting the expression from \eqref{eq:2.5++} to replace $4H\phi\dot{\phi} - 1$ completes the derivation. 
	\end{proof}
 
\begin{lemma}
\label{lemma:3.2++++}
     If $H(T)\neq0$ for some constant $T$, then $
        \dot{\phi}(T)\neq0$. 
\end{lemma}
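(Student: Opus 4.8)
The plan is to argue by contradiction, using only the Hamiltonian constraint \eqref{eq:2.5++}. The crucial observation is that both terms on the right-hand side of the Friedmann equation
\[
3H^2 = \dot{\phi}^2 + 12H^3 \phi \dot{\phi}
\]
carry at least one factor of $\dot{\phi}$. Hence if the time derivative of the scalar field were to vanish at $t = T$, the entire right-hand side would collapse to zero, leaving no room for $H(T)$ to be nonzero.

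Concretely, I would suppose for contradiction that $\dot{\phi}(T) = 0$. Substituting this into \eqref{eq:2.5++} evaluated at $t = T$, the first term $\dot{\phi}(T)^2$ vanishes and the second term $12 H(T)^3 \phi(T) \dot{\phi}(T)$ also vanishes (regardless of the values of $H(T)$ and $\phi(T)$). This forces $3 H(T)^2 = 0$, and therefore $H(T) = 0$, which directly contradicts the standing hypothesis $H(T) \neq 0$. The contradiction shows that $\dot{\phi}(T) \neq 0$, completing the argument.

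There is essentially no technical obstacle here: the statement is an immediate structural consequence of the constraint, and the only point requiring attention is to invoke the Hamiltonian constraint \eqref{eq:2.5++} rather than the evolution equations \eqref{eq:2.4+}--\eqref{eq:2.6++}, since it is the constraint that algebraically ties $H$ to $\dot{\phi}$ at each fixed time. I would also remark that the same computation shows the stronger logical equivalence suggested by the constraint, namely that $\dot{\phi}(T) = 0$ holds if and only if $H(T) = 0$, although only the one implication stated in the lemma is needed for the subsequent estimates.
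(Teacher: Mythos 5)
Your proof is correct, but it takes a different (and more elementary) route than the paper. The paper does not argue by contradiction from the raw constraint \eqref{eq:2.5++}; instead it works with the solved form \eqref{e:dotphi}, $\dot{\phi} = -6\phi H^3 + (-1)^\iota\sqrt{(6\phi H^3)^2 + 3H^2}$, and uses the strict inequality $\sqrt{(6\phi H^3)^2 + 3H^2} > 6|\phi H^3|$ (valid precisely when $H \neq 0$) to show that on the $\iota = 0$ branch $\dot{\phi}(T) > 0$ strictly, and on the $\iota = 1$ branch $\dot{\phi}(T) < 0$ strictly. Both arguments rest on the Hamiltonian constraint holding at $t = T$ (guaranteed by the propagation identity \eqref{e:dtctnt}), so neither has a gap, and yours is shorter: the observation that every term on the right of \eqref{eq:2.5++} carries a factor of $\dot{\phi}$ immediately forces $3H(T)^2 = 0$ if $\dot{\phi}(T) = 0$. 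What the paper's longer computation buys is strictly more information, namely the branch-wise sign dichotomy recorded in \eqref{e:dphisgn}; this is not needed for the lemma as stated, but it is reused later (in the proof of Lemma \ref{lemma:3.1}) to show that a solution with $\dot{\phi}(0) > 0$ must stay on the $\iota = 0$ branch for all time, via a continuity/no-sign-change argument. If one adopted your proof verbatim, that subsequent argument would need the sign information re-derived; your iff remark ($\dot{\phi}(T) = 0 \Leftrightarrow H(T) = 0$) is also correct but does not substitute for the branch signs.
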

\begin{proof}
    If $H(T)\neq0$, then
    \begin{equation*}
        \sqrt{(6\phi(T) H^3(T))^2+3H^2(T)}>6|\phi(T) H^3(T)|. 
    \end{equation*}
   Substituting this into \eqref{e:dotphi}, when $\iota=0$, we obtain
    \begin{equation*}
    \begin{split}
        \dot{\phi}(T)&=-6\phi(T) H^3(T)+\sqrt{(6\phi(T) H^3(T))^2+3H^2(T)}\\
        &>6|\phi(T) H^3(T)|-6\phi(T) H^3(T)\geq0,
         \end{split}
    \end{equation*}
   	when $\iota=1$, similarly,
       \begin{equation*}
         \begin{split}
        \dot{\phi}(T)&=-6\phi(T) H^3(T)-\sqrt{(6\phi(T) H^3(T))^2+3H^2(T)}\\
       & <-6\phi(T) H^3(T)-6|\phi(T) H^3(T)|\leq0.
            \end{split}
    \end{equation*}
   		In either case, $\dot{\phi}(T) \neq 0$. 
\end{proof}

\subsubsection{Lower bound of $H$ for $t \in (0,\mathcal{T}_+)$}
We begin by establishing a lower bound for $H$ on the interval $t \in (0, \mathcal{T}_+)$. Let $B_1(t)$ be defined as \begin{equation}
  B_1:=\dot{H}+5H^2.
  \label{B_1}
\end{equation}

\begin{lemma}\label{lemma:3.2}
 If there exists a constant $T_0 > 0$ such that $B_1(t) > 0$ for all $t \in (0, T_0)$, then
	\begin{equation*}
		H(t) > \frac{1}{5t + \frac{1}{\beta}}, \quad \text{for all } t \in (0,T_0).
	\end{equation*}
\end{lemma}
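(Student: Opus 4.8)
The plan is to recognize that the claimed lower bound is exactly the solution of the associated Riccati \emph{equality}. Writing $\bar H(t) := \frac{1}{5t + 1/\beta}$, one checks immediately that $\dot{\bar H} = -5\bar H^2$, i.e. $\dot{\bar H} + 5\bar H^2 = 0$, and that $\bar H(0) = \beta = H(0)$. Thus $\bar H$ is the borderline solution of the differential inequality defining $B_1$, and the hypothesis $B_1 = \dot H + 5H^2 > 0$ says precisely that $H$ is a strict supersolution with the same initial value. The whole statement is therefore a comparison (Gr\"onwall/Riccati) result, and the real work is to convert the strict differential inequality into the pointwise bound $H > \bar H$.

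First I would establish positivity: $H(t) > 0$ for all $t \in (0, T_0)$. Since $H(0) = \beta > 0$ and $H$ is $C^1$ on its interval of existence, suppose $H$ reaches $0$ somewhere in $(0,T_0)$ and let $t_* \in (0, T_0)$ be the first such time, so $H > 0$ on $[0, t_*)$ and $H(t_*) = 0$. Then $\dot H(t_*) \le 0$, since a positive function decreasing to $0$ cannot have positive slope there, whereas the hypothesis forces
\[
B_1(t_*) = \dot H(t_*) + 5H(t_*)^2 = \dot H(t_*) > 0,
\]
a contradiction. Hence $H$ stays strictly positive on $(0,T_0)$.

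With positivity in hand the estimate follows in one line. On $(0, T_0)$ the reciprocal $1/H$ is differentiable and
\[
\frac{d}{dt}\Big(\frac{1}{H}\Big) = -\frac{\dot H}{H^2} < 5,
\]
where the strict inequality is just $\dot H > -5H^2$, i.e. $B_1 > 0$, divided by $H^2 > 0$. Integrating from $0$ to any $t \in (0, T_0)$ and using $1/H(0) = 1/\beta$ gives $\frac{1}{H(t)} < 5t + \frac{1}{\beta}$; since both sides are positive, inverting yields $H(t) > \frac{1}{5t + 1/\beta}$, as claimed.

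The hard part is the positivity step, not the final integration: dividing by $H^2$ and inverting the resulting inequality are only legitimate once $H > 0$ is known, so the first-hitting-time/barrier argument---crucially using that $B_1 > 0$ pins down the sign of $\dot H$ exactly at a prospective zero of $H$---is the genuine content. An equivalent route that avoids treating positivity separately is to set $w := H - \bar H$, derive the strict inequality $\dot w + 5(H+\bar H)w > 0$, and multiply by the integrating factor $\exp\!\big(\int_0^t 5(H+\bar H)\,ds\big)$; then $(\mu w)' > 0$ with $(\mu w)(0)=0$ forces $w > 0$ on $(0,T_0)$, which simultaneously delivers both positivity and the bound since $\bar H > 0$.
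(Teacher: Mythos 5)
Your proof is correct, and it establishes the comparison by hand rather than by citation, which is the main difference from the paper. The paper's proof uses exactly the same comparison function --- it defines $\underline{H}_+$ as the solution of the Riccati problem $\dot{\underline{H}}_+ = -5\underline{H}_+^2$, $\underline{H}_+(0)=\beta$, solves it explicitly to get $\frac{1}{5t+1/\beta}$ --- but then obtains $H>\underline{H}_+$ in one step by invoking the comparison theorem of its appendix (Theorem~\ref{theorem:C}) as a black box. You instead prove the comparison directly: first the barrier/first-zero argument giving $H>0$ on $(0,T_0)$ (correctly exploiting that $B_1>0$ forces $\dot H(t_*)>0$ at a prospective first zero, while monotone approach to zero forces $\dot H(t_*)\le 0$), then linearization of the Riccati inequality via $\frac{d}{dt}(1/H)<5$, integration, and inversion; your integrating-factor variant with $w=H-\underline{H}_+$ is in effect a self-contained proof of the comparison principle in this special case. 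What your route buys: it is elementary, and it surfaces the positivity of $H$, which is silently needed both to invert $1/H$ and for the black-box comparison argument to be applied cleanly; what the paper's route buys is brevity and uniformity, since the same appendix theorem is recycled for all the other bounds ($B_2,\dots,B_5$) throughout \S\ref{sec:3}. One small point to flag: you assume $H(0)=\beta>0$, which is not stated in the lemma itself but is part of the standing data assumptions (\eqref{eq:2.13}, or the hypothesis $\beta>0$ in Lemma~\ref{lemma:3.3}); the paper's proof relies on it equally, so this is not a gap so much as an inherited hypothesis worth stating explicitly.
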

\begin{proof}
	The assumption implies that $H$ satisfies, for all $t \in (0,T_0)$ 
	\begin{align*}
		\dot{H}>-5H^2  \AND
		H|_{t=0}=\beta.
	\end{align*}
	 Consider a comparison function $\underline{H}_+$ defined by   \textit{Riccati-type ODE}, 
	\begin{align}
		\dot{\underline{H}}_+  = -5\underline{H}_+^2 \AND
		\underline{H}_+|_{t=0} = \beta. \label{e:cpsl2}
	\end{align}  
 Theorem \ref{theorem:C} guarantees that $H(t) > \underline{H}_+(t)$ for all $t \in (0, T_0)$. Solving the Riccati-type ODE  \eqref{e:cpsl2} yields
	\begin{equation}
		H(t) > \underline{H}_+(t) = \frac{1}{5t + \frac{1}{\beta}}, \quad \text{for all } t \in (0,T_0), 
        \label{eq:underlineH+}
	\end{equation}
	which completes the proof.
\end{proof}

\begin{lemma}\label{lemma:3.3}
If $\beta>0$ and $B_1(0) > 0$, then $B_1(t) > 0$ for all $t \in (0, \mathcal{T}_+)$.
\end{lemma}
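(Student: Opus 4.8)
The plan is to argue by contradiction using a continuity (first-crossing) argument together with the power identity of Lemma~\ref{proposition:3.2++}. Since the local solution is $C^1$, the function $B_1 = \dot{H} + 5H^2$ is continuous on $(0,\mathcal{T}_+)$, and $B_1(0) > 0$ by hypothesis. Suppose the conclusion fails, and set
\[
T := \inf\{\, t \in (0,\mathcal{T}_+) : B_1(t) \le 0 \,\}.
\]
Because $B_1(0) > 0$ and $B_1$ is continuous, we have $T > 0$, while continuity together with the definition of the infimum forces $B_1(T) = 0$ and $B_1(t) > 0$ for all $t \in (0,T)$. The aim is to derive a contradiction at this first crossing time $T$.

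The next step is to pin down the sign of $H$ at $T$. Since $B_1 > 0$ on $(0,T)$, Lemma~\ref{lemma:3.2} (applied with $T_0 = T$) gives $H(t) > (5t + \tfrac{1}{\beta})^{-1}$ on $(0,T)$, so by continuity $H(T) \ge (5T + \tfrac{1}{\beta})^{-1} > 0$. In particular $H(T) \ne 0$, so Lemma~\ref{lemma:3.2++++} yields $\dot{\phi}(T) \ne 0$. I would then feed the relation $B_1(T) = 0$, i.e. $\dot{H}(T) = -5H^2(T)$, into $\mathcal{P} = 0$. Dividing by $H(T) > 0$ and substituting $\dot{H}/(3H^2) = -\tfrac{5}{3}$ and $\dot{H}/H^2 = -5$ at $t = T$ collapses the power identity to the purely quadratic relation
\[
\Bigl(2H^2 + \tfrac{2}{3}\Bigr)\dot{\phi}^2 - 8H^3 \phi\, \dot{\phi} + 48 H^6 \phi^2 = 0 \qquad \text{at } t = T.
\]

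The decisive point — and the step I expect to carry the weight — is that the left-hand side is a \emph{positive-definite} quadratic form in the variables $\dot{\phi}$ and $H^3\phi$. Its leading coefficient $2H^2 + \tfrac{2}{3}$ is positive, and its determinant is $(2H^2 + \tfrac{2}{3})\cdot 48 - 4^2 = 96H^2 + 16 > 0$; the key arithmetic fact is that the coefficient $48$ of $\phi^2$ produced by $1 + \dot{H}/H^2 = -4$ is large enough to dominate the cross term. Positive definiteness forces $\dot{\phi}(T) = 0$ and $H^3(T)\phi(T) = 0$, and since $H(T) \ne 0$ the latter also gives $\phi(T) = 0$. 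But $\dot{\phi}(T) = 0$ contradicts $\dot{\phi}(T) \ne 0$ established above, so no crossing time can exist and $B_1 > 0$ throughout $(0,\mathcal{T}_+)$. The only genuine subtlety is confirming the strict sign $H(T) > 0$ — supplied by Lemma~\ref{lemma:3.2} — since both the division of $\mathcal{P}$ by $H$ and the application of Lemma~\ref{lemma:3.2++++} require $H(T) \ne 0$.
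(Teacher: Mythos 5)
Your proposal is correct and follows essentially the same route as the paper: a first-crossing/contradiction argument, with Lemma~\ref{lemma:3.2} giving $H(T)>0$, Lemma~\ref{lemma:3.2++++} giving $\dot{\phi}(T)\neq 0$, and the power identity at the crossing time reduced to the quadratic form $\bigl(2H^2+\tfrac{2}{3}\bigr)\dot{\phi}^2-8H^3\phi\dot{\phi}+48H^6\phi^2$. The only cosmetic difference is that you verify positive definiteness via the discriminant criterion ($96H^2+16>0$), whereas the paper exhibits an explicit sum-of-squares decomposition $2H^2\dot{\phi}^2+2\bigl(\tfrac{\dot{\phi}}{\sqrt{3}}-2\sqrt{3}H^3\phi\bigr)^2+24H^6\phi^2$; both yield the same contradiction.
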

	\begin{proof}
		Since $B_1(0) > 0$ and $B_1(t)$ is continuous (by the definition \eqref{B_1} and Proposition \ref{t:locext}), there exists a constant $T\in(0, \mathcal{T}_+]$, such that $B_1(t)>0$ for all $t\in(0, T)$. Define
			\begin{equation}\label{e:Tmax}
			T_\text{max}:=\sup\{T\in(0,\mathcal{T}_+] \;|\;B_1(t)>0 \;\;\text{for all } t \in (0, T)\}.
			\end{equation}

		To prove the lemma, it suffices to show that $T_{\text{max}} = \mathcal{T}_+$. Suppose, for contradiction, that $T_{\text{max}} < \mathcal{T}_+$. Then because of the continuity of $B_1$, we must have $B_1(T_{\text{max}}) = 0$; otherwise, this would contradict the definition of $T_{\text{max}}$ in \eqref{e:Tmax}.

		By Lemma \ref{lemma:3.2}, the positivity of $B_1$ on $(0,T_{\text{max}})$ implies
		\begin{equation*}
			H(t) > \frac{1}{5t + \frac{1}{\beta}}, \quad \text{for all } t \in (0, T_{\text{max}}).
		\end{equation*} 
	Taking the limit as $t \to T^-_{\text{max}}$ and using continuity of $H$, we obtain
		\begin{equation*}
			H(T_{\text{max}}) \geq \frac{1}{5T_{\text{max}} + \frac{1}{\beta}} > 0,
		\end{equation*}
		where the last inequality follows from the fact that $T_{\text{max}} <\mathcal{T}_+\in(0,+\infty]$.

		Next, evaluate the power identity at $t = T_{\text{max}}$. By Lemma \ref{proposition:3.2++}, we have $\mathcal{P}(T_{\text{max}}) = 0$, and  taking $B_1(T_{\text{max}}) = 0$  into accounts yields 
		\begin{align}
			0=&\mathcal{P}(T_{\text{max}} )=\left. H \Biggl(\biggl(2H^2+\frac{2}{3}\biggr)\dot{\phi}^2 - 8H^3 \phi \dot{\phi}  + 48H^6 \phi^2 \Biggr) \right|_{t=T_{\text{max}} } \notag \\
			=&\left. H \Biggl( 2H^2 \dot{\phi} ^2 + 2\biggl(\frac{\dot{\phi} }{\sqrt{3}} - 2\sqrt{3}H^3 \phi \biggr)^2   
			+ 24H^6 \phi^2  \Biggl) \right|_{t=T_{\text{max}} } >0 . \label{e:0>0}
		\end{align}  
		Let us justify \eqref{e:0>0}. By Lemma \ref{lemma:3.2++++}, we have $\dot{\phi}(T_{\text{max}}) \neq 0$, so $H^2 \dot{\phi}^2 |_{t = T_{\text{max}}} > 0$  is strictly positive. Since all the remaining terms are manifestly non-negative and $H(T_{\text{max}}) > 0$, the entire expression is strictly positive, leading to the contradiction \eqref{e:0>0}. 
		Hence, our assumption that $T_{\text{max}} < \mathcal{T}_+$ must be false, and we conclude that $T_{\text{max}} = \mathcal{T}_+$. This completes the proof. 
	\end{proof}

\begin{lemma} \label{t:dataB}
	Under the initial conditions \eqref{eq:2.12}--\eqref{eq:2.11}, or alternatively \eqref{eq:1.13++++}--\eqref{1.14++++}, we have $B_1(0) > 0$.
\end{lemma}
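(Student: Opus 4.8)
The plan is to reduce the claim to a single evaluation of $\dot H$ at $t=0$ and then recognize that the resulting expression is \emph{exactly} the quantity $\kappa(\alpha,\beta)$ that appears in the definition of the admissible set. By the definition \eqref{B_1} together with $H(0)=\beta$, we have
\[
B_1(0)=\dot H(0)+5\beta^2,
\]
so the entire statement hinges on identifying $\dot H(0)$.

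First I would evaluate the explicit formula \eqref{eq-dotH} for $\dot H$ at $t=0$, substituting $H(0)=\beta$, $\phi(0)=\alpha$, and $\dot\phi(0)=\gamma$, where $\gamma$ is the positive root in \eqref{e:dotphi}, i.e.\ the one given by \eqref{eq:2.9++++} and selected precisely by the symmetry-breaking conditions \eqref{eq:2.11} (respectively \eqref{1.14++++}). Doing so makes the numerator equal to $-4\beta^3\alpha\gamma-\gamma^2+4\beta^2\gamma^2-24\beta^6\alpha^2-3\beta^2$ and the denominator equal to $2-8\beta\alpha\gamma+24\beta^4\alpha^2$. Comparing with \eqref{eq:2.10++++}, this is precisely $\kappa(\alpha,\beta)$, so $\dot H(0)=\kappa$ and therefore $B_1(0)=\kappa+5\beta^2$.

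The conclusion then follows immediately in each case. For the scalarization data \eqref{eq:1.13++++}, the set $\mathcal{A}$ is defined so that $-5\beta^2<\kappa$, whence $B_1(0)=\kappa+5\beta^2>0$. For the data of Theorem \ref{theorem:2.1}, where $\alpha=0$, I would either invoke the fact that such data lies in $\mathcal{A}$ (as recorded in the remark on non-emptiness) or compute directly: with $\alpha=0$ one has $\gamma=\sqrt{3}\,\beta$ and $\gamma^2=3\beta^2$, so $\kappa=6\beta^4-3\beta^2$, giving
\[
B_1(0)=6\beta^4-3\beta^2+5\beta^2=2\beta^2\bigl(3\beta^2+1\bigr)>0
\]
since $\beta>0$.

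There is no substantial obstacle here; the proof is essentially an algebraic identification. The only points requiring care are bookkeeping matters: ensuring that the positive root $\gamma$ (rather than the $\iota=1$ root) is used so that the evaluated $\dot H(0)$ matches the defined $\kappa$ sign-for-sign, and noting that the denominator $2-8\beta\alpha\gamma+24\beta^4\alpha^2$ is strictly positive by Lemma \ref{t:Dvalue}, so that the expression \eqref{eq-dotH} is well defined at $t=0$.
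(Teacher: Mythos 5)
Your proposal is correct and follows essentially the same route as the paper: evaluate $\dot H(0)$ via the explicit formula \eqref{eq-dotH}, identify it with $\kappa$ (so that \eqref{eq:1.13++++} gives $B_1(0)=\kappa+5\beta^2>0$), and for the $\alpha=0$ data of Theorem \ref{theorem:2.1} compute $\dot\phi(0)=\sqrt{3}\beta$, $\dot H(0)=6\beta^4-3\beta^2$, hence $B_1(0)=6\beta^4+2\beta^2>0$. The only difference is cosmetic: you unify both cases by observing that the $\alpha=0$ computation is the specialization of $\kappa$, while the paper treats the two data sets in parallel.
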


\begin{proof}
	We first verify that the initial conditions \eqref{eq:2.12}--\eqref{eq:2.11} imply $B_1(0) > 0$. From \eqref{eq:2.7} and \eqref{eq-dotH}, together with the initial data,  and noting that $\beta > 0$, we compute
	\begin{equation*}
		\dot{\phi}(0) = \sqrt{3}\beta,
	\end{equation*}
	and
	\begin{equation}
		\dot{H}(0) = 6\beta^4 - 3\beta^2.
		\label{eq:2.17++}
	\end{equation}
	Substituting these into the definition of $B_1$ in \eqref{B_1}, we find
	\begin{equation*}\label{e:B0.a}
		B_1(0) = \dot{H}(0) + 5H^2(0)  = 6\beta^4 + 2\beta^2 > 0.
	\end{equation*}
	
	Next, we verify that the alternative initial conditions \eqref{eq:1.13++++}--\eqref{1.14++++} also yield $B_1(0) > 0$. From \eqref{1.14++++} and \eqref{eq:2.7}, we have $\dot{\phi}(0) = \gamma > 0$. In addition, \eqref{eq-dotH} implies $\dot{H}(0) = \kappa$. Therefore, \eqref{eq:1.13++++} implies
	\begin{equation}\label{e:B0.b}
		B_1(0) = \dot{H}(0) + 5H^2(0) = \kappa + 5\beta^2 > 0.
	\end{equation}
	We conclude this lemma.
\end{proof}

\begin{proposition}
	\label{proposition:3.4}
	Under the initial conditions \eqref{eq:2.12}--\eqref{eq:2.11}, or alternatively \eqref{eq:1.13++++}--\eqref{1.14++++},  we have
	\begin{equation*}
		H(t) > \frac{1}{5t + \frac{1}{\beta}}, \quad \text{for all } t \in (0, \mathcal{T}_+).
	\end{equation*}
\end{proposition}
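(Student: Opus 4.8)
The plan is to chain together the three preceding results---Lemma~\ref{t:dataB}, Lemma~\ref{lemma:3.3}, and Lemma~\ref{lemma:3.2}---since each supplies exactly one link of the implication. The genuine analytic content has already been extracted in those lemmas (in particular, the contradiction argument against the power identity $\mathcal{P}=0$ carried out in Lemma~\ref{lemma:3.3}), so at this stage only a short synthesis is required.

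First I would record that in both admissible data regimes the parameter $\beta$ is strictly positive: under \eqref{eq:2.13} we have $\beta\in(0,\tfrac{\sqrt3}{3})$, and under \eqref{eq:1.13++++} the set $\mathcal{A}$ again forces $\beta\in(0,\tfrac{\sqrt3}{3})$, so in particular $\beta>0$ in either case. Next, Lemma~\ref{t:dataB} already establishes that each of the two sets of initial conditions, \eqref{eq:2.12}--\eqref{eq:2.11} or \eqref{eq:1.13++++}--\eqref{1.14++++}, yields $B_1(0)>0$. With $\beta>0$ and $B_1(0)>0$ in hand, Lemma~\ref{lemma:3.3} upgrades this pointwise positivity at $t=0$ to the global statement $B_1(t)>0$ for all $t\in(0,\mathcal{T}_+)$.

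It then remains to convert the sign of $B_1$ into the desired Riccati-type lower bound via Lemma~\ref{lemma:3.2}. The only point requiring a little care is that Lemma~\ref{lemma:3.2} is phrased on a finite window $(0,T_0)$, whereas the maximal forward existence time $\mathcal{T}_+$ may equal $+\infty$. I would handle this by fixing an arbitrary $T_0\in(0,\mathcal{T}_+)$; since $B_1>0$ holds on the larger interval $(0,\mathcal{T}_+)$, it holds a fortiori on $(0,T_0)$, so Lemma~\ref{lemma:3.2} gives $H(t)>\tfrac{1}{5t+1/\beta}$ for all $t\in(0,T_0)$. Because $T_0$ was arbitrary in $(0,\mathcal{T}_+)$, taking the union over all such $T_0$ yields the bound on the full interval $(0,\mathcal{T}_+)$, which is the claim.

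The main obstacle is essentially absent here: the nontrivial work lives upstream in Lemma~\ref{lemma:3.3}, where the contradiction against $\mathcal{P}=0$---using $\dot\phi(T_{\max})\neq0$ from Lemma~\ref{lemma:3.2++++} together with the manifest non-negativity of the remaining quadratic terms---does the heavy lifting. Whatever subtlety survives at the level of this proposition is purely the bookkeeping over the possibly-infinite $\mathcal{T}_+$, which the union argument above dispatches cleanly.
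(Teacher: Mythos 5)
Your proposal is correct and follows essentially the same route as the paper: the paper's proof likewise just invokes Lemma~\ref{t:dataB} to get $B_1(0)>0$ and then cites Lemmas~\ref{lemma:3.3} and~\ref{lemma:3.2}. Your extra care about the possibly infinite $\mathcal{T}_+$ (exhausting $(0,\mathcal{T}_+)$ by finite windows $(0,T_0)$) is a harmless refinement of a detail the paper passes over silently.
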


\begin{proof}
	 Lemma \ref{t:dataB} implies $B_1(0)>0$, the result then follows directly from Lemmas \ref{lemma:3.2} and \ref{lemma:3.3}.
\end{proof}

	\subsubsection{Lower Bound of $H$ for $t \in (\mathcal{T}_-, 0)$}\label{s:Hlrbd1}
	We now derive a lower bound for $H$ on the interval $t \in (\mathcal{T}_-, 0)$.  Define $B_2(t)$ as
    \begin{equation}
          B_2:=\dot{H}-6H^4-6H^2.
          \label{B_2}
    \end{equation}
	
	\begin{lemma}
		\label{lemma:3.6++}
		If there exists a constant $T_0 < 0$ such that $B_2(t) < 0$ for all $t \in (T_0, 0)$, then
		\begin{equation*}
			H(t) > \frac{1}{\mathtt{S}^{-1}\left(\mathtt{S}(1/\beta) - 6t\right)} > \frac{1}{\frac{1}{\beta}  - 6t}, \quad \text{for all } t \in (T_0,0),
		\end{equation*}
		where the transcendental function
		$ \mathtt{S}(x) = x + \arctan x$
		is strictly increasing and therefore invertible.
	\end{lemma}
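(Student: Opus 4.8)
The plan is to turn the hypothesis into a scalar first-order differential inequality for $H$, compare it against the exact solution of the associated Riccati-type ODE, and integrate that ODE in closed form through the function $\mathtt{S}$. The decisive trick is to pass to the reciprocal variable $u:=1/H$, which is exactly what makes the comparison equation explicitly integrable via $\mathtt{S}$.

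First I would secure the positivity of $H$ on $(T_0,0)$, since this is needed to invert $H$. Because $H(0)=\beta>0$ and $H$ is continuous, $H>0$ holds on a maximal subinterval $(T_\ast,0)\subseteq(T_0,0)$; on this subinterval the lower bound derived below is strictly positive, so a continuity/bootstrap argument in the spirit of Lemma~\ref{lemma:3.3} forces $T_\ast=T_0$. With $H>0$ in hand, set $u:=1/H$, so $\dot u=-\dot H/H^2$, and the hypothesis $B_2<0$ furnishes the differential inequality $\dot H<\frac{6H^4+6H^2}{2H^2+1}$, which becomes
\begin{equation*}
\dot u > -\frac{6(1+u^2)}{2+u^2}, \qquad u(0)=\frac1\beta .
\end{equation*}

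Next I introduce the comparison function $\underline u$ solving the companion equation with equality,
\begin{equation*}
\dot{\underline u}=-\frac{6(1+\underline u^2)}{2+\underline u^2}, \qquad \underline u(0)=\frac1\beta ,
\end{equation*}
and apply the comparison theorem (Theorem~\ref{theorem:C}) in its backward form on $(T_0,0)$: since $u$ is a strict supersolution matching $\underline u$ at $t=0$, running time forward in $s=-t$ yields $u(t)<\underline u(t)$, hence $H(t)>1/\underline u(t)$, for all $t\in(T_0,0)$. Solving the companion ODE by separation of variables and using $\frac{2+u^2}{1+u^2}=1+\frac{1}{1+u^2}$, whose antiderivative is precisely $\mathtt{S}(u)=u+\arctan u$, gives the implicit relation $\mathtt{S}(\underline u)=\mathtt{S}(1/\beta)-6t$. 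Since $\mathtt{S}'(x)=1+\frac{1}{1+x^2}>0$, the map $\mathtt{S}$ is a strictly increasing bijection, so $\underline u=\mathtt{S}^{-1}(\mathtt{S}(1/\beta)-6t)$ is well defined and I obtain the first claimed bound $H(t)>\bigl(\mathtt{S}^{-1}(\mathtt{S}(1/\beta)-6t)\bigr)^{-1}$.

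Finally, the second inequality reduces to $\mathtt{S}^{-1}(\mathtt{S}(1/\beta)-6t)<\frac1\beta-6t$ for $t<0$; applying the increasing map $\mathtt{S}$, this is equivalent to $\mathtt{S}(1/\beta)-6t<(\frac1\beta-6t)+\arctan(\frac1\beta-6t)$, i.e. to $\arctan(1/\beta)<\arctan(\frac1\beta-6t)$, which holds because $-6t>0$ and $\arctan$ is increasing. The step I expect to be most delicate is orienting the comparison correctly for the backward problem $t<0$ while simultaneously maintaining $H>0$, so that the substitution $u=1/H$ stays valid across the entire interval.
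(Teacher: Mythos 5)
Your argument has a genuine gap, and it occurs at the decisive first step. The paper defines $B_2:=\dot{H}-6H^4-6H^2$ in \eqref{B_2}, so the hypothesis $B_2<0$ gives only
\begin{equation*}
\dot{H}<6H^4+6H^2,
\end{equation*}
whereas you assert that it ``furnishes'' $\dot{H}<\frac{6H^4+6H^2}{2H^2+1}$. Since $2H^2+1>1$, your inequality is strictly \emph{stronger} than the hypothesis and does not follow from it; the denominator $2+u^2$ in your reciprocal inequality $\dot u>-\frac{6(1+u^2)}{2+u^2}$ has no source in the problem. The correct consequence of $B_2<0$ in the variable $u=1/H$ is $\dot u>-\frac{6(1+u^2)}{u^2}$, and separating variables in the associated comparison ODE one integrates $\bigl(1-\frac{1}{1+u^2}\bigr)\,du$, whose primitive is $\mathtt{T}(u):=u-\arctan u$, \emph{not} $\mathtt{S}(u)=u+\arctan u$. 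The comparison theorem then yields only $H(t)>1/\mathtt{T}^{-1}\bigl(\mathtt{T}(1/\beta)-6t\bigr)$. This bound is strictly weaker than both inequalities claimed in Lemma \ref{lemma:3.6++}: applying the increasing map $\mathtt{T}$ (exactly as you apply $\mathtt{S}$ at the end of your argument) shows $\mathtt{T}^{-1}\bigl(\mathtt{T}(1/\beta)-6t\bigr)>\frac{1}{\beta}-6t$ for $t<0$, so the corrected comparison function lies \emph{below} $\frac{1}{\frac{1}{\beta}-6t}$, not above it. Hence your route, once the unjustified inequality is repaired, does not prove the statement; your wrong differential inequality is precisely what is needed to force the $\mathtt{S}$-formula to come out.

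That said, your misstep mirrors an error in the paper's own proof, which uses the same comparison strategy but works directly with $H$: from $\bigl(\frac{1}{\underline{\mathcal{H}}_-^2}-\frac{1}{\underline{\mathcal{H}}_-^2+1}\bigr)\dot{\underline{\mathcal{H}}}_-=6$ the paper records the primitive as $\frac{1}{\underline{\mathcal{H}}_-}+\arctan\frac{1}{\underline{\mathcal{H}}_-}$ in \eqref{eq:3.28++}, but the correct primitive is $-\frac{1}{\underline{\mathcal{H}}_-}-\arctan\underline{\mathcal{H}}_-$, which for $\underline{\mathcal{H}}_->0$ equals $-\bigl(\frac{1}{\underline{\mathcal{H}}_-}-\arctan\frac{1}{\underline{\mathcal{H}}_-}\bigr)-\frac{\pi}{2}$; the true implicit relation is therefore $\mathtt{T}(1/\underline{\mathcal{H}}_-)=\mathtt{T}(1/\beta)-6t$. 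So neither your proposal nor the printed proof establishes the two displayed inequalities; what the comparison argument genuinely gives is the positive lower bound $H(t)>1/\mathtt{T}^{-1}\bigl(\mathtt{T}(1/\beta)-6t\bigr)>0$ on $(T_0,0)$. Fortunately, this weaker bound is all that is used downstream: in Lemma \ref{lemma:3.7++} only the strict positivity of $H(T_{\mathrm{min}})$ matters. The fix is thus to weaken the statement of Lemma \ref{lemma:3.6++} (replace $\mathtt{S}$ by $\mathtt{T}$ and drop the comparison with $\frac{1}{\frac{1}{\beta}-6t}$), not to change your overall strategy, whose remaining steps (the positivity bootstrap, the orientation of the backward comparison, and the monotonicity argument for the final inequality) are sound.
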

	
	\begin{proof}
		The proof is similar to that of Lemma \ref{lemma:3.2}. Define a comparison function $\underline{\mathcal{H}}_-(t)$ satisfying the initial value problem
		\begin{equation}
			\dot{\underline{\mathcal{H}}}_- = 6\underline{\mathcal{H}}_-^4 + 6\underline{\mathcal{H}}_-^2, \quad \underline{\mathcal{H}}_-(0) = \beta.
			\label{eq:3.25++}
		\end{equation}
		The comparison theorem (Theorem \ref{theorem:C}) ensures that $H(t)>\underline{\mathcal{H}}_-(t)$. 
		
		We now solve the ODE \eqref{eq:3.25++}. Rewriting it yields
		\begin{equation*}
		           \left(\frac{1}{\underline{\mathcal{H}}_-^2} - \frac{1}{\underline{\mathcal{H}}_-^2 + 1}\right) \dot{\underline{\mathcal{H}}}_- = 6. 
		\end{equation*}
        Integrating this equation with the initial data, we get
		\begin{equation}
			\frac{1}{\underline{\mathcal{H}}_-(t)} + \arctan{\frac{1}{\underline{\mathcal{H}}_-(t)}} = -6t + \frac{1}{\beta} + \arctan{\frac{1}{\beta}}.
			\label{eq:3.28++}
		\end{equation}
		Hence,
        \begin{equation*}
            	\underline{\mathcal{H}}_-(t) = \frac{1}{\mathtt{S}^{-1}\left(\mathtt{S}(1/\beta) - 6t\right)}.
        \end{equation*}
	
		To estimate a lower bound for $\underline{\mathcal{H}}_-(t)$, observe that since $\mathtt{S}$ is strictly increasing for all $t \in (T_0, 0)$, so is its inverse $\mathtt{S}^{-1}$ and $\underline{\mathcal{H}}_-(t)$. Furthermore, we have
        \begin{equation*}
            		\arctan{\frac{1}{\underline{\mathcal{H}}_-(t)}} > \arctan{\frac{1}{\underline{\mathcal{H}}_-(0)}} = \arctan{\frac{1}{\beta}}.
        \end{equation*}
		Substituting this into \eqref{eq:3.28++} yields
		\begin{equation*}
			H(t) > \underline{\mathcal{H}}_-(t) > \frac{1}{\frac{1}{\beta} - 6t}, \quad \text{for all } t \in (T_0, 0).
		\end{equation*}
        This completes the proof. 
	\end{proof}

\begin{lemma}
	\label{lemma:3.7++}
   
	If $B_2(0)<0$ and $\beta>0$, we have $B_2(t) < 0,  \text{for all } t \in (\mathcal{T}_-,0).$
	
\end{lemma}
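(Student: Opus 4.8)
The plan is to mirror the contradiction-and-continuation argument of Lemma~\ref{lemma:3.3}, now run backward in time with Lemma~\ref{lemma:3.6++} playing the role that Lemma~\ref{lemma:3.2} played there. Since $B_2(0) < 0$ and $B_2$ is continuous (by its definition \eqref{B_2} and Proposition~\ref{t:locext}), there is an interval $(T,0)$ on which $B_2 < 0$. I would set
\begin{equation*}
	T_{\min} := \inf\bigl\{ T \in (\mathcal{T}_-, 0) \;\big|\; B_2(s) < 0 \text{ for all } s \in (T,0) \bigr\},
\end{equation*}
and aim to prove $T_{\min} = \mathcal{T}_-$. Suppose instead $T_{\min} > \mathcal{T}_-$; then $T_{\min}$ is finite and interior, so continuity of $B_2$ forces $B_2(T_{\min}) = 0$, for otherwise one could extend the interval of negativity past $T_{\min}$, contradicting the infimum.

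Next I would upgrade this to positivity of $H$ at the endpoint. Because $B_2 < 0$ throughout $(T_{\min}, 0)$, Lemma~\ref{lemma:3.6++} yields $H(t) > \tfrac{1}{1/\beta - 6t} > 0$ on that interval; letting $t \to T_{\min}^+$ and using continuity of $H$ together with $T_{\min} < 0$ (so $1/\beta - 6 T_{\min} > 0$) gives $H(T_{\min}) > 0$.

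The crux is to evaluate the power identity of Lemma~\ref{proposition:3.2++} at $t = T_{\min}$. The relation $B_2(T_{\min}) = 0$ reads $\dot{H} = 6H^4 + 6H^2$ there, whence $\tfrac{\dot{H}}{3H^2} = 2H^2 + 2$ and $\tfrac{\dot{H}}{H^2} = 6H^2 + 6$. Substituting these into $\mathcal{P}$ collapses the coefficients and produces
\begin{equation*}
	0 = \mathcal{P}(T_{\min}) = H\Bigl( -3\dot{\phi}^2 - 8H^3 \phi \dot{\phi} - 12H^6\bigl(7 + 6H^2\bigr)\phi^2 \Bigr)\Big|_{t = T_{\min}}.
\end{equation*}
Completing the square rewrites the bracket as
\begin{equation*}
	-3\Bigl( \dot{\phi} + \tfrac{4}{3}H^3 \phi \Bigr)^2 - \Bigl( \tfrac{236}{3} + 72 H^2 \Bigr) H^6 \phi^2,
\end{equation*}
a manifestly negative-definite quadratic form in $(\dot{\phi}, \phi)$ whenever $H \neq 0$. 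Since $H(T_{\min}) > 0$ implies $\dot{\phi}(T_{\min}) \neq 0$ by Lemma~\ref{lemma:3.2++++}, the first square is strictly positive, so the bracket is strictly negative; multiplying by $H(T_{\min}) > 0$ gives $\mathcal{P}(T_{\min}) < 0$, contradicting $\mathcal{P}(T_{\min}) = 0$. Hence $T_{\min} = \mathcal{T}_-$ and $B_2 < 0$ on all of $(\mathcal{T}_-, 0)$.

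The main obstacle I anticipate is purely algebraic: carefully carrying the substitution $\dot{H} = 6H^4 + 6H^2$ through the power identity and confirming that the completed-square remainder coefficient $\tfrac{236}{3} + 72H^2$ is strictly positive, so that the quadratic form is genuinely negative-definite and cannot vanish once $\dot\phi(T_{\min}) \neq 0$. Beyond this bookkeeping, the argument is a sign-reversed transcription of Lemma~\ref{lemma:3.3}, with the forward continuation replaced by a backward one and Lemma~\ref{lemma:3.6++} supplying the auxiliary positivity of $H$.
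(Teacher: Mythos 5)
Your proposal reproduces the paper's argument almost step for step: the same $T_{\min}$ continuation-by-contradiction, the same use of Lemma~\ref{lemma:3.6++} to secure $H(T_{\min})>0$, and the same evaluation of the power identity of Lemma~\ref{proposition:3.2++} at $T_{\min}$ with $\dot H = 6H^4+6H^2$, which indeed yields the bracket $-3\dot\phi^2 - 8H^3\phi\dot\phi - 84H^6\phi^2 - 72H^8\phi^2$. The algebra of your completed square is also correct. However, the step where you conclude strict negativity contains a genuine logical slip: you assert that $\dot\phi(T_{\min})\neq 0$ makes ``the first square strictly positive,'' i.e.\ $\bigl(\dot\phi+\tfrac{4}{3}H^3\phi\bigr)^2>0$. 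That inference is invalid: nothing in the hypotheses excludes the configuration $\dot\phi = -\tfrac{4}{3}H^3\phi \neq 0$ at $T_{\min}$, in which case your first square vanishes even though $\dot\phi\neq 0$.

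The conclusion is rescued by the other claim you make, namely that the form is negative-definite in $(\dot\phi,\phi)$: if
\begin{equation*}
-3\Bigl(\dot\phi+\tfrac{4}{3}H^3\phi\Bigr)^2-\Bigl(\tfrac{236}{3}+72H^2\Bigr)H^6\phi^2=0
\end{equation*}
with $H(T_{\min})\neq 0$, then \emph{both} non-positive terms must vanish, forcing $\phi(T_{\min})=0$ and then $\dot\phi(T_{\min})=-\tfrac{4}{3}H^3\phi(T_{\min})=0$, contradicting Lemma~\ref{lemma:3.2++++}. So the repair is one line, but it must run through ``both squares vanish simultaneously,'' not through positivity of the first square alone. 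The paper avoids this subtlety entirely by choosing a decomposition that splits off a pure $\dot\phi^2$ term,
\begin{equation*}
-3\dot{\phi}^2 - 8H^3 \phi \dot{\phi} - 84H^6 \phi^2 - 72H^8 \phi^2
= -2\dot{\phi}^2 - \bigl(\dot{\phi} + 4H^3 \phi\bigr)^2 - 68H^6 \phi^2 - 72H^8 \phi^2,
\end{equation*}
so that $\dot\phi(T_{\min})\neq 0$ immediately makes $-2\dot\phi^2$ strictly negative and all remaining terms are manifestly non-positive. You should either adopt that decomposition or replace your positivity claim with the simultaneous-vanishing argument above.
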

	 
\begin{proof}
	The proof is similar to that of Lemma \ref{lemma:3.3}. 
    Since $ B_2(0) < 0 $ and $ B_2(t) $ is continuous (by Definition \eqref{B_2} and Proposition \ref{t:locext}), there exists a constant $ T \in [\mathcal{T}_-, 0) $ such that $ B_2(t) < 0 $ for all $ t \in (T, 0) $. Define  
    \begin{equation}\label{e:Tmin+}
        T_{\text{min}} := \inf\{T \in [\mathcal{T}_-, 0) \mid B_2(t) < 0 \text{ for all } t \in (T, 0)\}.
    \end{equation}

 As in the proof of Lemma \ref{lemma:3.3}, we prove $ T_{\text{min}} = \mathcal{T}_- $ by contradiction. Suppose $ T_{\text{min}} > \mathcal{T}_- $. By the continuity of $ B_2 $, we must have $ B_2(T_{\text{min}}) = 0 $; otherwise, this would contradict the definition of $ T_{\text{min}} $ in \eqref{e:Tmin+}.

By Lemma \ref{lemma:3.6++}, the negativity of $B_2$ on  $(T_{\text{min}}, 0)$ implies  
\begin{equation*}
	H(t) > \frac{1}{\frac{1}{\beta} - 6t}, \quad \text{for all } t \in (T_{\text{min}}, 0).
\end{equation*}
Taking the limit as $t \to T_{\text{min}}^+$ and using the continuity of $H$, we obtain
\begin{equation*}
	H(T_{\text{min}}) \geq \frac{1}{\frac{1}{\beta} - 6T_{\text{min}}} > 0,
\end{equation*}
where the denominator is strictly positive, since $T_{\text{min}} < 0$ and cannot reach infinity due to the fact that $T_{\text{min}} > \mathcal{T}_{-} \in [-\infty, 0)$.

    Evaluating Lemma \ref{proposition:3.2++} at $ t = T_{\text{min}} $ and substituting $ B_2(T_{\text{min}}) = 0 $, we have  
    \begin{align}
        0 &= \mathcal{P}(T_{\text{min}})  =  H \bigl( -3\dot{\phi}^2 - 8H^3 \phi \dot{\phi} - 84H^6 \phi^2 - 72H^8 \phi^2 \bigr) \big|_{t=T_{\text{min}}} \notag \\
        &=   H \bigl( -2\dot{\phi}^2 - (\dot{\phi} + 4H^3 \phi)^2 - 68H^6 \phi^2 - 72H^8 \phi^2 \bigr) \big|_{t=T_{\text{min}}}. \label{eq:contradiction}
    \end{align}  

    We now analyze the sign of \eqref{eq:contradiction}. Lemma \ref{lemma:3.2++++} implies $ \dot{\phi}(T_{\text{min}}) \neq 0 $, so $ -2\dot{\phi}^2(T_{\text{min}}) < 0 $.  
        Meanwhile, all the other terms are non-positive, and $ H(T_{\text{min}}) > 0 $.   
    Thus, the right-hand side of \eqref{eq:contradiction} is strictly negative, leading to the contradiction $ 0 < 0 $.   
    Therefore, $ T_{\text{min}} = \mathcal{T}_- $, completing the proof.  
\end{proof}

\begin{proposition}
	\label{proposition:3.8++}
	Under the initial conditions \eqref{eq:2.12}--\eqref{eq:2.11}, we have
	\begin{equation*}
		H(t) > \frac{1}{\frac{1}{\beta} - 6t}, \quad \text{for all } t \in (\mathcal{T}_-, 0).
	\end{equation*} 
\end{proposition}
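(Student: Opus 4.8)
The plan is to mirror exactly the three-step structure used to prove Proposition~\ref{proposition:3.4} for the forward direction, now using the auxiliary function $B_2$ and its associated Lemmas~\ref{lemma:3.6++} and~\ref{lemma:3.7++}. The two substantive lemmas are already in hand: Lemma~\ref{lemma:3.7++} propagates the sign condition $B_2 < 0$ from $t = 0$ backward across the entire interval $(\mathcal{T}_-, 0)$, and Lemma~\ref{lemma:3.6++} converts that sign condition into the desired lower bound. Thus the only genuinely new ingredient is to verify the initial-time hypothesis $B_2(0) < 0$, which here plays the role that Lemma~\ref{t:dataB} played in the forward proof.

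First I would compute $B_2(0)$ using the initial-data values already established under the conditions \eqref{eq:2.12}--\eqref{eq:2.11}. In the proof of Lemma~\ref{t:dataB} it was shown that, with $\alpha = 0$ and $\dot\phi(0) > 0$, one has $\dot\phi(0) = \sqrt{3}\beta$ and, via \eqref{eq:2.17++}, $\dot{H}(0) = 6\beta^4 - 3\beta^2$. Substituting these together with $H(0) = \beta$ into the definition \eqref{B_2} of $B_2$ gives
\begin{equation*}
    B_2(0) = \dot{H}(0) - 6H^4(0) - 6H^2(0) = (6\beta^4 - 3\beta^2) - 6\beta^4 - 6\beta^2 = -9\beta^2.
\end{equation*}
Since $\beta > 0$ by \eqref{eq:2.13}, this yields $B_2(0) = -9\beta^2 < 0$.

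With $B_2(0) < 0$ and $\beta > 0$ verified, Lemma~\ref{lemma:3.7++} immediately gives $B_2(t) < 0$ for all $t \in (\mathcal{T}_-, 0)$. Feeding this into Lemma~\ref{lemma:3.6++} then produces the claimed bound $H(t) > \frac{1}{\frac{1}{\beta} - 6t}$ throughout $(\mathcal{T}_-, 0)$, which completes the argument.

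I expect no real obstacle here: the heavy lifting---the Riccati-type comparison in Lemma~\ref{lemma:3.6++} and the continuation-by-contradiction against the power identity in Lemma~\ref{lemma:3.7++}---has already been carried out. The only point requiring care is that the reused formula $\dot{H}(0) = 6\beta^4 - 3\beta^2$ is valid precisely because $\alpha = 0$ forces $\dot\phi(0) = \sqrt{3}\beta$; for the broader data set \eqref{eq:1.13++++} one would instead have $\dot{H}(0) = \kappa$, so that $B_2(0) = \kappa - 6\beta^4 - 6\beta^2$, whose sign would have to be checked separately. This is consistent with the proposition being stated only for the narrower conditions \eqref{eq:2.12}--\eqref{eq:2.11}, for which the backward estimate is needed.
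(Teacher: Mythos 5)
Your proposal is correct and follows exactly the paper's own proof: verify $B_2(0) = -9\beta^2 < 0$ via \eqref{eq:2.17++}, propagate the sign with Lemma~\ref{lemma:3.7++}, and convert it into the bound with Lemma~\ref{lemma:3.6++}. The extra remark about why the argument is restricted to the data \eqref{eq:2.12}--\eqref{eq:2.11} is a sensible observation but not needed for the proof itself.
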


\begin{proof}
	As shown in Lemma \ref{t:dataB}, the initial conditions \eqref{eq:2.12}--\eqref{eq:2.11} imply \eqref{eq:2.17++}. Substituting \eqref{eq:2.17++} into the definition of $B_2$ in \eqref{B_2}, and noting that $\beta > 0$, we obtain $B_2(0) = -9\beta^2 < 0$. 
	The result then follows directly from Lemmas \ref{lemma:3.6++} and \ref{lemma:3.7++}.
\end{proof}

\subsubsection{Upper bound of $H$ for $t \in (0,\mathcal{T}_+)$}

We now establish an upper bound for $H$ on the interval $(0, \mathcal{T}_+)$.  Define $B_3(t)$ as
\begin{equation}
      B_3:=\dot{H}+H^2.
      \label{B_3}
\end{equation}

\begin{lemma}
	\label{lemma:3.1}
	If $\dot{\phi}(0) > 0$,  the solution $\phi$ satisfies
	\begin{equation} 
		\dot{\phi}(t) = -6 \phi(t) H^3(t) + \sqrt{\left(6\phi(t) H^3(t)\right)^2 + 3 H^2(t)}>0
		\label{eq:3.4}
	\end{equation}
	for all $t \in (\mathcal{T}_-,\mathcal{T}_+)$. 
\end{lemma}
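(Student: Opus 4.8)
The plan is to combine the propagation of the Hamiltonian constraint with a continuity (intermediate value) argument, using the already-established positivity of $H$. Because \eqref{e:dtctnt} shows that the constraint \eqref{eq:2.5++} is preserved along the flow, at every $t\in(\mathcal{T}_-,\mathcal{T}_+)$ the value $\dot\phi(t)$ must be one of the two roots of the quadratic $x^2+12H^3\phi\,x-3H^2=0$, namely
\begin{equation*}
	\dot\phi_\pm(t)=-6\phi H^3\pm\sqrt{\left(6\phi H^3\right)^2+3H^2},
\end{equation*}
as recorded in \eqref{e:dotphi}. A direct inspection gives $\dot\phi_-(t)\le 0\le\dot\phi_+(t)$, and both roots vanish simultaneously only when $H=0$; equivalently, the two branches are strictly separated wherever $H\neq 0$. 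Since the local solution is $C^1$, the function $t\mapsto\dot\phi(t)$ is continuous, so the only way it can leave the positive branch is by passing through $0$, which is possible only at a zero of $H$.

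Concretely, I would proceed as follows. First, invoke Proposition \ref{proposition:3.4} (for $t\in(0,\mathcal{T}_+)$) and Proposition \ref{proposition:3.8++} (for $t\in(\mathcal{T}_-,0)$), together with $H(0)=\beta>0$, to conclude $H(t)>0$ for all $t\in(\mathcal{T}_-,\mathcal{T}_+)$; note that these propositions are proved independently of the present lemma, so there is no circularity. Second, apply Lemma \ref{lemma:3.2++++}, which states that $H(t)\neq 0$ forces $\dot\phi(t)\neq 0$; hence $\dot\phi$ does not vanish anywhere on $(\mathcal{T}_-,\mathcal{T}_+)$. Third, since $\dot\phi$ is continuous, nowhere zero, and satisfies $\dot\phi(0)>0$, the intermediate value theorem yields $\dot\phi(t)>0$ throughout the interval. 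Finally, among the two admissible roots only $\dot\phi_+$ can be positive (because $\dot\phi_-\le 0<\dot\phi(t)$), so $\dot\phi(t)=\dot\phi_+(t)$, which is precisely the identity \eqref{eq:3.4}; the strict positivity is already in hand.

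The argument is largely soft, and the only delicate point is securing $H(t)>0$ on the entire existence interval rather than on a subinterval: this is exactly what reduces the problem, via the constraint, to excluding zeros of $\dot\phi$. I expect the main (minor) obstacle to be bookkeeping about the time interval under the two different data sets — Propositions \ref{proposition:3.4} and \ref{proposition:3.8++} cover $(0,\mathcal{T}_+)$ and $(\mathcal{T}_-,0)$ respectively under the hypotheses of Theorem \ref{theorem:2.1}, while under the broader data of Theorem \ref{theorem:1.2} one only needs (and has) positivity of $H$ for $t>0$. One should also remark that no tracking of the discrete branch index $\iota$ is needed, since Lemma \ref{lemma:3.2++++} guarantees non-vanishing of $\dot\phi$ regardless of which root the solution realizes.
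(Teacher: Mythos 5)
Your proposal is correct and follows essentially the same route as the paper: both establish $H>0$ on the whole existence interval via Propositions \ref{proposition:3.4} and \ref{proposition:3.8++}, invoke Lemma \ref{lemma:3.2++++} (the paper uses the sign dichotomy of the two branches from its proof, you use non-vanishing of $\dot\phi$ plus the intermediate value theorem), and then use continuity together with $\dot{\phi}(0)>0$ to lock the solution onto the $\iota=0$ branch of \eqref{e:dotphi}. The only difference is cosmetic ordering of these steps, and your remark about the data bookkeeping matches the paper's implicit usage.
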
 
\begin{proof} 
	Propositions \ref{proposition:3.4} and \ref{proposition:3.8++} ensure  $H(t) > 0$ for all $t \in (\mathcal{T}_-, \mathcal{T}_+)$. From the proof of Lemma \ref{lemma:3.2++++} and  \eqref{e:dotphi},  we obtain for $t \in (\mathcal{T}_-, \mathcal{T}_+)$, 
	\begin{equation}\label{e:dphisgn}
		\dot{\phi}(t)
		\begin{cases}
			>0  ,\quad \text{for } \iota=0, \\
			<0 , \quad \text{for } \iota=1 . 
		\end{cases}
	\end{equation} 
	Since $\dot{\phi} \in C^1((\mathcal{T}_-, \mathcal{T}_+))$, the local existence theorem (Theorem \ref{t:locext}) and $\dot{\phi}(0) > 0$ guarantee that there exists a constant $T > 0$ such that $\dot{\phi}(t) > 0$ for $t \in (-T, T)$. 
	Therefore, $\dot{\phi}$ must be given by the $\iota = 0$ branch of \eqref{e:dotphi} throughout $(\mathcal{T}_-, \mathcal{T}_+)$; otherwise, a sign change would contradict the continuity of $\dot{\phi}$ implied by \eqref{e:dphisgn}. This completes the proof. 
\end{proof}

\begin{lemma}\label{lem-sign-phi}
	Under the initial conditions \eqref{eq:2.12} and \eqref{eq:2.11}, we have
	\begin{equation*}
		\phi(t) > 0, \quad \text{for all } t \in (0,\mathcal{T}_+).
	\end{equation*}
	\begin{equation*}
		\phi(t) < 0, \quad \text{for all } t \in (\mathcal{T}_-,0).
	\end{equation*}
\end{lemma}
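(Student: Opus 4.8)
The plan is to reduce the entire question about the sign of $\phi$ to the sign of $\dot\phi$, which has already been pinned down. The hypotheses \eqref{eq:2.12} and \eqref{eq:2.11} supply exactly $\phi(0) = \alpha = 0$ together with $\dot\phi(0) > 0$, and these are precisely the initial conditions under which Propositions \ref{proposition:3.4} and \ref{proposition:3.8++} guarantee $H(t) > 0$ on the whole maximal interval $(\mathcal{T}_-, \mathcal{T}_+)$. Consequently Lemma \ref{lemma:3.1} applies verbatim, and the role of this lemma is merely to convert the already-established positivity of $\dot\phi$ into a statement about $\phi$ itself.

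First I would invoke Lemma \ref{lemma:3.1} to conclude that, since $\dot\phi(0) > 0$, the derivative $\dot\phi$ stays on the $\iota = 0$ branch of \eqref{e:dotphi} for all $t \in (\mathcal{T}_-, \mathcal{T}_+)$, so that
\[
\dot\phi(t) = -6\phi(t)H^3(t) + \sqrt{\left(6\phi(t)H^3(t)\right)^2 + 3H^2(t)} > 0,
\]
the strict positivity following from $H(t) > 0$. In other words, $\phi$ is strictly increasing throughout its entire interval of existence.

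Finally I would combine strict monotonicity with the initial value $\phi(0) = \alpha = 0$ and split according to the time direction. For $t \in (0, \mathcal{T}_+)$ monotonicity gives $\phi(t) > \phi(0) = 0$, while for $t \in (\mathcal{T}_-, 0)$ it gives $\phi(t) < \phi(0) = 0$, which is exactly the pair of assertions to be proved. I expect no genuine obstacle in this lemma: all of the substantive work—namely the positivity of $H$ on the maximal interval and the impossibility of a sign change in $\dot\phi$—was already carried out in Propositions \ref{proposition:3.4}, \ref{proposition:3.8++} and in Lemma \ref{lemma:3.1}. The only point deserving a moment's care is confirming that the hypotheses of the present lemma, \eqref{eq:2.12} and \eqref{eq:2.11}, are among those required to invoke those earlier results, which is immediate since $\alpha = 0$ and $\dot\phi(0) > 0$ are precisely what is assumed there.
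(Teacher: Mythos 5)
Your proposal is correct and follows exactly the paper's own argument: invoke Lemma \ref{lemma:3.1} (whose hypotheses are met because \eqref{eq:2.12} and \eqref{eq:2.11} feed into Propositions \ref{proposition:3.4} and \ref{proposition:3.8++}) to get $\dot\phi(t) > 0$ on all of $(\mathcal{T}_-, \mathcal{T}_+)$, then use strict monotonicity together with $\phi(0) = \alpha = 0$ to conclude the sign of $\phi$ on each side of $t = 0$. No gaps; this matches the paper's proof of Lemma \ref{lem-sign-phi}.
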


\begin{proof}
	
	Combining Lemma \ref{lemma:3.1} with \eqref{eq:2.11}, we obtain $\dot{\phi}(t) > 0$ for all $t \in (\mathcal{T}_-, \mathcal{T}_+)$, so $\phi$ is strictly increasing. Together with the initial condition \eqref{eq:2.12}, which implies $\phi(0) = 0$, it follows that $\phi(t) > 0$ for $t \in (0, \mathcal{T}_+)$ and $\phi(t) < 0$ for $t \in (\mathcal{T}_-, 0)$. 
\end{proof}

We are now in a position to establish an upper bound for $H(t)$ on the interval $(0, \mathcal{T}_+)$.

\begin{lemma}\label{lemma:3.5a}
    If there exists $ T_0 >0 $ such that $B_3(t) < 0$ for all $ t \in (0, T_0) $, then
    \begin{equation*}
        H(t) < \frac{1}{t + \frac{1}{\beta}}, \quad \text{for all }  t \in (0, T_0).
    \end{equation*}
\end{lemma}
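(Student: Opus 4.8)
The plan is to follow the same Riccati-comparison strategy already used in Lemma \ref{lemma:3.2}, but now to extract an \emph{upper} bound for $H$ rather than a lower one. The hypothesis $B_3(t) < 0$ on $(0, T_0)$ unpacks, via the definition \eqref{B_3}, into the differential inequality
\begin{equation*}
\dot{H}(t) < -H^2(t), \quad \text{for all } t \in (0, T_0),
\end{equation*}
supplemented by the initial value $H(0) = \beta$ recorded in the initial data \eqref{initial-data}. The right-hand side is precisely the borderline Riccati nonlinearity $f(x) = -x^2$, and solving the associated comparison equation will produce the target profile $\frac{1}{t + 1/\beta}$.

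First I would introduce a comparison function $\overline{H}$ solving the Riccati-type initial value problem
\begin{equation*}
\dot{\overline{H}} = -\overline{H}^2, \quad \overline{H}(0) = \beta.
\end{equation*}
Since $H$ satisfies $\dot H < -H^2$ with matching initial value $H(0) = \overline{H}(0) = \beta$, the comparison theorem (Theorem \ref{theorem:C}) yields $H(t) < \overline{H}(t)$ for all $t \in (0, T_0)$. This is the upper-bound counterpart of the inequality $H > \underline{H}_+$ obtained in Lemma \ref{lemma:3.2}, with the orientation of the comparison reversed because here the derivative of $H$ is strictly dominated by the common right-hand side.

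It then remains to solve the scalar ODE explicitly, exactly as in \eqref{e:cpsl2}--\eqref{eq:underlineH+} with the coefficient $5$ replaced by $1$. Dividing by $\overline{H}^2$, which is legitimate since $\overline{H} > 0$ near $t=0$ by $\beta > 0$ and continuity, gives $\frac{d}{dt}(1/\overline{H}) = 1$, hence $1/\overline{H}(t) = t + 1/\beta$ and $\overline{H}(t) = \frac{1}{t + 1/\beta}$. Combining this with the comparison inequality produces the claimed strict bound $H(t) < \frac{1}{t + 1/\beta}$ on $(0, T_0)$.

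The only genuinely delicate point is to ensure that Theorem \ref{theorem:C} is applied in the correct orientation so that the \emph{strict} upper bound is preserved, rather than the lower bound obtained in Lemma \ref{lemma:3.2}; everything else is a routine transcription of that earlier argument.
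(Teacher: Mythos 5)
Your proposal is correct and coincides with the paper's own proof: both unpack $B_3<0$ into $\dot H<-H^2$, compare against the solution $\bar H_+(t)=\frac{1}{t+\frac{1}{\beta}}$ of the Riccati problem $\dot{\bar H}_+=-\bar H_+^2$, $\bar H_+(0)=\beta$, and invoke Theorem \ref{theorem:C} to conclude $H(t)<\bar H_+(t)$ on $(0,T_0)$.
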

\begin{proof}
	The proof is similar to that of Lemma \ref{lemma:3.2}. Consider the comparison function $\bar{H}_+(t)$ defined as the solution to the Riccati-type ODE
    \begin{equation}
        \dot {\bar H}_+=-\bar H_+^2 \quad \text{and} \quad \bar H_+|_{t=0}=\beta \implies
         \bar{H}_+(t) = \frac{1}{t + \frac{1}{\beta}}. 
         \label{eq:barH+}
    \end{equation} 
    Then, from Theorem \ref{theorem:C}, we deduce $H(t) < \bar{H}_+(t)$ on the interval $(0, T_0)$, 
    which completes the proof.
\end{proof}

\begin{lemma}\label{lemma:3.5b}

	Under the initial conditions \eqref{eq:2.12}--\eqref{eq:2.11}, if   $B_3(0)<0$, then  $
	B_3(t)< 0$,  for all   $t \in (0, \mathcal{T}_+)$. 
\end{lemma}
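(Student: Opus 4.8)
The plan is to reproduce the contradiction-and-continuation scheme already used in Lemmas \ref{lemma:3.3} and \ref{lemma:3.7++}, but now applied to the barrier $B_3=\dot H+H^2$ and anchored at the first time this barrier could vanish. Since $B_3(0)<0$ and $B_3$ is continuous (by its definition \eqref{B_3} together with Proposition \ref{t:locext}), there is some $T\in(0,\mathcal{T}_+]$ with $B_3<0$ on $(0,T)$. I would set
\begin{equation*}
T_{\max}:=\sup\{T\in(0,\mathcal{T}_+]\mid B_3(t)<0 \ \text{for all } t\in(0,T)\},
\end{equation*}
and aim to prove $T_{\max}=\mathcal{T}_+$. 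Suppose instead, for contradiction, that $T_{\max}<\mathcal{T}_+$. Then continuity of $B_3$ forces $B_3(T_{\max})=0$, i.e. $\dot H(T_{\max})=-H^2(T_{\max})$; otherwise the supremum could be enlarged, contradicting its definition.

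The next step is to gather all the sign information available at $T_{\max}$. Negativity of $B_3$ on $(0,T_{\max})$ lets me invoke Lemma \ref{lemma:3.5a} to conclude $H(t)<\frac{1}{t+1/\beta}$ there; passing to the limit $t\to T_{\max}^-$ and using continuity gives $H(T_{\max})\le \frac{1}{T_{\max}+1/\beta}<\beta<\frac{\sqrt{3}}{3}$, where the last inequality is the hypothesis \eqref{eq:2.13}. Consequently $H^2(T_{\max})<\frac13$. At the same time, Proposition \ref{proposition:3.4} ensures $H(T_{\max})>0$, Lemma \ref{lem-sign-phi} gives $\phi(T_{\max})>0$, and Lemma \ref{lemma:3.1} (together with \eqref{eq:2.11}) gives $\dot\phi(T_{\max})>0$.

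The heart of the argument is to substitute $\dot H=-H^2$ into the power identity $\mathcal{P}=0$ of Lemma \ref{proposition:3.2++}. The decisive structural feature is that this choice makes the $\phi^2$-coefficient $1+\dot H/H^2$ vanish identically, so the entire $\phi^2$ contribution drops out, while the $\dot\phi^2$-coefficient collapses to $2H^2-1-\frac{\dot H}{3H^2}=2H^2-\frac23$. Thus at $t=T_{\max}$ one is left with
\begin{equation*}
\mathcal{P}(T_{\max})=H\left(\Bigl(2H^2-\tfrac23\Bigr)\dot\phi^2-8H^3\phi\dot\phi\right)\bigg|_{t=T_{\max}}.
\end{equation*}
Since $H^2(T_{\max})<\frac13$, the coefficient $2H^2-\frac23$ is strictly negative, so the first term is strictly negative; since $H,\phi,\dot\phi$ are all strictly positive, the second term $-8H^3\phi\dot\phi$ is also strictly negative. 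Multiplying the (negative) bracket by $H(T_{\max})>0$ yields $\mathcal{P}(T_{\max})<0$, contradicting $\mathcal{P}(T_{\max})=0$. Hence $T_{\max}=\mathcal{T}_+$, which is the claim.

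I expect the only genuinely delicate point to be verifying that the $\dot\phi^2$-coefficient is negative. In the $B_1$ and $B_2$ arguments of Lemmas \ref{lemma:3.3} and \ref{lemma:3.7++}, the residual quadratic form in $(\dot\phi,\phi)$ was manifestly sign-definite after completing the square, so the contradiction was automatic. Here, by contrast, the $\phi^2$ term disappears entirely and definiteness of what remains is \emph{not} automatic: it hinges precisely on the upper bound $H<\beta<\frac{\sqrt{3}}{3}$ (equivalently $H^2<\frac13$) provided by Lemma \ref{lemma:3.5a} and the data restriction $\beta\in(0,\frac{\sqrt{3}}{3})$. Once that sign is secured, the remainder is routine bookkeeping of the signs of $H$, $\phi$, and $\dot\phi$.
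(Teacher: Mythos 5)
Your proposal is correct and follows essentially the same route as the paper's proof: the same $T_{\max}$ contradiction-and-continuation scheme, the same use of Lemma \ref{lemma:3.5a} to secure $H(T_{\max})<\beta<\tfrac{\sqrt{3}}{3}$, and the same observation that substituting $\dot H=-H^2$ into the power identity kills the $\phi^2$ term and leaves $H\bigl((2H^2-\tfrac{2}{3})\dot\phi^2-8H^3\phi\dot\phi\bigr)<0$, with signs controlled by Proposition \ref{proposition:3.4}, Lemma \ref{lemma:3.1}, and Lemma \ref{lem-sign-phi}. Your closing remark correctly isolates the one genuinely delicate point — that unlike the $B_1$, $B_2$ cases the contradiction is not sign-definite for free but requires the upper bound $H^2<\tfrac{1}{3}$ — which is exactly the role the paper assigns to Lemma \ref{lemma:3.5a} and the data restriction \eqref{eq:2.13}.
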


\begin{proof}
	The proof is similar to that of Lemma \ref{lemma:3.3}. Since $B_3(0) < 0$ and $B_3(t)$ is continuous (\eqref{B_3} and Proposition \ref{t:locext}), there exists a constant $T \in (0, \mathcal{T}_+]$ such that $
	B_3(t) < 0$ for all $t \in (0, T)$. 
	Define
	\[
	T_{\text{max}} := \sup\left\{T \in (0, \mathcal{T}_+] \;\middle|\; \dot{H}(t) + H^2(t) < 0 \text{ for all } t \in (0, T)\right\}.
	\]
	We claim that $T_{\text{max}} = \mathcal{T}_+$. Suppose, for contradiction, that $T_{\text{max}} < \mathcal{T}_+$. From the continuity of $B_3(t)$, we conclude $B_3(T_{\text{max}}) = 0$. 
	
	Lemma \ref{lemma:3.5a} and the continuity of $H$ imply
	\begin{equation}\label{e:H<bt}
		H(T_{\text{max}}) \leq \frac{1}{T_{\text{max}} + \frac{1}{\beta}} < \beta < \frac{\sqrt{3}}{3}.
	\end{equation}
	Using \eqref{e:H<bt} and Lemma \ref{lemma:3.1}, we find
	\begin{equation}\label{e:Hest1}
		\left.\left(2H^2 - \frac{2}{3}\right) \dot{\phi}^2\right|_{t = T_{\text{max}}} < 0.
	\end{equation}
	Furthermore, Proposition \ref{proposition:3.4}  (due to the initial conditions \eqref{eq:2.12}--\eqref{eq:2.11}), together with Lemmas \ref{lemma:3.1} and \ref{lem-sign-phi}, implies
	\begin{equation}\label{e:Hest2}
		H(T_{\text{max}}) > 0 \quad \text{and} \quad  -8H^3 \phi \dot{\phi}\big|_{t = T_{\text{max}}} < 0.
	\end{equation}
	
	Substituting $\dot{H}(T_{\text{max}}) + H^2(T_{\text{max}}) = 0$ into the power identity in Lemma \ref{proposition:3.2++} evaluated at $t = T_{\text{max}}$, and using \eqref{e:Hest1}--\eqref{e:Hest2}, we obtain
	\[
	0 = \left. H\Biggl(\biggl(2H^2 - \frac{2}{3}\biggl)\dot{\phi}^2 - 8H^3 \phi \dot{\phi} \Biggl) \right|_{t = T_{\text{max}}} < 0,
	\]
	which is a contradiction. Hence, $T_{\text{max}} = \mathcal{T}_+$, completing the proof.
\end{proof}

\begin{lemma}\label{t:dH+H2}
	 	Under the initial conditions \eqref{eq:2.12}--\eqref{eq:2.11}, or alternatively \eqref{eq:1.13++++}--\eqref{1.14++++}, we have $B_3(0)< 0$. 
	 \end{lemma}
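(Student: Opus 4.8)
The plan is to reduce the claim to a one-line evaluation of $B_3(0) = \dot H(0) + H^2(0)$ in each of the two parameter regimes, reusing the initial values of $\dot H$ and $H=\beta$ that were already computed in Lemma~\ref{t:dataB}. Since that lemma extracts $\dot H(0)$ from the constraint \eqref{eq:2.7} and the evolution formula \eqref{eq-dotH} under both hypothesis sets, the present statement should be essentially a corollary, and the only real work is checking the sign of an explicit expression.

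First I would treat the initial conditions \eqref{eq:2.12}--\eqref{eq:2.11}. Here Lemma~\ref{t:dataB}, specifically \eqref{eq:2.17++}, gives $\dot H(0) = 6\beta^4 - 3\beta^2$, while $H(0)=\beta$. Substituting into the definition \eqref{B_3} yields
\[
  B_3(0) = \dot H(0) + H^2(0) = 6\beta^4 - 3\beta^2 + \beta^2 = 2\beta^2\bigl(3\beta^2 - 1\bigr).
\]
The hypothesis $\beta \in \bigl(0, \tfrac{\sqrt 3}{3}\bigr)$ forces $\beta^2 < \tfrac13$, so the factor $3\beta^2 - 1$ is strictly negative and hence $B_3(0) < 0$, as required.

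Next I would handle the alternative conditions \eqref{eq:1.13++++}--\eqref{1.14++++}. The proof of Lemma~\ref{t:dataB} already records that $\dot H(0) = \kappa$ under these hypotheses (this is the computation leading to \eqref{e:B0.b}), so that $B_3(0) = \kappa + \beta^2$. The defining inequality $\kappa(\alpha,\beta) < -\beta^2$ of the admissible set $\mathcal A$ then immediately gives $B_3(0) < 0$. I do not expect any genuine analytic obstacle: the nontrivial algebra — solving the Hamiltonian constraint for $\dot\phi(0)$ and then using \eqref{eq-dotH} to get $\dot H(0)=\kappa$ — has been carried out in Lemma~\ref{t:dataB}. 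The only point demanding care is bookkeeping, namely invoking the correct value of $\dot H(0)$ and the correct parameter restriction ($\beta^2 < \tfrac13$ in the first case, $\kappa < -\beta^2$ in the second) for each hypothesis set.
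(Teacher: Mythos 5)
Your proposal is correct and follows the same route as the paper: both reduce the claim to the initial values $\dot H(0)=6\beta^4-3\beta^2$ (resp.\ $\dot H(0)=\kappa$) already computed in Lemma~\ref{t:dataB}, giving $B_3(0)=6\beta^4-2\beta^2<0$ via $\beta^2<\tfrac13$ in the first case and $B_3(0)=\kappa+\beta^2<0$ via the defining inequality of $\mathcal{A}$ in the second. Your factorization $2\beta^2(3\beta^2-1)$ is just a cosmetic rearrangement of the paper's expression.
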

	 \begin{proof}
	 	The proof of Lemma \ref{t:dataB} together with \eqref{eq:2.17++} and \eqref{e:B0.b} implies
	 	\begin{align*} 	B_3(0) =		 
	 		\begin{cases}
	  6\beta^4   -2\beta^2\overset{\eqref{eq:2.13}}{<}0, \quad &\text{for data \eqref{eq:2.12}--\eqref{eq:2.11}} , \\
	 	 \kappa +  \beta^2 \overset{\eqref{eq:1.13++++}}{<}0,   &\text{for data \eqref{eq:1.13++++}--\eqref{1.14++++}},
	 		\end{cases}  
	 	\end{align*} 
	which completes the proof.  	 	 
	 \end{proof}

\begin{proposition}\label{proposition:3.5}
    Under the initial conditions \eqref{eq:2.12}--\eqref{eq:2.11}, or alternatively \eqref{eq:1.13++++}--\eqref{1.14++++}, $H(t)$ has the upper bound
    \begin{equation*}
        H(t) < \frac{1}{t + \frac{1}{\beta}}, \quad \text{for all} \quad t \in (0, \mathcal{T}_+).
    \end{equation*}
\end{proposition}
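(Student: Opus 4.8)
The plan is to assemble the proposition directly from the three preceding lemmas, mirroring the structure already used for the lower bound in Proposition~\ref{proposition:3.4}. The strategy rests on the auxiliary quantity $B_3 = \dot H + H^2$ introduced in \eqref{B_3}: controlling its sign throughout $(0,\mathcal{T}_+)$ is tantamount to the Riccati differential inequality $\dot H < -H^2$, which integrates to the desired explicit bound. So the proposition itself will be essentially a one-line corollary, with the genuine analytic content already discharged inside the lemmas.

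First I would invoke Lemma~\ref{t:dH+H2} to record that both admissible families of initial data---the symmetry-breaking data \eqref{eq:2.12}--\eqref{eq:2.11} and the broader scalarization data \eqref{eq:1.13++++}--\eqref{1.14++++}---force $B_3(0) < 0$. This is the seed of the whole argument: for the first family one has $B_3(0) = 6\beta^4 - 2\beta^2 = 2\beta^2(3\beta^2 - 1) < 0$, using $\beta \in \bigl(0, \tfrac{\sqrt{3}}{3}\bigr)$ so that $3\beta^2 < 1$; for the second family $B_3(0) = \kappa + \beta^2 < 0$ follows from the admissibility constraint $\kappa < -\beta^2$ built into $\mathcal{A}$ in \eqref{eq:1.13++++}. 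Next I would propagate this sign forward in time using Lemma~\ref{lemma:3.5b}, which upgrades $B_3(0) < 0$ to $B_3(t) < 0$ on the entire interval $(0,\mathcal{T}_+)$. Finally, Lemma~\ref{lemma:3.5a} converts the global negativity $\dot H + H^2 < 0$ into the closed-form bound $H(t) < 1/(t + 1/\beta)$ via comparison with the Riccati ODE \eqref{eq:barH+}. Chaining these three steps yields the claim for all $t \in (0,\mathcal{T}_+)$.

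The \emph{main obstacle} is not in the present proposition but in Lemma~\ref{lemma:3.5b}, whose contradiction argument is the delicate ingredient being imported. There one supposes $T_{\max} < \mathcal{T}_+$ with $B_3(T_{\max}) = 0$, and the subtlety is that the power identity $\mathcal{P} = 0$ of Lemma~\ref{proposition:3.2++} must be shown to collapse into a strict $0 < 0$. The crucial observation is that setting $B_3 = 0$, i.e. $\dot H = -H^2$, simultaneously turns the coefficient $2H^2 - 1 - \tfrac{\dot H}{3H^2}$ into $2H^2 - \tfrac{2}{3}$ and makes the factor $1 + \tfrac{\dot H}{H^2}$ vanish, so the entire $\phi^2$-contribution to $\mathcal{P}$ drops out. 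What remains must then be reconciled with the available sign information: the lower bound $H(T_{\max}) < \beta < \tfrac{\sqrt{3}}{3}$ (supplied by Lemma~\ref{lemma:3.5a}) renders $2H^2 - \tfrac{2}{3} < 0$ so that $(2H^2 - \tfrac{2}{3})\dot\phi^2 < 0$, while the positivity of $H$, $\phi$, and $\dot\phi$ from Proposition~\ref{proposition:3.4} together with Lemmas~\ref{lemma:3.1} and \ref{lem-sign-phi} renders the cross term $-8H^3\phi\dot\phi < 0$.

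Thus the only point requiring care is the sign bookkeeping in the power identity at the breakdown time $T_{\max}$: once the $\phi^2$-terms are annihilated by $B_3(T_{\max}) = 0$, only two strictly negative contributions survive, so $\mathcal{P}(T_{\max}) < 0$, contradicting $\mathcal{P}(T_{\max}) = 0$. This forces $T_{\max} = \mathcal{T}_+$, and the proposition follows by combining Lemmas~\ref{t:dH+H2}, \ref{lemma:3.5a}, and \ref{lemma:3.5b} exactly as Proposition~\ref{proposition:3.4} was obtained on the lower-bound side.
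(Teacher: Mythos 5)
Your proposal is correct and follows exactly the paper's route: Proposition~\ref{proposition:3.5} is obtained by chaining Lemma~\ref{t:dH+H2} (which gives $B_3(0)<0$ for both data families, with the same computations you give), Lemma~\ref{lemma:3.5b} (propagation of $B_3<0$ on $(0,\mathcal{T}_+)$ via the power-identity contradiction at $T_{\max}$, collapsing the $\phi^2$-terms and leaving two strictly negative contributions), and Lemma~\ref{lemma:3.5a} (Riccati comparison yielding $H(t)<1/(t+\tfrac{1}{\beta})$). The only slip is cosmetic: the estimate $H(T_{\max})\leq 1/(T_{\max}+\tfrac{1}{\beta})<\beta<\tfrac{\sqrt{3}}{3}$ supplied by Lemma~\ref{lemma:3.5a} is an \emph{upper} bound, not a ``lower bound'' as you wrote, though you use it correctly to get $2H^2-\tfrac{2}{3}<0$.
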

\begin{proof}
  This proposition immediately follows from Lemmas \ref{lemma:3.5a}-\ref{t:dH+H2}.
\end{proof}

\subsubsection{Upper Bound of $H$ for $t \in (\mathcal{T}_-, 0)$}

We now turn to establishing an upper bound for $H$ on the interval $(\mathcal{T}_-, 0)$. Define $B_4(t)$ as
\begin{equation}
      B_4:=\dot{H}-3H^2+\frac{3}{2}.
      \label{B_4}
\end{equation}

\begin{lemma}\label{lemma:3.10a}
	Suppose there exists a constant $T_0 < 0$ such that $B_4(t)>0$ for all $t \in (T_0, 0)$. Then
	\begin{equation*}
		H(t) < \frac{2 \beta +\sqrt{2}-\left(\sqrt{2}-2 \beta \right) e^{3 \sqrt{2} t}}{\sqrt{2} \left(2 \beta  +\sqrt{2}+\left(\sqrt{2}-2 \beta \right) e^{3 \sqrt{2} t}\right)}<\frac{1}{\sqrt{2}}, \quad \text{for all } t \in (T_0, 0).
	\end{equation*} 
\end{lemma}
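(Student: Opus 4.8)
The plan is to reuse the comparison-function template of Lemmas~\ref{lemma:3.2}, \ref{lemma:3.6++}, and \ref{lemma:3.5a}, adapted to the backward interval $(T_0,0)$. The hypothesis $B_4(t)>0$ says precisely that $H$ satisfies the differential inequality $\dot H > 3H^2-\tfrac{3}{2}$ on $(T_0,0)$, together with $H|_{t=0}=\beta$. First I would introduce the comparison function $\bar H_-$ solving the Riccati-type initial value problem
\begin{equation*}
  \dot{\bar H}_- = 3\bar H_-^2 - \tfrac{3}{2}, \qquad \bar H_-(0)=\beta,
\end{equation*}
and invoke the comparison theorem (Theorem~\ref{theorem:C}). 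The one point requiring care is the orientation: we work on a backward interval and the inequality is $\dot H > f(H)$ with $f(x)=3x^2-\tfrac{3}{2}$, rather than $<$, so the correct conclusion is the \emph{upper} bound $H(t)<\bar H_-(t)$. This matches the backward comparison in Lemma~\ref{lemma:3.6++}: under the reflection $s=-t$, the inequality $\dot H > f(H)$ becomes $\frac{d}{ds}H(-s) < -f\bigl(H(-s)\bigr)$, while $\bar H_-(-s)$ solves $\frac{d}{ds}\bar H_-(-s) = -f\bigl(\bar H_-(-s)\bigr)$ with the same initial value, so the forward comparison theorem yields $H<\bar H_-$ when read back in $t$.

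Next I would integrate the Riccati equation explicitly. Factoring $3\bar H_-^2-\tfrac{3}{2} = 3\bigl(\bar H_- - \tfrac{1}{\sqrt{2}}\bigr)\bigl(\bar H_- + \tfrac{1}{\sqrt{2}}\bigr)$ and separating variables, partial fractions give
\begin{equation*}
  \frac{1}{\sqrt{2}}\ln\left|\frac{\bar H_- - \tfrac{1}{\sqrt{2}}}{\bar H_- + \tfrac{1}{\sqrt{2}}}\right| = 3t + C,
\end{equation*}
with $C$ fixed by $\bar H_-(0)=\beta$. Since $\beta<\tfrac{\sqrt{3}}{3}<\tfrac{1}{\sqrt{2}}$, the argument of the logarithm keeps a fixed sign along the whole trajectory, so exponentiating and solving the resulting affine relation for $\bar H_-$ produces exactly the closed form
\begin{equation*}
  \bar H_-(t)=\frac{2\beta+\sqrt{2}-(\sqrt{2}-2\beta)e^{3\sqrt{2}\,t}}{\sqrt{2}\bigl(2\beta+\sqrt{2}+(\sqrt{2}-2\beta)e^{3\sqrt{2}\,t}\bigr)}
\end{equation*}
claimed in the statement.

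Finally I would verify the strict bound $\bar H_-(t)<\tfrac{1}{\sqrt{2}}$. Because $\beta<\tfrac{1}{\sqrt{2}}$ we have $\sqrt{2}-2\beta>0$, hence the denominator $2\beta+\sqrt{2}+(\sqrt{2}-2\beta)e^{3\sqrt{2}\,t}$ is positive; cross-multiplying by it collapses $\bar H_-<\tfrac{1}{\sqrt{2}}$ to the trivial inequality $0<2(\sqrt{2}-2\beta)e^{3\sqrt{2}\,t}$. Conceptually this simply records that $\tfrac{1}{\sqrt{2}}$ is the backward-stable equilibrium of the Riccati flow, approached but never crossed from below. Chaining the two estimates gives $H(t)<\bar H_-(t)<\tfrac{1}{\sqrt{2}}$ on $(T_0,0)$, as desired.

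I expect the only genuine obstacle to be fixing the orientation of the comparison on the backward interval; once the sign conventions are pinned down — in particular $\sqrt{2}-2\beta>0$, guaranteed by $\beta<\tfrac{\sqrt{3}}{3}$ — the integration and the final algebraic inequality are routine.
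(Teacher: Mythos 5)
Your proposal is correct and follows essentially the same route as the paper: define the comparison function $\bar H_-$ solving the Riccati problem $\dot{\bar H}_- = 3\bar H_-^2 - \tfrac{3}{2}$, $\bar H_-(0)=\beta$, integrate it explicitly, and invoke Theorem~\ref{theorem:C} (backward branch) to get $H<\bar H_-$ on $(T_0,0)$. Your treatment is in fact slightly more careful than the paper's terse proof, since you spell out the orientation of the backward comparison and verify the bound $\bar H_-<\tfrac{1}{\sqrt{2}}$ (using $\sqrt{2}-2\beta>0$ from the standing assumption $\beta<\tfrac{\sqrt{3}}{3}$), both of which the paper leaves implicit.
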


\begin{proof}
	Corresponding to $B_4(t)=0$, we define $\bar H_-$ to satisfy 
    \begin{equation*}
        \dot{\bar H}_-(t) = -\frac{3}{2} + 3 \bar H^2_-(t), \quad \bar H_-(0)=\beta.\quad \text{for all } t \in (T_0, 0).
    \end{equation*}
    Directly solving the above equation yields
\begin{equation}
   \bar H_-(t) = \frac{2 \beta +\sqrt{2}-\left(\sqrt{2}-2 \beta \right) e^{3 \sqrt{2} t}}{\sqrt{2} \left(2 \beta  +\sqrt{2}+\left(\sqrt{2}-2 \beta \right) e^{3 \sqrt{2} t}\right)}, \quad \text{for all } t \in (T_0, 0).
   \label{eq:barH--}
\end{equation}
The comparison theorem \ref{theorem:C} shows $\bar H_-$ is an upper bound for $H$.
\end{proof}

\begin{lemma}\label{lemma:3.10b}
	Under the initial conditions \eqref{eq:2.12}--\eqref{eq:2.11}, if   $B_4(0)>0$, then   $B_4(t)>0 $, for all $ t \in (\mathcal{T}_-,0) $.
\end{lemma}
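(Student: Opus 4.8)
The plan is to mirror the contradiction template already used for $B_1$, $B_2$, and $B_3$ in Lemmas \ref{lemma:3.3}, \ref{lemma:3.7++}, and \ref{lemma:3.5b}. Since $B_4(0)>0$ and $B_4$ is continuous (by \eqref{B_4} and Proposition \ref{t:locext}), there is a constant $T\in[\mathcal{T}_-,0)$ with $B_4(t)>0$ on $(T,0)$; I set $T_{\text{min}}:=\inf\{T\in[\mathcal{T}_-,0)\mid B_4(t)>0 \text{ for all } t\in(T,0)\}$ and suppose for contradiction that $T_{\text{min}}>\mathcal{T}_-$. Continuity of $B_4$ then forces $B_4(T_{\text{min}})=0$, i.e. $\dot H(T_{\text{min}})=3H^2(T_{\text{min}})-\tfrac32$, and the goal is to show this is incompatible with the power identity $\mathcal{P}(T_{\text{min}})=0$ of Lemma \ref{proposition:3.2++}.

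First I would assemble the sign data available at $t=T_{\text{min}}$. Proposition \ref{proposition:3.8++} gives $H(T_{\text{min}})>0$; Lemma \ref{lemma:3.10a} (applicable since $B_4>0$ on $(T_{\text{min}},0)$) gives $H(T_{\text{min}})\le\tfrac{1}{\sqrt2}$; Lemma \ref{lem-sign-phi} gives $\phi(T_{\text{min}})<0$; and Lemma \ref{lemma:3.1} with Lemma \ref{lemma:3.2++++} gives $\dot\phi(T_{\text{min}})>0$ on the $\iota=0$ branch of \eqref{e:dotphi}, so that $\phi\dot\phi<0$ and $\dot\phi\neq0$ there. Substituting $\dot H=3H^2-\tfrac32$ into the power identity simplifies its two coefficients: the coefficient of $\dot\phi^2$ collapses to $2H^2-1-\tfrac{\dot H}{3H^2}=\tfrac{(2H^2-1)^2}{2H^2}$, while $1+\tfrac{\dot H}{H^2}=4-\tfrac{3}{2H^2}$, yielding $\mathcal{P}/H=\tfrac{(2H^2-1)^2}{2H^2}\dot\phi^2-8H^3\phi\dot\phi+6H^4(3-8H^2)\phi^2$.

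The main obstacle — and the feature distinguishing this case from the $B_3$ case of Lemma \ref{lemma:3.5b}, where the $\phi^2$ coefficient vanished identically — is that $6H^4(3-8H^2)$ has indefinite sign (it is negative once $H^2>\tfrac38$, which is compatible with $H\le\tfrac{1}{\sqrt2}$), so one cannot conclude positivity term by term. I would resolve this by using the Hamiltonian constraint \eqref{eq:2.5++} in the form $\dot\phi^2=3H^2-12H^3\phi\dot\phi$ to eliminate $\dot\phi^2$, which gives $\mathcal{P}/H=\tfrac{3}{2}(2H^2-1)^2-H(24H^4-16H^2+6)\phi\dot\phi+6H^4(3-8H^2)\phi^2$, and then invoking the $\iota=0$ branch relation $\dot\phi=-6\phi H^3+\sqrt{(6\phi H^3)^2+3H^2}$ with $\phi<0$, which yields the pointwise bound $-\phi\dot\phi>12H^3\phi^2$. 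Since $24H^4-16H^2+6>0$ and $H>0$, this absorbs the indefinite term, leaving $\mathcal{P}/H>\tfrac32(2H^2-1)^2+6H^4(48H^4-40H^2+15)\phi^2$; as $48H^4-40H^2+15>0$ (negative discriminant) and $\phi^2>0$, we get $\mathcal{P}(T_{\text{min}})>0$, contradicting $\mathcal{P}(T_{\text{min}})=0$. Hence $T_{\text{min}}=\mathcal{T}_-$ and $B_4>0$ on $(\mathcal{T}_-,0)$. I expect the only delicate point to be this algebraic absorption — verifying that the positive $\phi\dot\phi$ contribution dominates the possibly negative $\phi^2$ term, i.e. checking the sign of $48H^4-40H^2+15$ — with everything else being routine bookkeeping along the established template.
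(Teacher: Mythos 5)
Your proof is correct, and it follows the paper's contradiction template exactly (the $T_{\text{min}}$ setup, $B_4(T_{\text{min}})=0$, the sign data from Propositions \ref{proposition:3.8++} and Lemmas \ref{lem-sign-phi}, \ref{lemma:3.1}, and the evaluation of the power identity). Where you diverge is in the algebra after substituting $\dot H = 3H^2-\tfrac32$. The paper keeps $\dot\phi^2$ in play: its coefficient $2H^2+\tfrac{1}{2H^2}-2$ is nonnegative by AM--GM (your $\tfrac{(2H^2-1)^2}{2H^2}$ is the same quantity written as a square), and it resolves the indefinite-sign issue you correctly identified by splitting $6H^4(3-8H^2)\phi^2 = 18H^4\phi^2 - 48H^6\phi^2$ and grouping the negative piece with the cross term, using exactly your key bound $\dot\phi > -12\phi H^3$ to get $-8H^3\phi\dot\phi - 48H^6\phi^2 > 48H^6\phi^2 > 0$; the remaining $18H^4\phi^2>0$ then closes the contradiction with no polynomial discriminants needed. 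You instead first eliminate $\dot\phi^2$ via the Hamiltonian constraint \eqref{eq:2.5++} and then absorb the indefinite term with the same bound, which requires verifying positivity of $24H^4-16H^2+6$ and $48H^4-40H^2+15$ (both checks are right: discriminants $-320$ and $-1280$). Your route is slightly longer but equally valid, and it has the small virtue of making explicit that the upper bound $H\le \tfrac{1}{\sqrt2}$ from Lemma \ref{lemma:3.10a} is never actually needed; the paper's grouping is more economical and is closer in spirit to how the analogous $B_5$ case is handled later via Lemma \ref{t:PI2} (which, interestingly, is where the paper \emph{does} use the constraint the way you do here).
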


\begin{proof}
	Since $B_4(0) > 0$ and $\dot{H}(t)$ is continuous (by definition \eqref{B_4} and Proposition \ref{t:locext}), there exists a constant $T \in (\mathcal{T}_-, 0]$ such that $B_4(t) > 0$ for all $t \in (T, 0)$. Define
	\begin{equation*}
	T_{\text{min}} := \inf\left\{T \in (\mathcal{T}_-, 0] \;\middle|\; B_4(t) > 0 \text{ for all } t \in (T, 0)\right\}.
	\end{equation*}
	We claim that $T_{\text{min}} = \mathcal{T}_-$. Suppose, for contradiction, that $T_{\text{min}} > \mathcal{T}_-$. By continuity, we then have $
	B_4({T_{\text{min}}})=(\dot{H}-3H^2 + \frac{3}{2})|_{T_{\text{min}}}  = 0$. 
	Substituting this into the power identity (Lemma \ref{proposition:3.2++}) evaluated at $t = T_{\text{min}}$ yields
	\begin{equation}\label{eq:key_identity++}
		0 = H \left( \Bigl(2H^2 + \frac{1}{2H^2} - 2 \Bigr)\dot{\phi}^2 - 8H^3 \phi \dot{\phi} - 48H^6 \phi^2 + 18H^4 \phi^2 \right)\Big|_{t = T_{\text{min}}}.
	\end{equation}
	
	From the initial conditions \eqref{eq:2.12}--\eqref{eq:2.11} and hence Proposition \ref{proposition:3.8++}, we have $H|_{T_{\text{min}}} > 0$. We now analyze each term in \eqref{eq:key_identity++}: 
    \begin{enumerate}[leftmargin=*]
	\item\label{c:1} \textbf{First term}: By the Cauchy-Schwarz inequality,
	\begin{equation*}
		\biggl(2H^2 + \frac{1}{2H^2 }\biggr) \bigg|_{t=T_{\text{min}}}
		\geq 2 \overset{\text{Lem.}\ref{lemma:3.1}}{\implies}    \biggl(2H^2 + \frac{1}{2H^2 }-2\biggr)\dot{\phi}^2 \bigg|_{t=T_{\text{min}}}\geq 0. 
	\end{equation*} 
	
	\item\label{c:2} \textbf{Second and third terms}: From \eqref{eq:3.4},  Lemma \ref{lem-sign-phi} (i.e., $\phi|_{t=T_{\text{min}}} < 0$) and $H|_{t=T_{\text{min}}} > 0$, we obtain $
	\dot{\phi} |_{t=T_{\text{min}}}> -12\phi H^3 |_{t=T_{\text{min}}}$. 
	Therefore,
	\begin{align*}
		(-8H^3 \phi \dot{\phi} - 48 H^6 \phi^2 )|_{t=T_{\text{min}}} >& (-8H^3 \phi (-12\phi H^3 ) - 48 H^6 \phi^2 )|_{t=T_{\text{min}}}  \notag  \\
		=& (48 H^6 \phi^2 )|_{t=T_{\text{min}}}> 0.
	\end{align*}
	
	\item\label{c:3} \textbf{Fourth term}: Since $H^4({T_{\text{min}}})> 0$ and $\phi^2({T_{\text{min}}}) > 0$, we obtain $
	18H^4 \phi^2|_{t=T_{\text{min}}}> 0$. 
\end{enumerate}

	Combining the above estimates \eqref{c:1}--\eqref{c:3}, all the terms in \eqref{eq:key_identity++} are strictly positive, which leads to the contradiction $0 > 0$. Hence,  we conclude that $T_{\text{min}} = \mathcal{T}_-$. Therefore, $B_4(t) > 0$ for all $t \in (\mathcal{T}_-, 0)$.
\end{proof}

\begin{proposition}\label{proposition:3.10}
Under the initial conditions \eqref{eq:2.12}--\eqref{eq:2.11}, $ H(t) $ has the upper bound:
\begin{equation*}
    H(t) < \frac{2 \beta +\sqrt{2}-\left(\sqrt{2}-2 \beta \right) e^{3 \sqrt{2} t}}{\sqrt{2} \left(2 \beta  +\sqrt{2}+\left(\sqrt{2}-2 \beta \right) e^{3 \sqrt{2} t}\right)} <\frac{1}{\sqrt{2}} <1,  \quad \text{for all} \quad t \in (\mathcal{T}_-, 0).
\end{equation*}
\end{proposition}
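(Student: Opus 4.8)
The plan is to follow the same three-step template already used for Propositions \ref{proposition:3.4}, \ref{proposition:3.8++}, and \ref{proposition:3.5}: verify the sign of the auxiliary quantity $B_4$ at $t=0$, propagate that sign across the whole backward interval via the contradiction argument, and then read off the explicit bound from the associated comparison ODE. Since Lemmas \ref{lemma:3.10a} and \ref{lemma:3.10b} already supply the last two ingredients, the only genuinely new computation is to check that $B_4(0) > 0$ under the initial conditions \eqref{eq:2.12}--\eqref{eq:2.11}.

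First I would compute $B_4(0)$. The initial conditions fix $H(0) = \beta$ and, by \eqref{eq:2.17++}, $\dot{H}(0) = 6\beta^4 - 3\beta^2$. Substituting into the definition \eqref{B_4} gives
\[
B_4(0) = \dot{H}(0) - 3H^2(0) + \tfrac{3}{2} = 6\beta^4 - 6\beta^2 + \tfrac{3}{2} = 6\left(\beta^2 - \tfrac{1}{2}\right)^2.
\]
The key observation is that this expression is a perfect square, hence nonnegative, vanishing only at $\beta^2 = \tfrac{1}{2}$. Because the hypothesis \eqref{eq:2.13} confines $\beta^2$ to $(0, \tfrac{1}{3})$, we have $\beta^2 \neq \tfrac{1}{2}$, so $B_4(0) > 0$ strictly.

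With $B_4(0) > 0$ established, Lemma \ref{lemma:3.10b} immediately yields $B_4(t) > 0$ for all $t \in (\mathcal{T}_-, 0)$. Fixing any $T_0 \in (\mathcal{T}_-, 0)$, the hypothesis of Lemma \ref{lemma:3.10a} then holds on $(T_0, 0)$, so the comparison bound $H(t) < \bar{H}_-(t) < \tfrac{1}{\sqrt{2}}$ holds there; since $T_0$ is arbitrary, this bound extends to all of $(\mathcal{T}_-, 0)$, which is precisely the claim (the trailing inequality $\tfrac{1}{\sqrt{2}} < 1$ being trivial).

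As for the main obstacle: within the proposition itself there is essentially none, since the substantive work has already been done and is assumed, namely the proof-by-contradiction in Lemma \ref{lemma:3.10b}, which evaluates the power identity $\mathcal{P} = 0$ at a putative first zero $T_{\min}$ of $B_4$ and extracts a sign contradiction from the four-term splitting of $\mathcal{P}$. The only place requiring genuine care here is the perfect-square identity $6\beta^4 - 6\beta^2 + \tfrac{3}{2} = 6(\beta^2 - \tfrac{1}{2})^2$ together with the observation that the admissible range \eqref{eq:2.13} keeps $\beta^2$ strictly below the double root at $\tfrac{1}{2}$; this is exactly what guarantees strict positivity of $B_4(0)$ and thereby activates the two preceding lemmas.
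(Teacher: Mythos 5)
Your proposal is correct and follows essentially the same route as the paper: compute $B_4(0) = 6\beta^4 - 6\beta^2 + \tfrac{3}{2}$ from \eqref{eq:2.17++} and the definition \eqref{B_4}, conclude $B_4(0)>0$, and then invoke Lemmas \ref{lemma:3.10a} and \ref{lemma:3.10b}. The only (cosmetic) difference is how positivity is verified — you write the expression as the perfect square $6\bigl(\beta^2-\tfrac{1}{2}\bigr)^2$, which is nonvanishing since $\beta^2 \in \bigl(0,\tfrac{1}{3}\bigr)$, whereas the paper bounds it below by $\tfrac{1}{6}$ using the same range of $\beta$; both are valid.
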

\begin{proof}
Using \eqref{eq:2.17++}, we obtain $
		B_4(0) = 6\beta^4 - 3\beta^2 +\frac{3}{2} -3\beta^2 >  \frac{1}{6}>0$.  
	This proposition immediately follows from Lemmas \ref{lemma:3.10a} and \ref{lemma:3.10b}.
\end{proof}

\subsection{Improved lower bound for $H$ on $t \in (\mathcal{T}_-, 0)$}

In \S\ref{s:Hlrbd1}, we establish a lower bound for $H$ on the interval $(\mathcal{T}_-, 0)$. However, that estimate is insufficient to fully capture the essential behavior of $H$.  
In this section, we derive an improved \textit{a posteriori} estimate for the lower bound of $H$, which provides a more accurate description of its profile. We first state the main result in Proposition~\ref{t:Hlrbd2}, and defer its proof to the end of the section. To proceed, we define $B_5$ as 
	\begin{align}\label{e:B5.a}
		B_5(t):=\begin{cases}
			\dot{H}(t)-10 H^2(t)+ \frac{454 \beta}{45} H(t) , \quad &\text{if }0< \beta \leq \sqrt{\frac{5}{27}} , \\
			\dot{H}(t)-10 H^2(t)+4H(t),  &\text{if }\sqrt{\frac{5}{27}}< \beta <\frac{\sqrt{3}}{3}  .
		\end{cases} 
	\end{align}
\begin{proposition}\label{t:Hlrbd2}
	Under the initial conditions \eqref{eq:2.12}--\eqref{eq:2.11}, $H(t)$ satisfies
	\begin{equation*}
		H(t) >\mathfrak{L}(t):= \begin{cases}
			\displaystyle	\frac{227 \beta }{2 e^{\frac{454 \beta  t}{45}}+225} >\beta , \quad &\text{if }0< \beta \leq \sqrt{\frac{5}{27}} , \\
			\displaystyle	\frac{2 \beta }{5 \beta - (5 \beta -2) e^{4 t} } >\frac{2}{5} , \quad &\text{if }\sqrt{\frac{5}{27}}< \beta <\frac{\sqrt{3}}{3}  ,
		\end{cases}
		\quad \text{for all } t \in (\mathcal{T}_-, 0).
	\end{equation*} 
\end{proposition}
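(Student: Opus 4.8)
The plan is to follow the two-lemma template already used for $B_1,\dots,B_4$: a comparison lemma that converts a sign condition on $B_5$ into the explicit bound $\mathfrak{L}$, and a propagation lemma that shows the sign condition persists on all of $(\mathcal{T}_-,0)$, with a final verification of the sign at $t=0$. Concretely, I would first prove the analogue of Lemma~\ref{lemma:3.6++}: \emph{if} $B_5(t)<0$ on some $(T_0,0)$, then $\dot H<10H^2-cH$ there, where $c=\tfrac{454\beta}{45}$ or $c=4$ according to the two cases in \eqref{e:B5.a}. The comparison function $\underline H$ solving the Bernoulli-type ODE $\dot{\underline H}=10\underline H^2-c\underline H$ with $\underline H(0)=\beta$ integrates explicitly (logistic form), and one checks that it is exactly $\mathfrak{L}$; applying the comparison theorem (Theorem~\ref{theorem:C}) backward in time on $(T_0,0)$ gives $H>\underline H=\mathfrak{L}$. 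I would also record the elementary monotonicity facts $\mathfrak{L}(t)>\beta$ (case $\beta\le\sqrt{5/27}$) and $\mathfrak{L}(t)>\tfrac{2}{5}$ (case $\beta>\sqrt{5/27}$) for $t<0$, which follow because $e^{ct}<1$ there.

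Next I would prove the propagation lemma, the analogue of Lemma~\ref{lemma:3.7++}: if $B_5(0)<0$ then $B_5<0$ on all of $(\mathcal{T}_-,0)$. Setting $T_{\min}=\inf\{T:B_5<0 \text{ on }(T,0)\}$ and assuming $T_{\min}>\mathcal{T}_-$ for contradiction, continuity forces $B_5(T_{\min})=0$, i.e.\ $\dot H(T_{\min})=10H^2-cH$. The comparison lemma on $(T_{\min},0)$ then yields $H(T_{\min})\ge\mathfrak{L}(T_{\min})$, hence $H(T_{\min})>\beta$ (case 1) or $H(T_{\min})>\tfrac25$ (case 2), while Proposition~\ref{proposition:3.10} gives $H(T_{\min})<\tfrac{1}{\sqrt2}$. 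Substituting $\dot H=10H^2-cH$ into the power identity (Lemma~\ref{proposition:3.2++}) at $T_{\min}$ produces a quadratic form in $(\dot\phi,\phi)$; the goal is to show it is sign-definite, using $\dot\phi(T_{\min})\neq0$ (Lemma~\ref{lemma:3.2++++}), $\dot\phi>0$ (Lemma~\ref{lemma:3.1}), and $\phi<0$ (Lemma~\ref{lem-sign-phi}), thereby contradicting $\mathcal{P}=0$.

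To close, I would verify $B_5(0)<0$: from $\dot H(0)=6\beta^4-3\beta^2$ in \eqref{eq:2.17++} one computes $B_5(0)=\beta^2\!\left(6\beta^2-\tfrac{131}{45}\right)$ in case 1 and $B_5(0)=\beta\!\left(6\beta^3-13\beta+4\right)$ in case 2, both negative on their respective $\beta$-ranges. Combining this with the comparison and propagation lemmas gives Proposition~\ref{t:Hlrbd2}.

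The hard part is the sign-definiteness at $T_{\min}$. After substituting $\dot H=10H^2-cH$, the bracket in $\mathcal{P}$ takes the shape $-a\,\dot\phi^2-8H^3\phi\dot\phi-e\,\phi^2$, and a short check on the range $\beta<H<\tfrac{1}{\sqrt2}$ (resp.\ $\tfrac25<H<\tfrac{1}{\sqrt2}$) shows that both the $\dot\phi^2$-coefficient $a=\tfrac{13}{3}-2H^2-\tfrac{c}{3H}$ and the $\phi^2$-coefficient $e=132H^6-12cH^5$ are positive, while the cross term $-8H^3\phi\dot\phi$ is positive since $\phi<0<\dot\phi$. Negative-definiteness of the bracket therefore reduces to the discriminant inequality $(8H^3)^2<4ae$, which I expect to hold precisely on the stated $H$-ranges; this is exactly what dictates the two-case split and the tuned coefficients $\tfrac{454\beta}{45}$ and $4$, with $\sqrt{5/27}$ marking the threshold at which the fixed coefficient $c=4$ begins to suffice. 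An alternative route is to eliminate $\phi$ via the Hamiltonian constraint $\dot\phi^2+12H^3\phi\dot\phi=3H^2$ (equivalently $\dot\phi>\sqrt3\,H$), reducing the two-variable form to a one-variable estimate in $H$; either way, establishing this algebraic inequality uniformly over the admissible range of $H$ is the main obstacle, and verifying it is where the bulk of the work lies.
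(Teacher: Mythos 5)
Your scaffolding --- the comparison lemma converting $B_5<0$ into $H>\mathfrak{L}$, the propagation lemma via $T_{\min}$, and the verification $B_5(0)<0$ --- is exactly the paper's (Lemmas~\ref{t:HlrbdB5.a}, \ref{t:HlbdB5.b} and \ref{t:dH+H3}), and your formulas for $B_5(0)$ and for the coefficients $a=\tfrac{13}{3}-2H^2-\tfrac{c}{3H}$ and $e=132H^6-12cH^5=12H^5(11H-c)$ are correct. The gap is in the step you yourself flag as the hard part, and it is fatal as stated: the discriminant inequality $(8H^3)^2<4ae$, equivalently $4H^2<(11H-c)\bigl(13H-6H^3-c\bigr)$, is \emph{false} on part of the admissible range, so the bracket is genuinely indefinite there and no contradiction can follow from treating $(\dot\phi,\phi)$ as independent variables. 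Concretely, in your second case ($c=4$) take $H=0.42\in\bigl(\tfrac25,\tfrac{1}{\sqrt2}\bigr)$: then $4H^2\approx 0.706$ while $(11H-4)(13H-6H^3-4)\approx 0.62\times 1.02\approx 0.63$. Since $\mathfrak{L}(T_{\min})$ tends to $\tfrac25$ as $T_{\min}\to-\infty$, values of $H(T_{\min})$ near $0.42$ cannot be excluded. The same failure occurs in your first case for $\beta$ near $\sqrt{5/27}$ with $H$ near $\beta$. In particular, your interpretation that the two-case split and the tuned coefficients $\tfrac{454\beta}{45}$, $4$ are ``exactly what dictates'' the discriminant inequality is incorrect: under that criterion these choices simply do not work.

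What is missing is the use of the Hamiltonian constraint at $T_{\min}$, which is how the paper makes the sign argument close. In Lemma~\ref{t:PI2} the cross term is split as $-8H^3\phi\dot\phi=-(8-c)H^3\phi\dot\phi-cH^3\phi\dot\phi$ (the paper's $c$ is a small tuning parameter, $\tfrac45$ or $\tfrac49$, not your linear coefficient); the first piece is rewritten \emph{exactly} via \eqref{eq:2.5++} as $\tfrac{8-c}{12}\dot\phi^2-\tfrac{8-c}{4}H^2$, and the second is bounded by the constraint-derived inequality of Lemma~\ref{t:Hphidphi}, $H\phi\dot\phi>-12\phi^2H^4+\sqrt3\,\phi H^2$, which encodes the explicit root formula \eqref{eq:3.4} together with $\phi<0<\dot\phi$ and $H>0$. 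Completing the square in $\phi$ then leaves a $\dot\phi^2$-coefficient such as $2H^2+\tfrac{4}{3H}-\tfrac{100}{27}$, whose negativity on $\bigl(\tfrac25,\tfrac1{\sqrt2}\bigr)$ is precisely Lemma~\ref{t:pre2} (Lemma~\ref{t:pre1} in the first case). Your closing remark about eliminating $\phi$ via the constraint points in the right direction, but the consequence you cite, $\dot\phi>\sqrt3\,H$, is not equivalent to the constraint and is too weak: it restricts only $\dot\phi$, whereas the indefinite directions of your quadratic form can be realized at any ratio $\dot\phi/|\phi|$ by rescaling $\phi$. One needs the full constraint information, in the form of Lemma~\ref{t:Hphidphi} plus the exact identity (or a complete elimination of $\phi$), and that is precisely the part your proposal leaves unproven.
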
	
	
\begin{lemma}\label{t:Hphidphi}
Under the initial conditions \eqref{eq:2.12}--\eqref{eq:2.11},	if $\dot{\phi}$ satisfies \eqref{eq:3.4}, then for all    $t\in (\mathcal{T}_-,0)$, 
 \begin{equation*}
 H\phi\dot{\phi} >-12\phi^2H^4 + \sqrt{3} \phi H^2.
 \end{equation*} 
\end{lemma}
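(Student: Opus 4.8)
The plan is to reduce the asserted inequality to a trivially true algebraic statement by exploiting the sign information already available and then squaring once. Throughout I would use two facts valid on $(\mathcal{T}_-,0)$: that $\phi(t)<0$ by Lemma~\ref{lem-sign-phi}, and that $H(t)>0$ by Propositions~\ref{proposition:3.4} and \ref{proposition:3.8++}.

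First I would insert the explicit formula \eqref{eq:3.4} for $\dot\phi$ into $H\phi\dot\phi$. Since $H>0$, the radical simplifies as $\sqrt{(6\phi H^3)^2+3H^2}=H\sqrt{36\phi^2H^4+3}$, which gives
\[
H\phi\dot\phi=-6\phi^2H^4+H^2\phi\sqrt{36\phi^2H^4+3}.
\]
The claimed inequality $H\phi\dot\phi>-12\phi^2H^4+\sqrt{3}\,\phi H^2$ then becomes, after collecting everything on one side,
\[
6\phi^2H^4+H^2\phi\sqrt{36\phi^2H^4+3}-\sqrt{3}\,\phi H^2>0.
\]

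Next I would divide through by $H^2>0$, which is legitimate precisely because $H>0$, and then divide by $\phi$. Because $\phi<0$, this second division reverses the inequality and reduces the claim to
\[
\sqrt{36\phi^2H^4+3}<\sqrt{3}-6\phi H^2 .
\]
Here the right-hand side is strictly positive, since $\phi<0$ forces $-6\phi H^2>0$, so both sides are positive and squaring is an equivalence. Squaring cancels the common $36\phi^2H^4+3$ and leaves the manifestly true inequality $0<-12\sqrt{3}\,\phi H^2$, valid because $\phi<0$ and $H^2>0$.

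I do not expect a genuine obstacle here; the only thing to watch carefully is the sign bookkeeping. The entire argument hinges on $\phi<0$—so that dividing by $\phi$ flips the inequality and makes the squared right-hand side positive—and on $H>0$—so that $H$ may be pulled out of the radical and division by $H^2$ preserves the direction. Both are exactly the inputs supplied by Lemma~\ref{lem-sign-phi} and Propositions~\ref{proposition:3.4} and \ref{proposition:3.8++}, together with the hypothesis that $\dot\phi$ is given by the $\iota=0$ branch \eqref{eq:3.4} (Lemma~\ref{lemma:3.1}), so no further ingredient is needed. The resulting estimate is precisely the lower bound on $H\phi\dot\phi$ that feeds into the improved lower-bound argument for $H$ in Proposition~\ref{t:Hlrbd2}.
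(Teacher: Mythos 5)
Your proposal is correct and follows essentially the same route as the paper: substitute the $\iota=0$ branch \eqref{eq:3.4}, use $H>0$ and $\phi<0$ on $(\mathcal{T}_-,0)$, and verify the resulting radical inequality. The only cosmetic difference is that the paper absorbs $H\phi=-\sqrt{H^2\phi^2}$ into the radical and invokes the subadditivity bound \eqref{eq:B2}, whereas you prove the same bound inline by dividing by $H^2$ and $\phi$ (flipping the sign) and squaring both positive sides.
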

\begin{proof} 
 	From   Proposition~\ref{proposition:3.8++}, we know that $H(t) > 0$ for all $t \in (\mathcal{T}_-, 0)$. Moreover, Lemma~\ref{lem-sign-phi} implies $H\phi = -\sqrt{H^2 \phi^2}$ on this interval. Through \eqref{eq:3.4}, we obtain
 \begin{equation*}
H\phi \dot{\phi} 
=   -6 \phi^2 H^4  -   \sqrt{\left(6\phi^2  H^4 \right)^2 + 3 H^4 \phi^2 }  >-12\phi^2H^4 + \sqrt{3} \phi H^2 ,
 \end{equation*}
which completes the proof.
\end{proof}

\begin{lemma}\label{t:PI2}
	Let
\begin{equation*}
	B_5(t):=	\dot{H}(t)-\mu H^2(t)+bH(t),  \quad (\mu>0,b>0), 
\end{equation*} 
If $B_5(T)=0$ and $H(T)\geq k>0$ for some $T \in (\mathcal{T}_-,0)$, then the power identity (as given in Lemma \ref{proposition:3.2++})
\begin{align*}
	&0=\mathcal{P}|_{t=T}< \notag  \\
	&\left.H\left(A\dot{\phi}^2  -\left(\sqrt{12\left(1+\mu-c-\frac{b	}{k}\right) } \phi H^3 +qH \right)^2 +\left( q^2   -\frac{ 8-c }{4} \right)  H^2  \right)  \right|_{t=T} 
\end{align*}
where $c>0$ is some constant,   
\begin{align*}
	A:= 2H^2-1-\frac{\mu }{3 }+\frac{ b}{3H}+ \frac{ 8-c }{12} \AND
	q^2:= \frac{c^2}{16\bigl(1+\mu-c-\frac{b}{k}\bigr)}  ,
\end{align*}
 and $\mu, b, c, k$ satisfy
 \begin{equation*}
     1+\mu-c-\frac{b}{k}>0.
 \end{equation*}
\end{lemma}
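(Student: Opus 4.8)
The plan is to read the statement as a purely algebraic rearrangement of the power identity \emph{at the single point} $T$, upgraded to a strict inequality by the sign information available on $(\mathcal{T}_-,0)$. First I would insert the hypothesis $B_5(T)=0$, i.e. $\dot H(T)=\mu H^2(T)-bH(T)$, into Lemma~\ref{proposition:3.2++}. Since then $\dot H/(3H^2)=\mu/3-b/(3H)$ and $1+\dot H/H^2=1+\mu-b/H$, the identity becomes
\begin{equation*}
	\frac{\mathcal{P}}{H}\bigg|_{T}=\Bigl(2H^2-1-\tfrac{\mu}{3}+\tfrac{b}{3H}\Bigr)\dot{\phi}^2-8H^3\phi\dot{\phi}-12H^6\Bigl(1+\mu-\tfrac{b}{H}\Bigr)\phi^2=0 ,
\end{equation*}
whose $\dot\phi^2$-coefficient is exactly $A-\tfrac{8-c}{12}$. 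Because $H(T)>0$ (Proposition~\ref{proposition:3.8++}), the claim is equivalent to positivity of the bracket $\mathcal{Q}$ on the right-hand side of the asserted inequality, and since $\mathcal{P}/H=0$ it suffices to prove $\mathcal{Q}-\mathcal{P}/H>0$.

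Next I would expand the perfect square $\bigl(\sqrt{12(1+\mu-c-\tfrac bk)}\,\phi H^3+qH\bigr)^2$ (here $1+\mu-c-\tfrac bk>0$ is what makes the root and $q^2$ well defined). The defining relation $q^2=\tfrac{c^2}{16(1+\mu-c-b/k)}$ gives the clean identity $2q\sqrt{12(1+\mu-c-b/k)}=c\sqrt3$, so the $q^2H^2$ terms cancel against $+q^2H^2$, and the $\phi^2H^6$ coefficients recombine — using $H\ge k$ — into $12cH^6+12bH^5(H/k-1)$, whose second piece is nonnegative. After this bookkeeping,
\begin{equation*}
	\mathcal{Q}-\frac{\mathcal{P}}{H}=\tfrac{8-c}{12}\dot{\phi}^2+\bigl[12cH^6+12bH^5(\tfrac{H}{k}-1)\bigr]\phi^2+8H^3\phi\dot{\phi}-c\sqrt3\,\phi H^4-\tfrac{8-c}{4}H^2 .
\end{equation*}

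The decisive step is to dispose of the two stray terms $\tfrac{8-c}{12}\dot\phi^2$ and $-\tfrac{8-c}{4}H^2$ \emph{not} by completing a square in $\dot\phi$, but by invoking the Friedmann constraint \eqref{eq:2.5++} in the form $\dot\phi^2=3H^2-12H^3\phi\dot\phi$. This collapses $\tfrac{8-c}{12}\dot\phi^2-\tfrac{8-c}{4}H^2$ into $-(8-c)H^3\phi\dot\phi$, which combines with the surviving $+8H^3\phi\dot\phi$ to leave exactly $cH^3\phi\dot\phi$. Finally I would apply Lemma~\ref{t:Hphidphi} — legitimate here because $\phi<0$, $\dot\phi>0$, $H>0$ on $(\mathcal{T}_-,0)$ by Lemmas~\ref{lemma:3.1} and \ref{lem-sign-phi} — to bound $cH^3\phi\dot\phi=cH^2\,(H\phi\dot\phi)>-12cH^6\phi^2+c\sqrt3\,\phi H^4$. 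The terms $\mp12cH^6\phi^2$ and $\pm c\sqrt3\,\phi H^4$ cancel identically, leaving $\mathcal{Q}-\mathcal{P}/H>12bH^5(\tfrac Hk-1)\phi^2\ge0$; the strictness is inherited from Lemma~\ref{t:Hphidphi}. Hence $\mathcal{Q}>0$ and $0=\mathcal{P}<H\mathcal{Q}$ at $T$.

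I expect the main obstacle to be precisely this recombination. It is tempting, but fatal, to try to absorb the cross term $8H^3\phi\dot\phi$ by completing a square directly in $\dot\phi$ and $\phi H^3$: tracking the resulting correction forces $(c-4)^2\le0$, i.e. $c=4$, and fails for general $c$. The correct move is to spend the constant $H^2$ and the $\dot\phi^2$ contributions on the Friedmann constraint so that the leftover cross term carries coefficient exactly $c$, which is precisely what Lemma~\ref{t:Hphidphi} can neutralize. Consequently the only genuinely delicate computation is coefficient bookkeeping through the square completion (the $2q\sqrt{12m}=c\sqrt3$ identity and the $\phi^2H^6$ merge); everything else is sign-checking from $H\ge k>0$ and the established signs of $\phi$ and $\dot\phi$.
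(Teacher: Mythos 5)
Your proposal is correct and is essentially the paper's own argument run in reverse: the paper transforms $\mathcal{P}=0$ forward into the stated bound via the same ingredients — splitting the cross term as $-(8-c)H^3\phi\dot\phi - cH^3\phi\dot\phi$, estimating the $c$-piece with Lemma~\ref{t:Hphidphi}, converting the $(8-c)$-piece with the Friedmann constraint \eqref{eq:2.5++}, substituting $B_5(T)=0$, using $H\geq k$, and completing the square via $2q\sqrt{12(1+\mu-c-b/k)}=c\sqrt{3}$ — whereas you verify positivity of the difference $\mathcal{Q}-\mathcal{P}/H$ with the identical decomposition and lemmas. The only cosmetic advantage of your ordering is that the $H\geq k$ step appears as an exact remainder $12bH^5(\tfrac{H}{k}-1)\phi^2\geq 0$ rather than the paper's slightly loose ``$=$'' where $b/H$ is replaced by $b/k$.
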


\begin{proof}
The proof is based on a direct computation. Using  Lemmas \ref{proposition:3.2++} and \ref{t:Hphidphi}, we obtain 
 \begin{align*}
 	\mathcal{P}(T)  
 	=& H\left(\biggl(2H^2-1-\frac{\dot{H}}{3H^2}\biggr)\dot{\phi}^2 -( 8-c) H^3\phi \dot{\phi}   -c H^3\phi \dot{\phi}   \right.\notag  \\
 	&	\left.\left.
 	- 12H^6 \phi^2	- 12H^4 \dot{H} \phi^2\right)  \right|_{t=T  }\notag \\
 	\overset{\text{Lem. \ref{t:Hphidphi}}}{<}& H\left(\biggl(2H^2-1-\frac{\dot{H}}{3H^2}\biggr)\dot{\phi}^2 -( 8-c) H^3\phi \dot{\phi}   +12c\phi^2H^6 - c\sqrt{3} \phi H^4  \right.\notag  \\
 &	\left.\left.
 	- 12H^6 \phi^2	- 12H^4 \dot{H} \phi^2\right)  \right|_{t=T }\notag \\
 		\overset{\eqref{eq:2.5++}}{=}& H\left(\biggl(2H^2-1-\frac{\dot{H}}{3H^2}+ \frac{ 8-c }{12}\biggr)\dot{\phi}^2  -\frac{ 8-c }{4} H^2  +12(c-1)\phi^2H^6 \right.  \notag  \\
 		& \left.\left.- c\sqrt{3} \phi H^4 
 	- 12H^4 \dot{H} \phi^2\right)  \right|_{t=T }. 
 \end{align*} 
 Substituting  $\dot{H}|_{t=T}= (\mu H^2 - bH)|_{t=T}$ from  $B_5|_{t=T}=0$, we obtain 
 \begin{align*}
  	\mathcal{P}(T)  
 <& H\left(\biggl(2H^2-1-\frac{\mu }{3 }+\frac{ b}{3H}+ \frac{ 8-c }{12}\biggr)\dot{\phi}^2  +12\Bigl(c-1-\mu+\frac{b}{H}\Bigr)\phi^2H^6 \right.  \notag  \\
 & \left.\left. - c\sqrt{3} \phi H^4  	   -\frac{ 8-c }{4} H^2  \right)  \right|_{t=T  }     \notag \\
 = &H\left(\biggl(2H^2-1-\frac{\mu }{3 }+\frac{ b}{3H}+ \frac{ 8-c }{12}\biggr)\dot{\phi}^2  +12\biggl(c-1-\mu+\frac{b}{k}\biggr)\phi^2H^6 \right.  \notag  \\
 & \left.\left. - c\sqrt{3} \phi H^4  	- q^2 H^2 +q^2 H^2   -\frac{ 8-c }{4} H^2  \right)  \right|_{t=T  }        \notag \\
& \hspace{-1cm} =\left.H\left(A\dot{\phi}^2  -\left(\sqrt{12\biggl(1+\mu-c-\frac{b	}{k}\biggr) } \phi H^3 +qH \right)^2 +\left( q^2   -\frac{ 8-c }{4} \right)  H^2  \right)  \right|_{t=T }  , 
 \end{align*}
which completes the proof.
\end{proof}

Now we turn to two specific forms of $B_5$. 
\begin{lemma}\label{t:HlrbdB5.a}
	
	If there exists a constant $T_0 < 0$ such that  $B_5(t)<0$
	for all $t \in (T_0, 0)$, then
	\begin{equation*}
	H(t) > \mathfrak{L}(t)
	\quad \text{for all } t \in (T_0, 0),
\end{equation*} 
where $\mathfrak{L}(t)$ is defined in Proposition \ref{t:Hlrbd2}. 
\end{lemma}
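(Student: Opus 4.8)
The plan is to reproduce, in backward time, the comparison scheme already used for the lower bounds in Lemmas~\ref{lemma:3.2} and \ref{lemma:3.6++}. First I would unpack the hypothesis. Using the definition \eqref{e:B5.a}, the assumption $B_5(t)<0$ on $(T_0,0)$ is exactly the single differential inequality
\[
\dot H(t) < 10\,H^2(t) - b\,H(t), \qquad t\in(T_0,0),
\]
with $b=\tfrac{454\beta}{45}$ in the regime $0<\beta\le\sqrt{5/27}$ and $b=4$ in the regime $\sqrt{5/27}<\beta<\sqrt3/3$ (so $\mu=10$ throughout). This is an \emph{upper} bound on $\dot H$; run backward from $t=0$ it will yield the desired \emph{lower} bound on $H$, precisely as the inequality $B_2<0$ did in Lemma~\ref{lemma:3.6++}.

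Next I would introduce the comparison function solving the associated Bernoulli equation with the terminal value inherited from $H(0)=\beta$,
\[
\dot{\underline H} = 10\,\underline H^2 - b\,\underline H, \qquad \underline H(0)=\beta,
\]
and solve it explicitly by the substitution $u=1/\underline H$, which linearizes it to $\dot u - b\,u = -10$. Integrating this linear ODE with $u(0)=1/\beta$ and inverting reproduces, in the two regimes respectively,
\[
\underline H(t)=\frac{227\beta}{2e^{454\beta t/45}+225}
\qquad\text{and}\qquad
\underline H(t)=\frac{2\beta}{5\beta-(5\beta-2)e^{4t}},
\]
that is, exactly $\underline H(t)=\mathfrak L(t)$ as defined in Proposition~\ref{t:Hlrbd2}. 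Before comparing I would record that these closed forms are smooth with strictly positive denominators on $(T_0,0)\subset(-\infty,0)$: in the first case $2e^{454\beta t/45}+225>0$ is immediate, and in the second $5\beta-(5\beta-2)e^{4t}>5\beta-(5\beta-2)=2>0$, since $5\beta-2>0$ when $\beta>\sqrt{5/27}$ and $e^{4t}<1$ for $t<0$. Hence $\mathfrak L$ is a genuine finite solution of the comparison ODE on the whole interval, with no backward blow-up.

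Finally I would invoke the comparison theorem (Theorem~\ref{theorem:C}) on $(T_0,0)$: since $\dot H < 10H^2-bH$ while $\mathfrak L$ satisfies the corresponding equality with the same value $H(0)=\mathfrak L(0)=\beta$, the backward comparison gives $H(t)>\mathfrak L(t)$ for all $t\in(T_0,0)$, which is the claim. I do not anticipate a real obstacle, as the structure is identical to the earlier lower-bound lemmas; the only points demanding care are (i) the direction of the comparison—an upper bound on $\dot H$ becomes a lower bound on $H$ only because time runs backward from $t=0$, so I would phrase the application of Theorem~\ref{theorem:C} exactly as in Lemma~\ref{lemma:3.6++}—and (ii) carrying out the two Bernoulli integrations so that the resulting closed forms coincide verbatim with the $\mathfrak L(t)$ of Proposition~\ref{t:Hlrbd2}.
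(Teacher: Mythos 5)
Your proposal is correct and follows essentially the same route as the paper: interpret $B_5<0$ as the differential inequality $\dot H < 10H^2 - bH$, introduce the comparison function solving the corresponding Riccati/Bernoulli equation with $\underline{H}_-(0)=\beta$, solve it explicitly to recover $\mathfrak{L}(t)$, and apply Theorem~\ref{theorem:C} in backward time to conclude $H>\mathfrak{L}$ on $(T_0,0)$. Your additional verifications (the explicit Bernoulli integration and the positivity of the denominators, ensuring $\mathfrak{L}$ has no backward blow-up) are sound details that the paper's terser proof leaves implicit.
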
    

 	\begin{proof}
 	The proof is similar to that of Lemma \ref{lemma:3.2}. Define a comparison function $\underline{H}_-(t)$ satisfying the initial value problem
 	\begin{equation}
 		\dot{\underline{H}}_- = \begin{cases}
 			 10 \underline{H}_-^2(t)- \frac{454 \beta}{45} \underline{H}_- , \quad &\text{if }0< \beta \leq \sqrt{\frac{5}{27}} , \\
 		 10 \underline{H}_-^2(t)-4\underline{H}_-,  &\text{if }\sqrt{\frac{5}{27}}< \beta <\frac{\sqrt{3}}{3}  ,  
 		\end{cases}  \quad \text{with}\quad  \underline{H}_-(0) = \beta.
 		\label{e:B5.b+}
 	\end{equation}
 Solving \eqref{e:B5.b+} yields $
    \underline{H}_-=\mathfrak{L}(t)$.  
 	Theorem \ref{theorem:C} ensures that 
 	$H(t)>\underline{H}_-(t)$,  thus completes the proof. 
 \end{proof}

 \begin{lemma}
	\label{t:HlbdB5.b}
	If $B_5(0)<0$ and $\beta\in(0, \sqrt{3}/3)$, we have	$B_5(t) < 0$,  for all $t \in (\mathcal{T}_-,0).$
	
\end{lemma}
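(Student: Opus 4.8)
The plan is to replay, almost verbatim, the contradiction-and-continuation scheme used for $B_2$ in Lemma~\ref{lemma:3.7++}, the only genuinely new ingredient being the refined power-identity estimate of Lemma~\ref{t:PI2} in place of a direct evaluation of $\mathcal P$. Throughout we work under the standing initial conditions \eqref{eq:2.12}--\eqref{eq:2.11} (which in particular force $B_5(0)<0$), so that on $(\mathcal T_-,0)$ we have $H>0$ (Proposition~\ref{proposition:3.8++}), $\phi<0$ (Lemma~\ref{lem-sign-phi}), $\dot\phi$ given by the $\iota=0$ branch \eqref{eq:3.4} (Lemma~\ref{lemma:3.1}), and the upper bound $H<1/\sqrt2$ (Proposition~\ref{proposition:3.10}); these supply exactly the hypotheses needed to apply Lemmas~\ref{t:Hphidphi} and~\ref{t:PI2}.

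First I would set up the continuation. Since $B_5(0)<0$ and $B_5$ is continuous (by \eqref{e:B5.a} and Proposition~\ref{t:locext}), there is some $T\in[\mathcal T_-,0)$ with $B_5<0$ on $(T,0)$; set
\begin{equation*}
  T_{\text{min}}:=\inf\bigl\{T\in[\mathcal T_-,0)\ \big|\ B_5(t)<0\ \text{for all}\ t\in(T,0)\bigr\}.
\end{equation*}
It suffices to show $T_{\text{min}}=\mathcal T_-$. Suppose instead $T_{\text{min}}>\mathcal T_-$; then continuity forces $B_5(T_{\text{min}})=0$. Lemma~\ref{t:HlrbdB5.a} applied on $(T_{\text{min}},0)$ gives $H>\mathfrak L$ there, and letting $t\to T_{\text{min}}^+$ yields $H(T_{\text{min}})\ge\mathfrak L(T_{\text{min}})>k$, where $k=\beta$ in the regime $0<\beta\le\sqrt{5/27}$ and $k=\tfrac25$ in the regime $\sqrt{5/27}<\beta<\sqrt3/3$. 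In particular $H(T_{\text{min}})\ge k>0$, which together with $H<1/\sqrt2$ pins $H(T_{\text{min}})$ to the window $[k,1/\sqrt2]$.

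Next I would invoke Lemma~\ref{t:PI2} at $T=T_{\text{min}}$ with $\mu=10$ and $(b,k)=(\tfrac{454\beta}{45},\beta)$ or $(4,\tfrac25)$ according to the regime, obtaining
\begin{equation*}
  0=\mathcal P(T_{\text{min}})<\left.H\left(A\dot\phi^2-\left(\sqrt{12\Bigl(1+\mu-c-\tfrac{b}{k}\Bigr)}\,\phi H^3+qH\right)^2+\left(q^2-\tfrac{8-c}{4}\right)H^2\right)\right|_{t=T_{\text{min}}}.
\end{equation*}
The contradiction is manufactured by choosing a constant $c>0$ — for instance $c=\tfrac45$ in the first regime and $c=\tfrac12$ in the second — for which every term on the right is non-positive: one checks $1+\mu-c-\tfrac{b}{k}>0$ (so $q$ is well defined and the middle term is a genuine square, hence $\le0$), $q^2\le\tfrac{8-c}{4}$ (so the last term is $\le0$), and, crucially, $A\le0$ (so $A\dot\phi^2\le0$). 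Since $H(T_{\text{min}})>0$, the right-hand side is then $\le0$, contradicting the strict inequality; hence $T_{\text{min}}=\mathcal T_-$, which is the claim.

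The main obstacle is establishing $A\le0$, where $A=2H^2-1-\tfrac{\mu}{3}+\tfrac{b}{3H}+\tfrac{8-c}{12}$ mixes $2H^2$, controlled from above by $H<1/\sqrt2$, with $\tfrac{b}{3H}$, controlled from above by $H\ge k$; the two factors cannot be bounded independently, so one must maximize the combination $2H^2+\tfrac{b}{3H}$ over $H\in[k,1/\sqrt2]$ in each $\beta$-regime (its interior extremum sits near $H=(b/12)^{1/3}$, which lands near or just past the right endpoint in the relevant ranges, so the maximum is attained at $H=k$). The numerical constants $\mu=10$, $b=\tfrac{454\beta}{45}$, $b=4$ and the thresholds $k=\beta$, $k=\tfrac25$ are calibrated precisely so that a single $c$ simultaneously satisfies $A\le0$, $q^2\le\tfrac{8-c}{4}$ and $1+\mu-c-\tfrac{b}{k}>0$; confirming that such a $c$ exists in both regimes (the admissible windows being roughly $c\in[\tfrac45,\tfrac{41}{45})$ in the first regime and a subinterval of $(0,1)$ in the second) is the delicate quantitative step.
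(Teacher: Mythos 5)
Your proposal is correct and follows essentially the same route as the paper: the same $T_{\mathrm{min}}$ continuation/contradiction scheme, the bound $H(T_{\mathrm{min}})\ge k$ from Lemma~\ref{t:HlrbdB5.a}, and Lemma~\ref{t:PI2} with $\mu=10$ and the same $(b,k)$ pairs, concluding from $A\le 0$, the perfect square, and the strictly negative last term. The only deviations are cosmetic: you pick $c=\tfrac12$ instead of the paper's $c=\tfrac49$ in the second regime (both choices satisfy all three constraints), and your endpoint maximization of $2H^2+\tfrac{b}{3H}$ over $[k,1/\sqrt{2}]$ is precisely the content the paper relegates to the appendix Lemmas~\ref{t:pre1} and~\ref{t:pre2}.
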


\begin{proof}
	The proof is similar to that of Lemma \ref{lemma:3.3}. 
	Since $ B_5(0) < 0 $ and $ B_5(t) $ is continuous (by  Proposition \ref{t:locext}), there exists a constant $ T \in [\mathcal{T}_-, 0) $ such that $ B_5(t) < 0 $ for all $ t \in (T, 0) $. Define  
	\begin{equation*}
		T_{\text{min}} := \inf\{T \in [\mathcal{T}_-, 0) \mid B_5(t) < 0 \text{ for all } t \in (T, 0)\}.
	\end{equation*}
	
	As the proof of Lemma \ref{lemma:3.3}, we prove $ T_{\text{min}} = \mathcal{T}_- $ by contradiction. Suppose $ T_{\text{min}} > \mathcal{T}_- $. Since $B_5$ is continuous, we must have $B_5(T_{\text{min}}) = 0$.

 Lemma \ref{t:HlrbdB5.a} implies $B_5<0$, the negativity of $B_5$ on  $(T_{\text{min}}, 0)$ implies  
	\begin{equation*}
	H(t) > \mathfrak{L}(t)
	\quad \text{for all } t \in (T_\text{min}, 0). 
\end{equation*}

	Taking the limit as $t \to T_{\text{min}}^+$ and using the continuity of $H$, we obtain
	\begin{equation*}
		H(T_{\text{min}}) \geq k:= \begin{cases}
		 \beta , \quad &\text{if }0< \beta \leq \sqrt{\frac{5}{27}} , \\
		 \frac{2}{5} , \quad &\text{if } \sqrt{\frac{5}{27}}< \beta <\frac{\sqrt{3}}{3}  .
		\end{cases} 
	\end{equation*}

	For the specific $B_5$ defined in \eqref{e:B5.a} and taking  $\mu, b, c$ as follows, we obtain $q$ by Lemma \ref{t:PI2}, 
	\begin{align*}
		\begin{cases}
			\mu= 10,\quad b= \frac{454 \beta}{45},\quad c=\frac{4}{5} ,   \implies q^2= \frac{9}{25} &\text{for }0< \beta \leq \sqrt{\frac{5}{27}},  \\
			\mu= 10,\quad b= 4,\quad c= \frac{4}{9}   \implies q^2= \frac{1}{45}  &\text{for }\sqrt{\frac{5}{27}}< \beta <\frac{\sqrt{3}}{3}   .
		\end{cases} 
	\end{align*}

In view of $ B_5(T_{\text{min}}) = 0 $, and evaluating Lemma \ref{t:PI2} at $ t = T_{\text{min}} $ where $B_5$ is defined by \eqref{e:B5.a}, we have  
\begin{align*}
	&0=\mathcal{P}|_{t=T_\text{min}}< \notag  \\
&{\footnotesize	 \begin{cases}  
	\left.H\left(\biggl(2 H^2+\frac{454 \beta }{135 H}-\frac{56}{15}\biggr)\dot{\phi}^2  -\left(\sqrt{\frac{4}{3} } \phi H^3 +\frac{3}{5} H \right)^2  -\frac{36}{25}  H^2  \right)  \right|_{t=T_{\text{min}} }     , \quad &\text{if } 0< \beta \leq \sqrt{\frac{5}{27}}  , \\
	\left.H\left(\biggl(2 H^2+\frac{4}{3 H}-\frac{100}{27}\biggr)\dot{\phi}^2  -\left(\sqrt{\frac{20}{3} } \phi H^3 +\frac{1}{3 \sqrt{5}}  H \right)^2   -\frac{28}{15}    H^2  \right)  \right|_{t=T_{\text{min}} }, &\text{if }\sqrt{\frac{5}{27}}< \beta <\frac{\sqrt{3}}{3}  .
	\end{cases}  }
\end{align*}

Lemmas \ref{t:pre1} and \ref{t:pre2} imply $\mathcal{P}|_{t=T_\text{min}}<0$, leading to the contradiction $ 0 < 0 $.   
	Therefore, $ T_{\text{min}} = \mathcal{T}_- $, completing the proof.  
\end{proof}

\begin{lemma}\label{t:dH+H3}
	Under the initial conditions \eqref{eq:2.12}--\eqref{eq:2.11}, we have $B_5(0)< 0$. 
\end{lemma}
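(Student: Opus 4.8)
The plan is to reduce the statement to elementary polynomial inequalities in the single parameter $\beta$ by inserting the known initial values into the piecewise definition \eqref{e:B5.a} of $B_5$. The key input is already available: by Lemma \ref{t:dataB} (in particular \eqref{eq:2.17++}), the initial conditions \eqref{eq:2.12}--\eqref{eq:2.11} force $H(0)=\beta$ and $\dot H(0)=6\beta^4-3\beta^2$. I would therefore evaluate each of the two branches of $B_5$ at $t=0$ and verify the sign of the resulting polynomial on the corresponding subinterval of $\bigl(0,\tfrac{\sqrt3}{3}\bigr)$.

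For the first branch $0<\beta\le\sqrt{5/27}$, substituting yields
\[
B_5(0)=6\beta^4-3\beta^2-10\beta^2+\tfrac{454}{45}\beta^2=\beta^2\Bigl(6\beta^2-\tfrac{131}{45}\Bigr),
\]
so it suffices to note that $\beta^2\le\tfrac{5}{27}$ gives $6\beta^2\le\tfrac{10}{9}=\tfrac{50}{45}<\tfrac{131}{45}$, whence the bracket is negative and $B_5(0)<0$. For the second branch $\sqrt{5/27}<\beta<\tfrac{\sqrt3}{3}$, substituting gives
\[
B_5(0)=6\beta^4-13\beta^2+4\beta=\beta\,g(\beta),\qquad g(\beta):=6\beta^3-13\beta+4,
\]
and since $\beta>0$ the claim reduces to $g(\beta)<0$ on this interval. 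Here I would argue by monotonicity: $g'(\beta)=18\beta^2-13<0$ whenever $\beta^2<\tfrac13<\tfrac{13}{18}$, so $g$ is strictly decreasing on $\bigl(\sqrt{5/27},\tfrac{\sqrt3}{3}\bigr)$, and it is enough to check the left endpoint. A direct computation using $\beta_0^3=\tfrac{5}{27}\beta_0$ gives $g(\sqrt{5/27})=-\tfrac{107}{9}\sqrt{5/27}+4<0$, equivalent to the true numerical inequality $107\sqrt{15}>324$; monotonicity then propagates negativity across the whole interval.

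The computation is essentially routine, and I expect no genuine obstacle beyond careful bookkeeping of the fractions $\tfrac{454}{45}$ and $\tfrac{131}{45}$. The only mild subtlety is the second branch, where the cubic $g$ is not manifestly sign-definite: the decisive observation is that the critical point $\beta=\sqrt{13/18}$ of $g$ lies strictly to the \emph{right} of the admissible window, so that $g$ is monotone there and the verification collapses to a single endpoint evaluation rather than a full root analysis.
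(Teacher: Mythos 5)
Your proposal is correct and follows the paper's proof exactly: both substitute $H(0)=\beta$ and $\dot H(0)=6\beta^4-3\beta^2$ from \eqref{eq:2.17++} into the two branches of \eqref{e:B5.a}, obtaining $\beta^2\bigl(6\beta^2-\tfrac{131}{45}\bigr)$ and $6\beta^4-13\beta^2+4\beta$ respectively, and verify negativity on the corresponding $\beta$-ranges. The paper simply asserts these sign conditions without detail, whereas you supply the elementary verification (the bound $6\beta^2\le\tfrac{50}{45}$ in the first branch, and monotonicity of $g(\beta)=6\beta^3-13\beta+4$ plus the endpoint check $107\sqrt{15}>324$ in the second), all of which is accurate.
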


\begin{proof} 
		Under the initial conditions \eqref{eq:2.12}--\eqref{eq:2.11},  \eqref{eq:2.17++} implies,    
	\begin{align*} 	
		B_5(0) =		 
		\begin{cases}
			 6 \beta ^4-\frac{131 \beta ^2}{45}<0,
			\quad &\text{if }0< \beta \leq \sqrt{\frac{5}{27}} , \\
			6 \beta ^4-13 \beta ^2+4 \beta<0 ,
			 \quad &\text{if }\sqrt{\frac{5}{27}}< \beta <\frac{\sqrt{3}}{3}  . 
		\end{cases}  
	\end{align*}    
	We complete the proof.  	 	 
\end{proof}

We are now in a position to prove  Proposition \ref{t:Hlrbd2}. 
\begin{proof}[Proof of Proposition \ref{t:Hlrbd2}] 
	The result follows directly from Lemmas \ref{t:HlrbdB5.a}--\ref{t:dH+H3}.
\end{proof}

\subsection{Estimates for $\phi$} 

Having established the bounds for $H$ in the previous section, we now turn our attention to estimating $\phi$. Throughout the proof, we will \textit{frequently make use of the positivity of $H(t)$ for $t \in (\mathcal{T}_-, \mathcal{T}_+)$}, which follows from the initial data $\beta > 0$ and Propositions \ref{proposition:3.4} and \ref{proposition:3.8++}, as well as \textit{the sign of $\phi$} as given in Lemma \ref{lem-sign-phi}, without explicitly stating these facts each time.

\subsubsection{Lower Bound for $\phi$ on $t \in (0, \mathcal{T}_+)$}

We begin with establishing a lower bound for $\phi$ on the interval $t \in (0, \mathcal{T}_+)$.

\begin{proposition}
	\label{proposition:4.2}
	Under the initial conditions \eqref{eq:2.12}--\eqref{eq:2.11},  or alternatively \eqref{eq:1.13++++}--\eqref{1.14++++}, $\phi(t)$ satisfies
	\begin{equation*}
		\phi(t) > \frac{-1 + \sqrt{1 + \frac{24\beta^2}{5}\ln{(1+5\beta t)}}}{4\sqrt{3}\beta^2}, \quad \text{for all } t \in (0,\mathcal{T}_+).
	\end{equation*}
\end{proposition}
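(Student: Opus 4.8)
The plan is to reduce the claim to a first-order, separable differential inequality for $\phi$ that can be integrated in closed form and then compared against the stated lower bound $\underline{\phi}(t)$. First I would rationalize the expression for $\dot{\phi}$ from Lemma~\ref{lemma:3.1} by multiplying by the conjugate, which gives
\[
\dot{\phi} = \frac{3H^2}{6\phi H^3 + \sqrt{(6\phi H^3)^2 + 3H^2}}.
\]
For $t \in (0,\mathcal{T}_+)$ we have $H > 0$ (Propositions~\ref{proposition:3.4} and \ref{proposition:3.8++}) and $\phi > 0$ (Lemma~\ref{lem-sign-phi}), so the strict subadditivity $\sqrt{a+b} < \sqrt{a}+\sqrt{b}$ (valid since both $36\phi^2H^6 > 0$ and $3H^2 > 0$) yields $\sqrt{(6\phi H^3)^2+3H^2} < 6\phi H^3 + \sqrt{3}\,H$. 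Since enlarging the denominator shrinks the positive fraction, this produces the strict lower bound
\[
\dot{\phi} > \frac{3H^2}{12\phi H^3 + \sqrt{3}\,H} = \frac{3H}{12\phi H^2 + \sqrt{3}}, \qquad t \in (0,\mathcal{T}_+).
\]

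The conceptual heart of the argument is then to feed in the two-sided control on $H$ in \emph{opposite directions}: the lower bound $H > \tfrac{\beta}{1+5\beta t}$ from Proposition~\ref{proposition:3.4} into the numerator, and the constant upper bound $H < \beta$ into the denominator. The latter follows from Proposition~\ref{proposition:3.5}, since $\tfrac{1}{t+1/\beta} < \beta$ for $t > 0$. Both substitutions decrease the fraction, so that
\[
\dot{\phi} > \frac{3\beta/(1+5\beta t)}{12\beta^2\phi + \sqrt{3}} =: G(t,\phi), \qquad t \in (0,\mathcal{T}_+).
\]
I would then introduce the comparison function $\underline{\phi}$ solving the initial value problem $\dot{\underline{\phi}} = G(t,\underline{\phi})$ with $\underline{\phi}(0) = 0 = \phi(0)$. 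Separating variables and integrating gives $6\beta^2\underline{\phi}^2 + \sqrt{3}\,\underline{\phi} = \tfrac{3}{5}\ln(1+5\beta t)$; solving this quadratic and retaining the nonnegative root produces exactly
\[
\underline{\phi}(t) = \frac{-1+\sqrt{1+\frac{24\beta^2}{5}\ln(1+5\beta t)}}{4\sqrt{3}\,\beta^2},
\]
the claimed bound. Applying the comparison theorem (Theorem~\ref{theorem:C}) to $\dot{\phi} > G(t,\phi)$ versus $\dot{\underline{\phi}} = G(t,\underline{\phi})$ then gives $\phi(t) > \underline{\phi}(t)$ on $(0,\mathcal{T}_+)$.

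The main subtlety is the strictness of the final inequality: at $t = 0$ every bound invoked above degenerates to an equality ($\phi(0) = \underline{\phi}(0) = 0$, $H(0) = \beta$, and indeed $\dot{\phi}(0) = \dot{\underline{\phi}}(0) = \sqrt{3}\beta$), so $\dot{\phi} > G(t,\phi)$ is strict only for $t > 0$. This is precisely the equal-initial-data, strict-differential-inequality regime already handled in Lemma~\ref{lemma:3.2}, and it is harmless here: setting $w = \phi - \underline{\phi}$ and $\varepsilon(t) := \dot{\phi} - G(t,\phi) > 0$, the mean value theorem gives $\dot{w} = \partial_z G(t,\xi)\,w + \varepsilon(t)$, and because $G$ is strictly decreasing in its second argument the homogeneous coefficient is nonpositive, so the variation-of-constants representation of $w$ with the positive forcing $\varepsilon$ forces $w(t) > 0$ for all $t > 0$. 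The only computational care needed is the choice of the \emph{constant} upper bound $H < \beta$ rather than the sharper time-dependent one, since this is exactly what keeps the comparison ODE separable and makes it integrate to the stated closed form.
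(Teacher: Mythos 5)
Your proposal is correct and follows essentially the same route as the paper: the same lower bound $\dot{\phi} > \sqrt{3}H/(1+4\sqrt{3}\phi H^2)$ (the paper gets it from inequality \eqref{eq:B1}, whose proof is exactly your rationalization), the same asymmetric substitution of the $H$-bounds from Propositions~\ref{proposition:3.4} and~\ref{proposition:3.5}, and the same separable comparison ODE solved to the identical quadratic. Your extra variation-of-constants argument for strictness at $t=0$ addresses a point the paper leaves implicit in its citation of Theorem~\ref{theorem:C}, but it does not change the substance of the proof.
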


\begin{proof}
	Using \eqref{eq:3.4}, \eqref{eq:B1}, and the facts that $H > 0$ and $\phi > 0$ on $(0, \mathcal{T}_+)$, we obtain
	\begin{equation}
		\dot{\phi} > \frac{\sqrt{3}H}{1 + 4\sqrt{3}\phi H^2}.
        \label{eq4.2}
	\end{equation}
	Applying the bounds for $H$ from Propositions \ref{proposition:3.4} and \ref{proposition:3.5} (by the initial conditions \eqref{eq:2.12}--\eqref{eq:2.11} or \eqref{eq:1.13++++}--\eqref{1.14++++}), we find
	\begin{equation*}
		\dot{\phi} > \frac{\sqrt{3}}{5t + \frac{1}{\beta}} \cdot \frac{1}{1 + 4\sqrt{3}\phi\beta^2}, \quad \text{with} \quad \phi(0)=0 \quad \text{(by \eqref{eq:2.12})}.
	\end{equation*}
	Define $\underline{\phi}_+$ to be the solution to the ODE
	\begin{equation}
		\dot{\underline{\phi}}_+ = \frac{\sqrt{3}}{5t + \frac{1}{\beta}} \cdot \frac{1}{1 + 4\sqrt{3}\underline{\phi}_+ \beta^2}, \quad \text{with} \quad  \underline{\phi}_+(0) = 0.
		\label{eq:4.11}
	\end{equation}
	Considering Theorem \ref{theorem:C}, it follows that $\phi(t) > \underline{\phi}_+(t)$ for all $t \in (0, \mathcal{T}_+)$.
	
	To solve \eqref{eq:4.11}, we separate variables and integrate to obtain 
	\begin{equation*}
		2\sqrt{3}\beta^2 \underline{\phi}_+^2 + \underline{\phi}_+ = \frac{\sqrt{3}}{5}\ln(1 + 5\beta t).
	\end{equation*}
	Solving this quadratic equation, subject to the initial condition $\underline{\phi}_+(0) = 0$, yields
	\begin{equation*}
		\underline{\phi}_+(t) = \frac{-1 + \sqrt{1 + \frac{24\beta^2}{5}\ln(1 + 5\beta t)}}{4\sqrt{3}\beta^2},
	\end{equation*}
	which completes the proof.
\end{proof}

    \subsubsection{Lower bound of $\phi$ for $t \in (\mathcal{T}_-,0)$}
We now establish a lower bound for $\phi$ on  $t \in (\mathcal{T}_-, 0)$.

	\begin{proposition}
		\label{proposition:3.13++++}
		Under the initial conditions \eqref{eq:2.12}--\eqref{eq:2.11}, $\phi(t)$ satisfies
		\begin{equation*}
			\phi(t) > \frac{\sqrt{3}}{12}(1-e^{-12t}) ,  \quad \text{for all } t \in (\mathcal{T}_-,0).
		\end{equation*}
	\end{proposition}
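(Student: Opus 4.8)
The plan is to reduce the claim to a single first-order linear differential inequality for $\phi$ and then invoke the comparison theorem, exactly in the spirit of the lower-bound arguments of \S\ref{s:Hlrbd1}. Throughout the interval $(\mathcal{T}_-,0)$ I may freely use that $H>0$ (Proposition~\ref{proposition:3.8++}), that $H<\tfrac{1}{\sqrt2}<1$ (Proposition~\ref{proposition:3.10}), that $\phi<0$ (Lemma~\ref{lem-sign-phi}), and that $\dot\phi$ is given by the $\iota=0$ branch \eqref{eq:3.4} (Lemma~\ref{lemma:3.1}).

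First I would rewrite \eqref{eq:3.4} in a form amenable to estimation. Factoring $\sqrt3\,H$ out of the square root and setting $v:=-\phi H^2>0$, the identity \eqref{eq:3.4} becomes
\[
	\dot\phi = H\Bigl(6v+\sqrt3\,\sqrt{1+12v^2}\Bigr).
\]
Applying the elementary bound $\sqrt{1+12v^2}\le 1+2\sqrt3\,v$ (so that $\sqrt3\,\sqrt{1+12v^2}\le \sqrt3+6v$) collapses the bracket to $\sqrt3+12v$, giving $\dot\phi \le \sqrt3\,H-12\phi H^3$. Finally, using $0<H<1$ together with $\phi<0$ — so that $\sqrt3\,H<\sqrt3$ and $-12\phi H^3<-12\phi$ — yields the clean pointwise inequality
\[
	\dot\phi < \sqrt3-12\phi, \qquad\text{for all } t\in(\mathcal{T}_-,0).
\]

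With this in hand I would introduce the comparison function $\underline{\phi}_-$ solving the linear ODE $\dot{\underline{\phi}}_- = \sqrt3-12\underline{\phi}_-$ with $\underline{\phi}_-(0)=0$, whose explicit solution is $\underline{\phi}_-(t)=\tfrac{\sqrt3}{12}(1-e^{-12t})$, precisely the asserted lower bound. Since $\phi(0)=0=\underline{\phi}_-(0)$, with $\dot\phi<\sqrt3-12\phi$ while $\dot{\underline{\phi}}_-=\sqrt3-12\underline{\phi}_-$, the comparison theorem (Theorem~\ref{theorem:C}), applied in the backward direction exactly as in Lemma~\ref{lemma:3.6++}, gives $\phi(t)>\underline{\phi}_-(t)$ for all $t\in(\mathcal{T}_-,0)$. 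Equivalently, one may argue directly: the difference $D:=\phi-\underline{\phi}_-$ satisfies $\dot D<-12D$ with $D(0)=0$, so $e^{12t}D(t)$ is strictly decreasing and hence strictly positive for $t<0$.

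The main obstacle is not the comparison step — which is routine given Theorem~\ref{theorem:C} — but arranging the algebra so that the right-hand side of the differential inequality is \emph{exactly} the linear function $\sqrt3-12\phi$ whose flow produces the stated exponential bound. The coefficient $12$ must survive in two places (the term $6v$ coming from $-6\phi H^3$, and the second $6v$ produced by $\sqrt3\,\sqrt{1+12v^2}$), which is what forces the particular form of the estimate; hence the inequality $\sqrt{1+12v^2}\le 1+2\sqrt3\,v$ must be applied sharply rather than wastefully. One should also confirm that the strict signs $\phi<0$ and $H<1$ are genuinely available on the \emph{open} interval — which they are by the cited results — so that no boundary degeneracy spoils the strictness of the conclusion.
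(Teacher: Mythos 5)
Your proof is correct and follows essentially the same route as the paper: both derive $\dot\phi < \sqrt{3}H - 12\phi H^3$ from the $\iota=0$ branch \eqref{eq:3.4} via the inequality \eqref{eq:B2}, reduce to $\dot\phi < \sqrt{3} - 12\phi$ using $0<H<1$ and $\phi<0$, and conclude by comparison with the linear ODE whose solution is $\tfrac{\sqrt{3}}{12}(1-e^{-12t})$. Your substitution $v=-\phi H^2$ and the direct monotonicity argument for $e^{12t}(\phi-\underline{\phi}_-)$ are only cosmetic variations on (and a nice rigorization of) the paper's comparison step.
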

	
	\begin{proof}
		From  \eqref{eq:3.4}, \eqref{eq:B2} and the facts that $H>0$ and $\phi < 0$ on $(\mathcal{T}_-,0)$, it follows that
		\begin{equation}
			\dot{\phi}  < -12 \phi  H^3  + \sqrt{3}H , \quad \text{for all } t\in (\mathcal{T}_-,0).
              \label{eq3.13}
		\end{equation} 
	Using the constant upper bound for $H$ given in Proposition \ref{proposition:3.10}, we have $H<\frac{\sqrt{2}}{2}<1$, then we obtain
		\begin{equation*}
			\dot{\phi}  < -12 \phi   + \sqrt{3}, \quad \text{for all } t\in (\mathcal{T}_-,0).
		\end{equation*}
Define a comparison function $\underline{\phi}_{-}(t)$ as the solution to the ODE
		\begin{equation*}
			\dot{\underline{\phi}}_-(t) = -12 \underline{\phi}_-(t)  + \sqrt{3}, \quad \text{with} \quad \underline{\phi}_-(0) = 0. 
		\end{equation*} 
		
	This equation can be solved explicitly via the integrating factor method, yielding
		\begin{equation}
			\underline{\phi}_-(t) =\frac{\sqrt{3}}{12}(1-e^{-12t}) . 
            \label{eq:underlinephi-}
		\end{equation}  
	Considering Theorem \ref{theorem:C}, we conclude that $\phi(t)>\underline{\phi}_-(t)$ for all  $(\mathcal{T}_-,0)$.
	\end{proof}

\subsubsection{Upper bound of $\phi$  for $t \in (0, \mathcal{T}_+)$}
In this subsection, we establish an upper bound for $\phi$ on the interval $t \in (0, \mathcal{T}_+)$.

\begin{proposition}
	\label{proposition:3.14++++}
	Under the initial conditions \eqref{eq:2.12}--\eqref{eq:2.11}, or alternatively \eqref{eq:1.13++++}--\eqref{1.14++++}, $\phi$ satisfies
	\begin{equation*}
		\phi(t) < \sqrt{3} \ln(\beta t + 1)+\alpha, \quad \text{for all }  t \in (0, \, \mathcal{T}_+).
	\end{equation*}
    We note that $\alpha=0$ for the data \eqref{eq:2.12}. 
\end{proposition}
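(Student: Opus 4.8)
The plan is to extract a pointwise upper bound for $\dot\phi$ that no longer involves $\phi$ itself, and then integrate it directly. First I would record that on $(0,\mathcal{T}_+)$ we have $H>0$ (Propositions \ref{proposition:3.4} and \ref{proposition:3.8++}) and $\phi>0$: the latter follows because Lemma \ref{lemma:3.1} gives $\dot\phi>0$ throughout $(\mathcal{T}_-,\mathcal{T}_+)$ under either admissible data set, so $\phi$ is strictly increasing and $\phi(t)>\phi(0)=\alpha\ge 0$. With these signs in hand, I apply the elementary inequality \eqref{eq:B1} (i.e. $\sqrt{a^2+b^2}\le a+b$ for $a,b\ge 0$) to the $\iota=0$ branch \eqref{eq:3.4}, with $a=6\phi H^3\ge 0$ and $b=\sqrt{3}H$, to obtain
\[
	\dot\phi = -6\phi H^3 + \sqrt{(6\phi H^3)^2+3H^2} \;\le\; -6\phi H^3 + \bigl(6\phi H^3 + \sqrt{3}H\bigr) = \sqrt{3}H .
\]
Equivalently, rationalizing \eqref{eq:3.4} gives $\dot\phi = 3H^2\big/\bigl(6\phi H^3 + \sqrt{(6\phi H^3)^2+3H^2}\bigr)$, whose denominator exceeds $\sqrt{3}H$ because $6\phi H^3>0$, yielding the same strict bound $\dot\phi<\sqrt{3}H$.

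The second step inserts the upper bound for $H$ from Proposition \ref{proposition:3.5}, namely $H(t) < \frac{1}{t+1/\beta} = \frac{\beta}{\beta t+1}$, which gives
\[
	\dot\phi(t) < \frac{\sqrt{3}\,\beta}{\beta t+1}, \qquad t\in(0,\mathcal{T}_+).
\]
Since the right-hand side is an explicit function of $t$ alone, I finish by integrating from $0$ to $t$, using $\phi(0)=\alpha$:
\[
	\phi(t) = \alpha + \int_0^t \dot\phi(s)\,ds < \alpha + \int_0^t \frac{\sqrt{3}\,\beta}{\beta s+1}\,ds = \alpha + \sqrt{3}\ln(\beta t+1),
\]
which is exactly the claimed bound; setting $\alpha=0$ recovers the statement for the data \eqref{eq:2.12}. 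In keeping with the style of the other estimates, this last step may instead be phrased via the comparison theorem (Theorem \ref{theorem:C}) applied to $\bar\phi_+$ solving $\dot{\bar\phi}_+ = \frac{\sqrt{3}\beta}{\beta t+1}$ with $\bar\phi_+(0)=\alpha$, whose explicit solution is $\sqrt{3}\ln(\beta t+1)+\alpha$.

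The argument is essentially routine once $\dot\phi<\sqrt{3}H$ is in place, so the only delicate point is sign bookkeeping across the two data classes. The main (minor) obstacle is verifying $\phi>0$ on $(0,\mathcal{T}_+)$ under the broader data \eqref{eq:1.13++++}--\eqref{1.14++++}, since Lemma \ref{lem-sign-phi} is stated only for $\alpha=0$; I would resolve this by the monotonicity observation above, giving $\phi(t)>\alpha\ge 0$. It is worth emphasizing that the crude estimate $\dot\phi<\sqrt{3}H$ deliberately discards the $\phi$-dependent term $6\phi H^3$ in the denominator, which is precisely what decouples the final integration — a sharper bound retaining that term would reintroduce $\phi$ on the right-hand side and force the full comparison-ODE machinery used for the \emph{lower} bound in Proposition \ref{proposition:4.2}.
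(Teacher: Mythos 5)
Your proof is correct and takes essentially the same route as the paper: the key decoupling bound $\dot\phi<\sqrt{3}H$ (which the paper derives from \eqref{eq:B2}---your citation of \eqref{eq:B1} is a slip, though your rationalization of \eqref{eq:3.4} makes the step self-contained anyway), followed by the upper bound of Proposition \ref{proposition:3.5} and an integration that the paper phrases via the comparison ODE \eqref{eq:barphi+}. Your extra care in establishing $\phi>0$ under the broader data \eqref{eq:1.13++++}--\eqref{1.14++++} via monotonicity (since Lemma \ref{lem-sign-phi} is stated only for $\alpha=0$) is a worthwhile addition, as the paper asserts this sign without comment in that case.
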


\begin{proof}
	Applying \eqref{eq:B2} to \eqref{eq:3.4}, with the help of  the facts that $H > 0$ and $\phi > 0$ on $(0, \mathcal{T}_+)$,   it follows that
	\begin{equation}
		\dot{\phi} < \sqrt{3}H \overset{\text{Prop. \ref{proposition:3.5}}}{<}\frac{\sqrt{3}}{t + \frac{1}{\beta}}, \quad \text{for all } t\in (0, \, \mathcal{T}_+),
\label{eq3.14}
	\end{equation}  
where we note that Proposition \ref{proposition:3.5} is guaranteed by initial conditions \eqref{eq:2.12}--\eqref{eq:2.11} or \eqref{eq:1.13++++}--\eqref{1.14++++}.

	Define a comparison function $\overline{\phi}_+$ as the solution to the ODE
	\begin{equation}
		\dot{\overline{\phi}}_+(t) = \frac{\sqrt{3}}{t + \frac{1}{\beta}}, \quad \text{with} \quad 	\overline{\phi}_+(0) = \alpha \implies \overline{\phi}_+(t) = \sqrt{3} \ln(\beta t + 1)+\alpha.
        \label{eq:barphi+}
	\end{equation}  
We remark that under the initial condition \eqref{eq:2.12} given in Theorem \ref{theorem:2.1}, $\alpha=0$.
\end{proof}

\subsubsection{Upper bound of $\phi$  for $t \in (\mathcal{T}_-,0)$}
Finally, we derive an upper bound for $\phi$ on the interval $t \in (\mathcal{T}_-, 0)$.
 
\begin{proposition}
	\label{proposition:4.5++}
	Under the initial conditions \eqref{eq:2.11}--\eqref{eq:2.13}, $\phi$ satisfies
	\begin{align*}
		\phi(t)< \mathfrak{H}(t):=\begin{cases}
			\displaystyle	\frac{\sqrt{3}}{6\beta^2}(1-e^{-6\beta^3t}) <0, \quad &\text{if }0< \beta \leq \sqrt{\frac{5}{27}} , \\
			\displaystyle	\frac{25\sqrt{3}}{24}(1-e^{-\frac{48}{125}t})<0 , \quad &\text{if }\sqrt{\frac{5}{27}}< \beta < \frac{\sqrt{3}}{3},
		\end{cases}
	\end{align*}
   where $t \in (\mathcal{T}_-, 0)$.
\end{proposition}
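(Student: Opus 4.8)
The plan is to mirror the strategy of Proposition \ref{proposition:3.13++++}, but now producing an \emph{upper} bound for $\phi$, which on the backward interval $(\mathcal{T}_-,0)$ corresponds to a \emph{lower} bound for $\dot\phi$ fed into the comparison theorem. First I would record the sign information already available on this interval: $H(t)>0$ (Propositions \ref{proposition:3.8++} and \ref{t:Hlrbd2}) and $\phi(t)<0$ (Lemma \ref{lem-sign-phi}), together with the branch formula \eqref{eq:3.4} for $\dot\phi$ guaranteed by Lemma \ref{lemma:3.1}. Since $-6\phi H^3>0$ and $\sqrt{(6\phi H^3)^2+3H^2}\ge\sqrt3\,H$, the formula \eqref{eq:3.4} immediately yields the lower bound
\begin{equation*}
\dot\phi > -6\phi H^3 + \sqrt3\,H, \qquad t\in(\mathcal{T}_-,0).
\end{equation*}

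Next I would convert the $H$-dependence on the right into constant coefficients using the improved lower bound of Proposition \ref{t:Hlrbd2}, which is exactly why the statement splits into the same two regimes. For $0<\beta\le\sqrt{5/27}$ one has $H(t)>\mathfrak{L}(t)>\beta$; combined with $-6\phi>0$ this gives $-6\phi H^3>-6\beta^3\phi$ and $\sqrt3\,H>\sqrt3\,\beta$, hence $\dot\phi>\sqrt3\,\beta-6\beta^3\phi$. For $\sqrt{5/27}<\beta<\sqrt3/3$ one instead uses $H(t)>\mathfrak{L}(t)>\tfrac25$, so that $-6\phi H^3>-\tfrac{48}{125}\phi$ and $\sqrt3\,H>\tfrac{2\sqrt3}{5}$, giving $\dot\phi>\tfrac{2\sqrt3}{5}-\tfrac{48}{125}\phi$. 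In each case the right-hand side is an affine function of $\phi$ with constant coefficients.

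I would then introduce the comparison function $\overline\phi_-$ solving the corresponding linear ODE with $\overline\phi_-(0)=0$: namely $\dot{\overline\phi}_-=\sqrt3\,\beta-6\beta^3\overline\phi_-$ in the first regime and $\dot{\overline\phi}_-=\tfrac{2\sqrt3}{5}-\tfrac{48}{125}\overline\phi_-$ in the second. Solving by the integrating-factor method produces precisely $\mathfrak{H}(t)$ in each case (one checks $\dot{\mathfrak H}=\sqrt3\beta-6\beta^3\mathfrak H$ and $\dot{\mathfrak H}=\tfrac{2\sqrt3}5-\tfrac{48}{125}\mathfrak H$ with $\mathfrak H(0)=0$). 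Applying the comparison theorem (Theorem \ref{theorem:C}) on the backward interval---where a strict lower bound on $\dot\phi$ together with matched data at $t=0$ forces $\phi<\overline\phi_-$ for $t<0$---yields $\phi(t)<\mathfrak{H}(t)$. Finally $\mathfrak H(t)<0$ for $t<0$ is immediate since $e^{-6\beta^3 t}>1$ (resp. $e^{-48t/125}>1$) when $t<0$.

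The main thing to get right is the orientation of the comparison: because we integrate backward from $t=0$, the usual forward-time conclusion reverses, so a lower bound on the derivative yields an \emph{upper} bound on $\phi$. The only other point requiring care is confirming that the clean constant lower bounds ($\beta$ and $\tfrac25$) from Proposition \ref{t:Hlrbd2}---rather than the full $\mathfrak{L}(t)$---are exactly what linearize the inequality into the explicitly solvable ODE whose solution is $\mathfrak H$; this is what dictates the two-case structure of the statement. No genuinely hard estimate is involved beyond these bookkeeping checks.
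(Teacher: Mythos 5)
Your proposal is correct and follows essentially the same route as the paper: the strict lower bound $\dot\phi>\sqrt3\,H-6H^3\phi$ from \eqref{eq:3.4} and \eqref{eq:B3}, linearization via the constant bounds $H>\beta$ (resp. $H>\tfrac25$) from Proposition \ref{t:Hlrbd2}, and the comparison theorem (Theorem \ref{theorem:C}) applied backward in time to the affine ODE whose solution is $\mathfrak H$. Your explicit remarks on the reversal of the comparison for $t<0$ and on $\mathfrak H(t)<0$ are details the paper leaves implicit, but the argument is the same.
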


\begin{proof}
From \eqref{eq:3.4}, \eqref{eq:B3}, and the facts that $H > 0$ and $\phi < 0$ on   $(\mathcal{T}_-,0)$, we obtain
	\begin{equation}
		\dot{\phi}>\sqrt{3}H-6H^3\phi \overset{\text{Prop. \ref{t:Hlrbd2}}}{>}\begin{cases}
			\displaystyle	\sqrt{3}\beta-6\beta^3\phi, \quad &\text{if }0< \beta \leq \sqrt{\frac{5}{27}} , \\
			\displaystyle	\frac{2\sqrt{3}}{5}-\frac{48}{125}\phi, \quad &\text{if }\sqrt{\frac{5}{27}}< \beta < \frac{\sqrt{3}}{3},
		\end{cases} \quad \text{for all } t\in (\mathcal{T}_-,0).
        \label{eq4.5}
	\end{equation} 
	Define $\bar \phi_-(t)$ as the solution to the ODE
	\begin{equation*}
		\dot{\bar \phi}_- =\begin{cases}
			\displaystyle	\sqrt{3}\beta-6\beta^3{\bar \phi}_ , \quad &\text{if }0< \beta \leq \sqrt{\frac{5}{27}} , \\
			\displaystyle	\frac{2\sqrt{3}}{5}-\frac{48}{125}\bar \phi_- , \quad &\text{if }\sqrt{\frac{5}{27}}< \beta < \frac{\sqrt{3}}{3}  ,
		\end{cases} \quad \text{with} \quad  	\bar \phi_-(0)=0.
	\end{equation*}  
This equation can be solved explicitly, yielding $\bar \phi_-(t)= \mathfrak{H}(t)$. 
Theorem \ref{theorem:C} leads to $\phi(t)<\bar \phi_-(t)$ on $(\mathcal{T}_-,0)$.
\end{proof}

\section{Proof of the Main Theorem}\label{sec:4}
Now we are in a position to prove the main theorems, Theorems \ref{theorem:2.1} and \ref{theorem:1.2}.

\begin{proof}[Proof of Theorem \ref{theorem:2.1} and \ref{theorem:1.2}] 
We have established the local existence of solutions to the system \eqref{eq:2.4++++++}--\eqref{eq:2.3++++++} (see Proposition \ref{t:locext}), along with  bounds on  $H$ and $\phi$ over the maximal interval of existence $(\mathcal{T}_-,\mathcal{T}_+)$ (see Propositions \ref{proposition:3.4},  \ref{proposition:3.5}, \ref{proposition:3.10}, \ref{t:Hlrbd2}, \ref{proposition:4.2}, \ref{proposition:3.13++++}, \ref{proposition:3.14++++} and \ref{proposition:4.5++}).  
Recalling the estimates for $\phi$ and $H$ from these propositions, we obtain the following bounds
\begin{gather}
	0< \frac{1}{5t+\frac{1}{\beta}} < H(t) < \frac{1}{t+\frac{1}{\beta}} < \beta,   \label{eq:5.1}
	\\
	0<	\frac{-1+\sqrt{1+\frac{24\beta^2}{5}\ln{(1+5\beta t)}}}{4\sqrt{3}\beta^2} < \phi(t) < \sqrt{3} \ln(\beta t+1)<\sqrt{3} \ln(\beta \mathcal{T}_+ +1),      \label{eq:5.2}
\end{gather}
for all $t\in (0,\mathcal{T}_+)$, and 
\begin{gather}
	0<	\mathfrak{L}(t)< H(t) <\frac{2 \beta +\sqrt{2}-\left(\sqrt{2}-2 \beta \right) e^{3 \sqrt{2} t}}{\sqrt{2} \left(2 \beta  +\sqrt{2}+\left(\sqrt{2}-2 \beta \right) e^{3 \sqrt{2} t}\right)}<\frac{1}{\sqrt{2}},   \label{eq:5.3++} \\
	\frac{\sqrt{3}}{12}(1-e^{-12 \mathcal{T}_-})  <	\frac{\sqrt{3}}{12}(1-e^{-12t})  < \phi(t) <  \mathfrak{H}(t)<0,  
	\label{eq:5.4++}
\end{gather}
for all $t\in (\mathcal{T}_-,0)$, 
where $\mathfrak{L}$ and $\mathfrak{H}$ are defined in Proposition  \ref{t:Hlrbd2} and \ref{proposition:4.5++}, respectively.

We now \textit{aim} to show that this interval is global-in-time, that is,   $\mathcal{T}_- = -\infty$ and $\mathcal{T}_+ = +\infty$.   This will complete the proofs of Theorems~\ref{theorem:2.1} and \ref{theorem:1.2}. To this end, we focus on Theorem~\ref{theorem:2.1}; the proof of Theorem~\ref{theorem:1.2} follows similarly with minor modifications, and we omit the details.

Before establishing the global-in-time result, we first derive estimates for $\dot{\phi}$ and the scale factor $a$:

\underline{Estimates of $\dot{\phi}$:}  	
From \eqref{eq3.13} in the proof of Proposition~\ref{proposition:3.13++++} and  \eqref{eq4.5} in the proof of Proposition~\ref{proposition:4.5++}, we have 
\begin{equation*}
	\sqrt{3}H-6H^3\phi<\dot{\phi}<\sqrt{3}H-12H^3\phi,
\end{equation*}
 for all  $ t \in (\mathcal{T}_-,0)$. 
In this interval, Proposition \ref{proposition:3.10} provides the constant upper bound for $H$, Proposition \ref{t:Hlrbd2} implies the improved lower bound for $H$. Bounds for $\phi$ are given by Propositions \ref{proposition:3.13++++} and \ref{proposition:4.5++}. Substituting all the four bounds into the inequality yields
\begin{equation}
0<	\mathfrak{W}(t)<\dot{\phi}<\sqrt{3}e^{-12t} < \sqrt{3}e^{-12 \mathcal{T}_- } ,\quad \text{for all } t\in (\mathcal{T}_-,0), \label{phi-}
\end{equation}
where $\mathfrak{W}(t)$ is defined by
\begin{equation*}
	\mathfrak{W}(t):=\begin{cases}
		\displaystyle	\sqrt{3}\beta e^{-6\beta^3t} , \quad &\text{if }0< \beta \leq \sqrt{\frac{5}{27}} , \\
		\displaystyle	\frac{2\sqrt{3}}{5}e^{-\frac{48}{125}t} , \quad &\text{if } \sqrt{\frac{5}{27}}< \beta < \frac{\sqrt{3}}{3}  .
	\end{cases} 
\end{equation*}

On the other hand, for all  $ t \in (0,\mathcal{T}_+)$, by \eqref{eq4.2} in the proof of Proposition \ref{proposition:4.2} and \eqref{eq3.14} in the proof of Proposition \ref{proposition:3.14++++}, we have
\begin{equation*}
	\frac{\sqrt{3}H}{1+4\sqrt{3}\phi H^2}<\dot{\phi}<\sqrt{3}H,
\end{equation*}
for all $ t \in (0,\mathcal{T}_+)$.  In this interval, Propositions~\ref{proposition:3.4} and \ref{proposition:3.5} provide lower and upper bounds for $H$, while Propositions~\ref{proposition:4.2} and \ref{proposition:3.14++++} provide bounds for $\phi$. Substituting these bounds into the inequality yields
\begin{equation}
0<	\frac{1}{\sqrt{1+12\beta^2\ln{(\beta t+1)}}}\frac{\sqrt{3}}{5t+\frac{1}{\beta}}<\dot{\phi}<\frac{\sqrt{3}}{t+\frac{1}{\beta}} < \sqrt{3} \beta ,\quad \text{for all } t\in (0,\mathcal{T}_+)  .  
	\label{phi+}
\end{equation}

\underline{Estimates of $a$:} We now derive estimates for the scale factor  $a$, which is given by $a(t):=a_0 e^{\int^t_0H(s)ds}$. Directly integrating \eqref{eq:5.1} and \eqref{eq:5.3++} yields
 \begin{equation}
	0<	a_0 \left( \frac{2}{(\sqrt{2}\beta + 1) + (1 - \sqrt{2}\beta) e^{3\sqrt{2} t}} \right)^{\frac{1}{3}} e^ { \frac{ \sqrt{2} t}{2 } }<a(t)<a_0 \mathfrak{G}(t) <a_0, \quad \text{ for all } t \in (\mathcal{T}_-,0),
		\label{eq:a-}
	\end{equation}
	where $\mathfrak{G}(t)$ is defined by
	\begin{equation*}
	\mathfrak{G}(t)=  \begin{cases} 
		\left( \dfrac{227}{225 e^ { -\frac{454 \beta t}{45}}  + 2} \right)^{\frac{1}{10}} < 1 , \quad &\text{if }0< \beta \leq \sqrt{\frac{5}{27}},\\ 
		\biggl( \dfrac{2}{5\beta e^{-4t}-(5\beta-2)}\biggr)^{\frac{1}{10}} <1
		, \quad &\text{if } \sqrt{\frac{5}{27}}< \beta < \frac{\sqrt{3}}{3}.
	\end{cases}
\end{equation*}
For $t \in (0, \mathcal{T}_+)$,  we similarly obtain the bounds
	\begin{equation}
		a_0<a_0 \left( 5\beta t + 1 \right)^{\frac{1}{5}}<a(t)< a_0 (\beta t + 1) < a_0 (\beta \mathcal{T}_+ + 1)  \quad \text{ for all } t \in (0, \mathcal{T}_+).
		\label{eq:a+}
	\end{equation}

\underline{Extensions of solutions:} Let us prove $\mathcal{T}_- = -\infty$ and $\mathcal{T}_+ = +\infty$ by contradiction. 	
We first assume, for contradiction, that $\mathcal{T}_+ < +\infty$, and focus on the solution on $(0,\mathcal{T}_+)$. 
From the bounds established in \eqref{eq:5.1}--\eqref{eq:a+}, we conclude that there exists a constant
\begin{equation*}
R>\max\left\{  \sqrt{3} \beta,   a_0 (\beta \mathcal{T}_+ + 1), \beta,  \sqrt{3} \ln(\beta \mathcal{T}_+ +1)  \right\}>0 , 
\end{equation*}
such that $\mathcal{U}:=\p{a,H,\phi,\Phi}^T\in B_R(0) \subset \Rbb^4$ for all $t\in(0,\mathcal{T}_+)$, where $B_R(0)$ denotes the open ball centered at the origin with radius  $R$.

Recall that the system \eqref{e:locsys} can be written as an ODE:
\begin{equation*}
	\frac{d}{dt} \mathcal{U}= \mathcal{F}(\mathcal{U})
\end{equation*}
where  
\begin{equation*}
 \mathcal{F}(\mathcal{U}):= \p{ a H\\	F_1(H,\phi,\Phi) \\
 	\Phi \\ F_2(H,\phi,\Phi)  }
\end{equation*}
and $F_1$ and $F_2$ are defined in \eqref{e:F1} and \eqref{e:F2}, respectively. Since  $\mathcal{F}\in C^1(\overline{B_R(0)}, \Rbb^4)$, it is Lipschitz continuous and bounded on $\overline{B_R(0)}$. 
Considering Corollary~\ref{t:contthm2}, the solution $\mathcal{U}$ can therefore be  continued to the right passing through the point $\mathcal{T}_+$. This contradicts the assumption that $\mathcal{T}_+$ is the maximal time of existence.  Hence, we conclude that $\mathcal{T}_+ = +\infty$. A similar argument applies to the past. Focusing on the interval $(\mathcal{T}_-,0)$, and choosing
 \begin{equation*}
 	R>\max\left\{ \sqrt{3}e^{-12 \mathcal{T}_- },   a_0,    \frac{1}{\sqrt{2}}, 	\frac{\sqrt{3}}{12}(e^{-12 \mathcal{T}_-}-1)  \right\}>0,
 \end{equation*}
and applying the same continuation argument to conclude that $\mathcal{T}_-=-\infty$. We complete the proof.   
\end{proof}

	\appendix

	\section{Basic ode theorems}
	\label{app:A}
    In this appendix, we state the basic existence and comparison theorems that will be used throughout this article. We omit the proofs, which can be found in standard ODE references such as \cite{Hsu2013,you1982}. 
	\begin{theorem}[Picard-Lindelöf Theorem]
		\label{theorem:PL}
		Let $ D \subseteq \mathbb{R} \times \mathbb{R}^n $ be a closed domain containing the point $ (t_0, y_0) $, and let $ f: D \to \mathbb{R}^n $ be a continuous function satisfying  the Lipschitz conditions in $y$ with Lipschitz constant $L$. Then, the initial value problem
        \begin{equation}\label{e:ode1}
			\frac{dy}{dt} = f(t, y), \quad y(t_0) = y_0
		\end{equation}
		has a unique $C^1$ solution $ y(t) $ defined on some interval $ [t_0 - h, t_0 + h] $ for some $ h > 0 $.
	\end{theorem}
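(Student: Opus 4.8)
The plan is to recast the initial value problem \eqref{e:ode1} as an equivalent integral equation and then produce its solution as the unique fixed point of a contraction mapping, via the Banach fixed point theorem. First I would record the standard equivalence: a function $y \in C^1$ solves \eqref{e:ode1} on an interval containing $t_0$ if and only if the continuous function $y$ satisfies the Volterra integral equation
\[
	y(t) = y_0 + \int_{t_0}^t f(s, y(s))\, ds.
\]
The forward implication is the fundamental theorem of calculus; the reverse implication holds because the right-hand side is differentiable in $t$ (its integrand being continuous), which simultaneously recovers both the $C^1$ regularity and the initial condition $y(t_0) = y_0$. This reduction is what lets us work in the Banach space of continuous functions rather than directly with the differential equation.

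Next I would localize. Since $D$ is closed and contains $(t_0, y_0)$, I would choose constants $a, b > 0$ so that the box $\mathcal{R} := [t_0 - a, t_0 + a] \times \overline{B_b(y_0)}$ lies in $D$; by continuity, $f$ is bounded on the compact set $\mathcal{R}$, say $\norm{f} \leq M$. Setting $h := \min\{a, b/M\}$ and letting $X := C([t_0 - h, t_0 + h], \overline{B_b(y_0)})$ with the supremum norm, I obtain a complete metric space. I then define the Picard operator $(\Phi y)(t) := y_0 + \int_{t_0}^t f(s, y(s))\, ds$. The choice $h \leq b/M$ guarantees
\[
	\norm{(\Phi y)(t) - y_0} \leq M\,|t - t_0| \leq Mh \leq b,
\]
so that $\Phi$ maps $X$ into itself — this invariance is the reason for calibrating $h$ against both $a$ and $b/M$.

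The heart of the argument is to show that some iterate of $\Phi$ is a contraction. Using the Lipschitz hypothesis with constant $L$, for $y, z \in X$ one has $\norm{(\Phi y)(t) - (\Phi z)(t)} \leq L \int_{t_0}^t \norm{y(s) - z(s)}\, ds$, and iterating this bound $n$ times yields
\[
	\norm{\Phi^n y - \Phi^n z}_\infty \leq \frac{(Lh)^n}{n!}\, \norm{y - z}_\infty .
\]
Since $(Lh)^n/n! \to 0$, some power $\Phi^n$ is a genuine contraction, so the Banach fixed point theorem furnishes a unique fixed point $y \in X$; by the equivalence of the first step, this $y$ is the unique $C^1$ solution of \eqref{e:ode1} on $[t_0 - h, t_0 + h]$. (Alternatively, one may shrink $h$ further so that $Lh < 1$ and make $\Phi$ itself a contraction, at the cost of a smaller existence interval.)

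The point demanding the most care — what I expect to be the main obstacle — is the simultaneous calibration of $h$: it must be small enough that the iterates never leave $\overline{B_b(y_0)}$ (controlled by $h \leq b/M$) while also yielding the contraction. The factorial gain $1/n!$ in the iterated estimate is precisely the device that decouples these two requirements, letting $h = \min\{a, b/M\}$ suffice with no further dependence on $L$; this is what makes the clean local statement hold and is the step I would be most careful to get right.
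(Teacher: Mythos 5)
The paper does not actually prove this theorem: it appears in Appendix~\ref{app:A} as a standard result whose proof is explicitly omitted and deferred to textbook references. There is therefore no internal proof to compare against; what matters is whether your argument is sound, and it is --- it is precisely the classical Picard iteration proof those references contain: reduction of \eqref{e:ode1} to the Volterra integral equation, localization to a compact box, invariance of the Picard operator under the calibration $h \le b/M$, and the factorial estimate $(Lh)^n/n!$ making some iterate $\Phi^n$ a contraction, so that the Banach fixed point theorem applies without shrinking $h$ to depend on $L$.

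Two points deserve tightening. First, your box $\mathcal{R} \subset D$ exists only if $(t_0, y_0)$ is an \emph{interior} point of $D$; the paper's statement only assumes $D$ is a closed domain containing the point, so for a boundary point the localization step has nothing to work with. This is really a looseness in the statement rather than in your proof (in the paper's actual application, Proposition~\ref{t:locext}, the right-hand side is $C^1$ on all of $\mathbb{R}^3$, so interiority is automatic), but it is worth flagging the hypothesis you are silently using. Second, the fixed point theorem yields uniqueness only within $X = C([t_0-h, t_0+h], \overline{B_b(y_0)})$, whereas the theorem asserts uniqueness among all $C^1$ solutions; you must still rule out a second solution that exits the ball $\overline{B_b(y_0)}$ before time $h$. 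The standard patch is a first-exit-time argument: while a solution remains in $\mathcal{R}$ its derivative is bounded by $M$, so it cannot leave the ball before $|t - t_0| = b/M \ge h$, hence every solution on $[t_0-h, t_0+h]$ lies in $X$ and coincides with your fixed point. Alternatively, Gr\"onwall's inequality gives uniqueness outright. With these two remarks incorporated, the proof is complete and is the standard one.
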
 
	\begin{theorem}[Comparison Theorem]
		\label{theorem:C}
Let $ f(t,x) $ and $ F(t,x) $ be two continuous scalar functions defined on a planar region $ D $,  such that
\begin{equation*}
f(t,x) < F(t,x), \quad (t,x) \in D. 
\end{equation*}
If $ x = \varphi(t) $ and $ x = \Phi(t) $ are solutions of the differential equations
\begin{equation*}
x' = f(t,x) \quad \text{and} \quad x' = F(t,x), 
\end{equation*}
respectively, passing through the same point $ (\tau, \xi) \in D $, then the following conclusion holds:
\begin{enumerate}[leftmargin=*]
    \item[(1)] $ \varphi(t) < \Phi(t) $ for $ t > \tau $ and $ t $ in the common interval of existence;
    \item[(2)] $ \varphi(t) > \Phi(t) $ for $ t < \tau $ and $ t $ in the common interval of existence.
\end{enumerate}
	\end{theorem}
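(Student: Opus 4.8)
The plan is to reduce both conclusions to a single sign analysis of the difference $w(t) := \Phi(t) - \varphi(t)$, exploiting the \emph{strict} inequality $f < F$ together with a first-touching-point argument. First I would record the defining relations $\varphi'(t) = f(t,\varphi(t))$ and $\Phi'(t) = F(t,\Phi(t))$, the shared initial condition $\varphi(\tau) = \Phi(\tau) = \xi$, and hence $w(\tau) = 0$. Differentiating at the base point gives $w'(\tau) = F(\tau,\xi) - f(\tau,\xi) > 0$, so $w$ is strictly increasing through $t = \tau$; in particular $w(t) > 0$ for $t$ slightly larger than $\tau$ and $w(t) < 0$ for $t$ slightly smaller than $\tau$. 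This already yields both conclusions locally near $\tau$.

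To propagate the sign of $w$ across the whole common interval of existence, I would argue by contradiction for part (1). Suppose $w$ fails to remain positive for $t > \tau$, and set
\[
t^* := \inf\{\, t > \tau : w(t) \le 0 \,\}.
\]
The local analysis gives $t^* > \tau$, and by continuity $w(t^*) = 0$ while $w(t) > 0$ on $(\tau, t^*)$. At this first touching point the two solution graphs meet, so $\varphi(t^*) = \Phi(t^*) =: \eta$ with $(t^*,\eta) \in D$, whence
\[
w'(t^*) = F(t^*,\eta) - f(t^*,\eta) > 0.
\]
On the other hand, since $w > 0$ on $(\tau,t^*)$ and $w(t^*) = 0$, the left-hand derivative at $t^*$ must satisfy $w'(t^*) \le 0$, a contradiction. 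Hence $w(t) > 0$ for all $t > \tau$ in the common interval, i.e.\ $\varphi(t) < \Phi(t)$. Part (2) follows from the mirror-image argument applied to $t_* := \sup\{\, t < \tau : w(t) \ge 0 \,\}$, where the strict inequality $f < F$ again produces a sign contradiction in the one-sided derivative of $w$ at $t_*$.

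The main obstacle I anticipate is not the algebra but the regularity bookkeeping: since $f$ and $F$ are assumed only continuous (not Lipschitz), solutions need not be unique, so the argument must be phrased purely in terms of the two \emph{given} solutions $\varphi$ and $\Phi$, never invoking any uniqueness statement. The delicate points are therefore (i) justifying that the touching point $(t^*,\eta)$ genuinely lies in the region $D$ where $f < F$ holds, which follows because both solutions remain in $D$ throughout their common interval of existence, and (ii) making the one-sided derivative comparison rigorous: if a differentiable function is positive on $(\tau,t^*)$ and vanishes at $t^*$, then $w'(t^*) = \lim_{h\to 0^+} \big(w(t^*) - w(t^*-h)\big)/h \le 0$. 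Once these two points are secured, the strict inequality $f < F$ does all the work, and no quantitative estimates are needed.
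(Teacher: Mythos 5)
Your proof is correct. Note that the paper itself does not prove Theorem \ref{theorem:C}: it explicitly omits the proof and refers to standard ODE references, and your first-crossing argument (difference function $w = \Phi - \varphi$, strict sign of $w'$ at $\tau$ and at any touching point, contradiction with the one-sided derivative) is precisely the standard proof found in such references, with the two delicate points --- avoiding any appeal to uniqueness and justifying the one-sided derivative inequality at $t^*$ --- handled correctly.
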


    \begin{theorem}[Continuation of solutions]\label{t:contthm1}
		Let $f\in C(D)$ and satisfy $|f(t,y)|\leq M$ for some constant $M>0$ and $(t,y) \in D$. Suppose $\phi$ is a solution of \eqref{e:ode1} on the interval $J=(a,b)$. Then
		
		$(1)$ $\lim_{t\rightarrow a+} \phi(t)=\phi(a)$ and $\lim_{t\rightarrow b-} \phi(t)=\phi(b)$ both exist and finite.
		
		$(2)$ if $(b,\phi(b)) \in D$, then the solution $\phi$ can be continued to the right passing through the point $t=b$.
	\end{theorem}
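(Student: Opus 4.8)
The plan is to exploit the boundedness hypothesis $|f(t,y)| \le M$ to upgrade the solution $\phi$ from merely continuous to uniformly Lipschitz, from which the existence of the one-sided limits in part~(1) follows by the Cauchy criterion; part~(2) then follows by gluing a locally-existing solution issued from the endpoint $(b,\phi(b))$ onto $\phi$.

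For part~(1), I would work with the integral form of the equation. Since $\phi$ solves \eqref{e:ode1} on $(a,b)$, for any $s,t \in (a,b)$ one has $\phi(t) - \phi(s) = \int_s^t f(\tau,\phi(\tau))\,d\tau$, so the bound gives $|\phi(t)-\phi(s)| \le M|t-s|$; thus $\phi$ is Lipschitz with constant $M$ on $(a,b)$. Given a sequence $t_n \to b-$, the estimate $|\phi(t_n)-\phi(t_m)| \le M|t_n-t_m|$ shows $(\phi(t_n))$ is Cauchy in $\mathbb{R}^n$, hence convergent, and the limit is independent of the chosen sequence by the same estimate. This defines a finite $\phi(b) := \lim_{t\to b-}\phi(t)$, and symmetrically $\phi(a) := \lim_{t\to a+}\phi(t)$. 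Next I would promote this continuous extension to a genuine solution on $(a,b]$: letting $t \to b-$ in $\phi(t) = \phi(t_0) + \int_{t_0}^t f(\tau,\phi(\tau))\,d\tau$, using continuity of $\phi$ on $[t_0,b]$ and boundedness of the integrand, yields $\phi(b) = \phi(t_0) + \int_{t_0}^b f(\tau,\phi(\tau))\,d\tau$; since the right-hand side is differentiable at $t=b$ with derivative $f(b,\phi(b))$, the extended $\phi$ is $C^1$ up to $b$ with $\phi'(b^-)=f(b,\phi(b))$.

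For part~(2), with $(b,\phi(b)) \in D$ I would invoke a local existence theorem for the initial value problem $y'=f(t,y)$, $y(b)=\phi(b)$: under the present continuity hypothesis this is Peano's theorem, while in the paper's intended application $f$ is additionally locally Lipschitz, so Theorem~\ref{theorem:PL} applies and furnishes uniqueness as well. Either way one obtains a solution $\psi$ on some interval $[b,b+\delta]$ with $\psi(b)=\phi(b)$ and $\psi'(b^+)=f(b,\phi(b))$. Concatenating, $\tilde\phi := \phi$ on $(a,b]$ and $\tilde\phi := \psi$ on $[b,b+\delta]$ is well defined and continuous, and at the seam $t=b$ the left and right derivatives coincide (both equal $f(b,\phi(b))$), so $\tilde\phi \in C^1((a,b+\delta])$ solves \eqref{e:ode1} and is the desired continuation to the right past $t=b$.

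The routine estimates (the Lipschitz bound and the Cauchy argument) are straightforward; the step demanding the most care is showing that the endpoint value $\phi(b)$ obtained merely as a limit actually serves as a legitimate solution value — that is, that passing to the limit in the integral equation produces a $C^1$ extension whose one-sided derivative equals $f(b,\phi(b))$ — and then verifying that the glued function $\tilde\phi$ is genuinely $C^1$ at $t=b$ rather than only continuous. This matching of one-sided derivatives at the seam is the crux that makes the concatenation a bona fide solution, and is where I would focus the rigor.
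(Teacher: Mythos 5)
The paper gives no proof of this appendix theorem---it is quoted from standard ODE references---and your argument is exactly the standard one found there: the bound $|f|\le M$ makes $\phi$ Lipschitz with constant $M$, the Cauchy criterion yields the finite endpoint limits, and a local (Peano or Picard--Lindel\"of) solution issued from $(b,\phi(b))$ glues onto $\phi$ with matching one-sided derivatives, so the concatenation is a genuine $C^1$ solution past $t=b$. One cosmetic remark: the promotion of the limit value $\phi(b)$ to a $C^1$ endpoint value invokes $f(b,\phi(b))$, which is only defined once $(b,\phi(b))\in D$, so that step logically belongs under the hypothesis of part (2) rather than inside part (1); this does not affect correctness, since part (1) only claims existence of the limits, which your Lipschitz--Cauchy argument already establishes.
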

	
	\begin{corollary}[Continuation principle]\label{t:contthm2}
		Let $f\in C(D)$. Suppose $\phi$ is a solution of \eqref{e:ode1} on the interval $J=(a,b)$, and if there is a finite constant $M>0$, such that for every $t\in (a,b)$,
		\begin{equation*}
			|f(t,\phi(t))|\leq M<+\infty,
		\end{equation*}
		then the solution $\phi$ can be continued to the right passing through the point $t=b$.
	\end{corollary}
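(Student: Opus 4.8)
The plan is to reduce the corollary to Theorem \ref{t:contthm1} by first upgrading the trajectory-wise bound $|f(t,\phi(t))|\le M$ into a finite limit for $\phi$ at the endpoint, and then localizing to a compact tube around the graph of $\phi$ where continuity of $f$ supplies an honest neighborhood bound. The essential content is that the bound on $f$ along the orbit already forces $\phi$ to approach a finite limit as $t\to b^-$, which places the terminal point safely inside the domain and makes local existence applicable.

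First I would note that, along the solution, $\phi'(t)=f(t,\phi(t))$, so the hypothesis gives $|\phi'(t)|\le M$ for all $t\in(a,b)$. Integrating, for any $a<t_1<t_2<b$ I obtain $|\phi(t_2)-\phi(t_1)|=\bigl|\int_{t_1}^{t_2} f(s,\phi(s))\,ds\bigr|\le M|t_2-t_1|$. Assuming $b<\infty$ (otherwise there is nothing to continue), the Cauchy criterion guarantees that $\lim_{t\to b^-}\phi(t)$ exists and is finite; denote it $\phi(b)$. This reproves part (1) of Theorem \ref{t:contthm1} using only the weaker orbit bound rather than a global bound on $f$. With $\phi(b)$ defined, the extended map is continuous on $(a,b]$, and since $\phi'(t)=f(t,\phi(t))\to f(b,\phi(b))$ by continuity of $f$, the extension lies in $C^1((a,b])$ and solves \eqref{e:ode1} up to and including $b$.

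Next I would continue the solution to the right. Because $\phi'$ is uniformly bounded and $b$ is finite, the graph $\{(t,\phi(t)):t\in[t_*,b]\}$ for some $t_*$ close to $b$ is a compact subset of $D$ containing the terminal point $(b,\phi(b))$; choosing a compact neighborhood $K\subset D$ of this point, continuity of $f$ yields a genuine bound $|f|\le M'$ on $K$. Invoking the local existence theorem (Theorem \ref{theorem:PL}, whose Lipschitz hypothesis is satisfied in every application of this corollary in the present paper, where $f=\mathcal{F}$ is $C^1$) at the initial point $(b,\phi(b))$ produces a solution $\psi$ on $[b,b+h]$ with $\psi(b)=\phi(b)$. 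Gluing $\psi$ to $\phi$ at $t=b$ gives a solution on $(a,b+h)$ agreeing with the original on $(a,b)$, which is the asserted continuation past $b$.

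The main obstacle is the gap between the hypothesis actually provided — boundedness of $f$ only along the trajectory — and the hypothesis required by Theorem \ref{t:contthm1}, namely boundedness of $f$ throughout $D$. Bridging this requires the observation that the orbit bound alone already controls $\phi'$, hence pins down a finite limit at $b$; once the terminal point is known to lie in $D$, one localizes to a compact tube where continuity upgrades the trajectory bound to a neighborhood bound, after which the standard continuation and local-existence machinery applies verbatim. A secondary subtlety is that the corollary suppresses the requirement $(b,\phi(b))\in D$; in the applications within this paper the domain is a closed ball $\overline{B_R(0)}$ (or all of $\mathbb{R}^{n+1}$), so the terminal point is automatically admissible and the localization step is immediate.
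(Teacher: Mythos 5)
The paper offers no proof of this corollary to compare against: the appendix explicitly omits proofs of these ODE facts and refers to standard references, so your argument must be judged on its own merits, and on those merits it is correct and is the standard textbook reduction. You correctly isolate the only real content: the hypothesis bounds $f$ merely along the orbit, whereas Theorem \ref{t:contthm1} demands $|f|\leq M$ on all of $D$, and you bridge this by noting that the endpoint-limit argument only ever uses $\phi'(t)=f(t,\phi(t))$, so the orbit bound makes $\phi$ $M$-Lipschitz on $(a,b)$ and the Cauchy criterion yields a finite limit $\phi(b)$ (for finite $b$); localizing to a compact tube around the graph then upgrades the trajectory bound to a genuine neighborhood bound, after which one may either apply Theorem \ref{t:contthm1}(2) with $D$ replaced by the tube or, as you do, run local existence at $(b,\phi(b))$ and glue. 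Your $C^1$-matching at $t=b$ is also sound: continuity of the extension on $(a,b]$ together with $\phi'(t)\to f(b,\phi(b))$ forces one-sided differentiability with the correct value, so the glued function genuinely solves \eqref{e:ode1} across $b$. Two caveats remain, both of which you flag and handle appropriately: the corollary tacitly assumes $(b,\phi(b))\in D$ (inherited from Theorem \ref{t:contthm1}(2), and automatic in the paper's applications, where $D=\overline{B_R(0)}$ with the trajectory confined strictly inside, or all of $\mathbb{R}^4$); and Theorem \ref{theorem:PL} as stated requires a Lipschitz condition that bare $f\in C(D)$ does not supply, so under the corollary's literal hypotheses the rightward step should invoke Peano existence (continuation needs only existence, not uniqueness) — though since every application in this paper has $\mathcal{F}\in C^1$ near the trajectory, your appeal to Picard--Lindel\"of is harmless there. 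In short: no gap, and your proof makes explicit precisely the two hypotheses the paper's statement suppresses.
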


	\section{Useful inequalities}
	\label{app:B}
	\begin{lemma}
		For all positive $x>0$,  the following inequalities hold
        	\begin{equation}
			\sqrt{1+x^2}<1+x,
			\label{eq:B2}
		\end{equation}
		\begin{equation}
			\sqrt{1+x^2}-x>\frac{1}{1+2x},
			\label{eq:B1}
		\end{equation}
		\begin{equation}
			\sqrt{1+x^2}>1.
			\label{eq:B3}
		\end{equation}
	\end{lemma}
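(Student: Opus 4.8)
The three claims are elementary, and the plan is to dispatch them in an order that lets the later inequality reuse the earlier one. I would begin with \eqref{eq:B2}, since it is the cleanest and feeds into \eqref{eq:B1}. Both sides of $\sqrt{1+x^2}<1+x$ are strictly positive for $x>0$, so squaring is an equivalence: the claim reduces to $1+x^2<(1+x)^2=1+2x+x^2$, i.e.\ to $0<2x$, which holds for every $x>0$. Next I would handle \eqref{eq:B3}, which is immediate: for $x>0$ we have $1+x^2>1$, and since $t\mapsto\sqrt{t}$ is strictly increasing, $\sqrt{1+x^2}>\sqrt{1}=1$.

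The only step requiring a small idea is \eqref{eq:B1}, and the plan is to rationalize the numerator rather than attempt a direct estimate. Writing
\begin{equation*}
	\sqrt{1+x^2}-x=\frac{\bigl(\sqrt{1+x^2}-x\bigr)\bigl(\sqrt{1+x^2}+x\bigr)}{\sqrt{1+x^2}+x}=\frac{1}{\sqrt{1+x^2}+x},
\end{equation*}
I reduce the target inequality $\sqrt{1+x^2}-x>\tfrac{1}{1+2x}$ to $\tfrac{1}{\sqrt{1+x^2}+x}>\tfrac{1}{1+2x}$. Since both denominators $\sqrt{1+x^2}+x$ and $1+2x$ are positive for $x>0$, this is equivalent to the reversed comparison of denominators, namely $\sqrt{1+x^2}+x<1+2x$, i.e.\ $\sqrt{1+x^2}<1+x$. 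But this is exactly \eqref{eq:B2}, already established, so \eqref{eq:B1} follows.

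There is no genuine obstacle here; the mild subtlety is simply to prove \eqref{eq:B2} before \eqref{eq:B1} so that the denominator comparison can be invoked, and to note at each step that the quantities being squared or inverted are positive so that the manipulations are equivalences rather than one-directional implications. I would therefore present the proof in the order \eqref{eq:B2}, \eqref{eq:B3}, \eqref{eq:B1}, making the dependence of the last on the first explicit.
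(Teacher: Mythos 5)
Your proof is correct and follows essentially the same route as the paper: squaring for \eqref{eq:B2} and \eqref{eq:B3}, then rationalizing $\sqrt{1+x^2}-x$ and invoking \eqref{eq:B2} to compare denominators for \eqref{eq:B1}. In fact your version is slightly more careful, since the paper's displayed rationalization contains a typo (it writes $\frac{1}{\sqrt{1+x^2}+1}$ where the denominator should be $\sqrt{1+x^2}+x$, exactly as you have it).
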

	\begin{proof}
		The inequalities \eqref{eq:B2} and \eqref{eq:B3} follow immediately by squaring both sides and simplifying. To prove \eqref{eq:B1}, we note
        \begin{equation*}
        \sqrt{1+x^2}-x=\frac{1}{\sqrt{1+x^2}+1} \stackrel{\text{\eqref{eq:B2}}}{>}\frac{1}{1+2x}.
        \end{equation*}
        It concludes \eqref{eq:B1}.
	\end{proof}

 \begin{lemma}\label{t:pre1}
 	If $\beta<x<1$ and $0<\beta \leq \frac{1}{3} \sqrt{\frac{5}{3}}$, then 
 	$2 x^2+\frac{454 \beta }{135 x}-\frac{56}{15}\leq 0$. 
 \end{lemma}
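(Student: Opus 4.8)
The plan is to treat the left-hand side as a one-variable function of $x$ with $\beta$ as a fixed parameter, exploit its convexity, and reduce the inequality to checking the two endpoints of the interval $[\beta,1]$. Set
\[
g(x):=2x^2+\frac{454\beta}{135x}-\frac{56}{15},\qquad x>0.
\]
First I would differentiate twice: $g''(x)=4+\frac{908\beta}{135x^3}>0$ for all $x>0$ (using $\beta>0$), so $g$ is strictly convex on $(0,+\infty)$, and in particular on the compact interval $[\beta,1]$. A convex function on a closed interval attains its maximum at an endpoint, so $g(x)\le\max\{g(\beta),g(1)\}$ for every $x\in[\beta,1]$; it therefore suffices to show $g(\beta)\le 0$ and $g(1)\le 0$.

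The second step is the endpoint bookkeeping. A direct computation gives
\[
g(\beta)=2\beta^2+\frac{454}{135}-\frac{56}{15}=2\beta^2-\frac{10}{27},
\]
so $g(\beta)\le 0$ is equivalent to $\beta^2\le\frac{5}{27}$, i.e.\ $\beta\le\sqrt{5/27}=\tfrac13\sqrt{5/3}$, which is exactly the hypothesis (with equality permitted at the upper end). Likewise,
\[
g(1)=2+\frac{454\beta}{135}-\frac{56}{15}=\frac{454\beta-234}{135},
\]
so $g(1)\le 0$ is equivalent to $\beta\le\frac{234}{454}=\frac{117}{227}$. Since $\sqrt{5/27}\approx 0.4303<0.5154\approx\frac{117}{227}$, the hypothesis $\beta\le\sqrt{5/27}$ guarantees $g(1)<0$. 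Combining the two endpoint estimates with the convexity bound yields $g(x)\le 0$ on $[\beta,1]$, hence on $(\beta,1)$, which is the claim.

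There is essentially no analytic obstacle here: the only things to get right are the convexity sign (immediate once $\beta>0$) and the two rational comparisons $2\beta^2\le 10/27$ and $\beta\le 117/227$. The mild subtlety worth flagging is that the maximum of $g$ over $[\beta,1]$ is controlled by the endpoints \emph{precisely because} $g$ is convex; one should resist locating the interior critical point $x_\ast=(227\beta/270)^{1/3}$, since that is the \emph{minimum} of $g$ and is irrelevant to an upper bound. I would also note that the estimate is sharp at $\beta=\sqrt{5/27}$, $x=\beta$, which explains why the threshold $\sqrt{5/27}$ (equivalently $\tfrac13\sqrt{5/3}$) is exactly the one appearing in the hypothesis.
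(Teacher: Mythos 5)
Your proof is correct and follows essentially the same route as the paper's: both reduce the claim to evaluating the function at the two endpoints, with identical endpoint algebra ($g(\beta)=2\beta^2-\tfrac{10}{27}\le 0$ precisely under $\beta\le\sqrt{5/27}$, and $g(1)<0$). The only difference is in how the endpoint reduction is justified — you use strict convexity ($g''>0$), whereas the paper locates the interior critical point $x_{\mathrm{crit}}=\bigl(\tfrac{227\beta}{270}\bigr)^{1/3}$, verifies it lies in $(\beta,1)$, and argues by monotonicity; your variant is marginally cleaner since it makes that verification unnecessary.
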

 \begin{proof}
Define a function $f(x):= 2x^2 + \frac{454\beta}{135x} - \frac{56}{15}$. 
We aim to show that \(f(x) < 0\) on the interval \(x \in (\beta, 1)\), assuming \(0<\beta \leq \frac{1}{3} \sqrt{\frac{5}{3}}\). It is straightforward to verify that $f(x)$ attains a minimum at $x_\text{crit}=(\frac{227 \beta }{270})^{\frac{1}{3}}$ by analyzing $f'(x)$. Therefore, $f(x)$ is strictly decreasing on $(\beta, x_{\text{crit}})$, strictly increasing on $(x_{\text{crit}}, 1)$.  Since $0<\beta \leq \frac{1}{3} \sqrt{\frac{5}{3}}$, we can verify $x_\text{crit}\in(\beta,1)$. Therefore, the maximum value of \(f(x)\) on \((\beta, 1)\) is achieved at the endpoints:  $f(1)=\frac{454 \beta }{135}-\frac{26}{15}<\frac{454}{81 \sqrt{15}}-\frac{26}{15}<0$ and $f(\beta)=2 \beta ^2-\frac{10}{27}\leq 0$. 
Thus, the lemma is proved.
 \end{proof}

 \begin{lemma}\label{t:pre2}
 If $\frac{2}{5}<x<1$, then $2 x^2+\frac{4}{3 x}-\frac{100}{27}<0$. 
 \end{lemma}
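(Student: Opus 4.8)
The plan is to mirror the strategy already used in the proof of Lemma~\ref{t:pre1}: regard the left-hand side as a single smooth function of $x$ on the relevant interval and locate its maximum by elementary calculus. Concretely, I would set $g(x) := 2x^2 + \frac{4}{3x} - \frac{100}{27}$ and aim to show $g(x) < 0$ throughout $x \in \bigl(\tfrac{2}{5}, 1\bigr)$.

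First I would differentiate, obtaining $g'(x) = 4x - \frac{4}{3x^2}$, and solve $g'(x) = 0$, which yields the unique positive critical point $x_{\text{crit}} = 3^{-1/3}$. Since $g'(x) < 0$ for $x < x_{\text{crit}}$ and $g'(x) > 0$ for $x > x_{\text{crit}}$, the function $g$ is strictly decreasing on $\bigl(\tfrac{2}{5}, x_{\text{crit}}\bigr)$ and strictly increasing on $(x_{\text{crit}}, 1)$. As $x_{\text{crit}} = 3^{-1/3} \approx 0.693$ lies strictly inside $\bigl(\tfrac{2}{5}, 1\bigr)$, the point $x_{\text{crit}}$ is the location of the minimum, and hence the maximum of $g$ over the closed interval $\bigl[\tfrac{2}{5}, 1\bigr]$ must be attained at one of the two endpoints.

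It then remains only to evaluate $g$ at the endpoints. A direct computation over the common denominator $675$ gives $g\bigl(\tfrac{2}{5}\bigr) = -\tfrac{34}{675} < 0$, and a computation over denominator $27$ gives $g(1) = -\tfrac{10}{27} < 0$. Since the maximum of $g$ on $\bigl[\tfrac{2}{5}, 1\bigr]$ equals the larger of these two strictly negative numbers, we conclude $g(x) < 0$ for every $x$ in the interval, which is precisely the assertion.

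The argument is entirely routine and presents no genuine obstacle. The only points demanding care are confirming that the critical point $3^{-1/3}$ indeed lies strictly between $\tfrac{2}{5}$ and $1$ (so that the decreasing-then-increasing structure forces the maximum to the endpoints), and performing the endpoint arithmetic accurately: both endpoint values are small negative rationals, so a sign or denominator slip there would be fatal to the conclusion.
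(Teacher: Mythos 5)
Your proposal is correct and follows essentially the same route as the paper: identify the unique critical point $x_{\text{crit}} = 3^{-1/3} \in \bigl(\tfrac{2}{5},1\bigr)$ as a minimum, conclude the maximum is attained at the endpoints, and verify $g\bigl(\tfrac{2}{5}\bigr) = -\tfrac{34}{675} < 0$ and $g(1) = -\tfrac{10}{27} < 0$. All computations check out, so nothing further is needed.
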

 \begin{proof}
 	 The proof follows from a similar strategy to that of Lemma~\ref{t:pre1}.  Let $f(x):=2 x^2+\frac{4}{3 x}-\frac{100}{27}$,  and thus its  minimum point is $x_\text{crit}=(\frac{1}{3})^{\frac{1}{3}}\in(\frac{2}{5},1)$.  the maximum of $f(x)$ on the interval $(\frac{2}{5}, 1)$ must occur at the endpoints:  $f(\frac{2}{5})=-\frac{34}{675}<0 $ and $f(1)=-\frac{10}{27}<0$. Thus we complete the proof. 
 \end{proof}

\section*{Acknowledgements}

C.L. is partially supported by NSFC (Grant No. $12571234$) and Fundamental Research Support Program of
HUST (Grant No. $2025$BRSXA$001$). J.W. is partially supported by NSFC (Grant No. $12271450$).

\bigskip

\textbf{Data Availability} Data sharing is not applicable to this article as no datasets were
generated or analysed during the current study.

\bigskip

\textbf{Declarations}

\bigskip

\textbf{Conflict of interest} The authors declare that they have no conflict of interest.

	\bibliographystyle{amsplain}
	\bibliography{ref}

\end{document}